\pdfoutput=1
\documentclass[a4paper,12pt]{amsart}

\usepackage{amsmath,amssymb,amsxtra,amscd,mathrsfs}
\usepackage[margin=3cm]{geometry}
\usepackage{color}  
\usepackage{tikz}
\usepackage{hyperref} 
\usepackage[all]{xy}

\numberwithin{equation}{section}
\allowdisplaybreaks[3]

\tolerance=1400

\newtheorem{theorem}{Theorem}[section]
\newtheorem{lemma}[theorem]{Lemma} 
\newtheorem{proposition}[theorem]{Proposition} 
\newtheorem{corollary}[theorem]{Corollary} 
\theoremstyle{definition}
 
\newtheorem{notation}[theorem]{Notation} 
\newtheorem{remark}[theorem]{Remark} 
\newtheorem{example}[theorem]{Example}

\newcommand{\C}{\mathbb{C}} 

\newcommand{\Z}{\mathbb{Z}} 
\newcommand{\Q}{\mathbb{Q}} 
\newcommand{\R}{\mathbb{R}}
\newcommand{\PP}{\mathbb{P}} 
\newcommand{\bS}{\mathbb{S}}

\newcommand{\hJ}{\widehat{J}} 
\newcommand{\htau}{\hat{\tau}}
\newcommand{\hP}{\widehat{P}}
\newcommand{\hlambda}{\hat{\lambda}}
\newcommand{\hGamma}{\widehat{\Gamma}} 

\newcommand{\bt}{\mathbf{t}} 
\newcommand{\bc}{\mathbf{c}} 
\newcommand{\bs}{\mathbf{s}} 
\newcommand{\btau}{\boldsymbol{\tau}}

\newcommand{\sfH}{\mathsf{H}}

\newcommand{\te}{\tilde{e}} 
\newcommand{\tDelta}{\widetilde{\Delta}} 
\newcommand{\tI}{\widetilde{I}} 
\newcommand{\tJ}{\widetilde{J}} 
\newcommand{\tF}{\widetilde{F}} 

\newcommand{\tsigma}{\tilde{\sigma}} 
\newcommand{\tupsilon}{\tilde{\upsilon}} 

\newcommand{\frakm}{\mathfrak{m}} 

\newcommand{\cO}{\mathcal{O}}
\newcommand{\cS}{\mathcal{S}} 
\newcommand{\cF}{\mathcal{F}} 
\newcommand{\cH}{\mathcal{H}} 
 
\newcommand{\cL}{\mathcal{L}} 
\newcommand{\tcL}{\widetilde{\cL}} 

\newcommand{\cQ}{\mathcal{Q}}

\newcommand{\scrF}{\mathscr{F}}

\newcommand{\iu}{\sqrt{-1}} 

\newcommand{\rank}{\operatorname{rank}} 
\newcommand{\Ker}{\operatorname{Ker}} 
\newcommand{\ev}{\operatorname{ev}} 
\newcommand{\pt}{\operatorname{pt}} 
 
\newcommand{\Spf}{\operatorname{Spf}} 
\newcommand{\End}{\operatorname{End}} 
\newcommand{\Res}{\operatorname{Res}} 
\newcommand{\ch}{\operatorname{ch}} 
\newcommand{\Map}{\operatorname{Map}} 
\newcommand{\Eff}{\operatorname{Eff}} 

\newcommand{\QDM}{\operatorname{QDM}} 
\newcommand{\FT}{\operatorname{FT}} 
\newcommand{\id}{\operatorname{id}} 
\newcommand{\Fl}{\operatorname{Fl}}

\newcommand{\Hom}{\operatorname{Hom}}

\def\corr#1{\left\langle#1 \right\rangle} 
\def\parfrac#1#2{\frac{\partial #1}{\partial #2}} 

\begin{document} 

\title{Quantum cohomology of projective bundles} 
\author{Hiroshi Iritani}
\email{iritani@math.kyoto-u.ac.jp} 
\address{Department of Mathematics, Graduate School of Science, 
Kyoto University, Kitashirakawa-Oiwake-cho, Sakyo-ku, 
Kyoto, 606-8502, Japan}

\author{Yuki Koto} 
\email{ykoto@gate.sinica.edu.tw} 
\address{Institute of Mathematics, Academia Sinica, 
Astronomy-Mathematics Building, No.1, Sec.4, 
Roosevelt Road, Taipei 10617, Taiwan.} 

\subjclass[2020]{Primary~14N35, Secondary~53D45}

\begin{abstract} 
We construct an $I$-function of the projective bundle $\PP(V)$ associated with a not necessarily split vector bundle $V\to B$ as a Fourier transform of the $S^1$-equivariant $J$-function of the total space of $V$ and show that it lies on the Givental Lagrangian cone of $\PP(V)$. 
Using this result, we show that the quantum cohomology $D$-module of $\PP(V)$ splits into a direct sum of the quantum cohomology $D$-modules of the base space $B$. This has applications to the semisimplicity of big quantum cohomology. 
\end{abstract} 

\maketitle 

\section{Introduction} 


For a rank-$r$ vector bundle $V \to B$ over a smooth projective variety $B$, we can construct the projective bundle $\pi\colon \PP(V) \to B$ consisting of lines in the fibres of $V\to B$. The ordinary cohomology group $H^*(\PP(V))$ of $\PP(V)$ is generated by the relative hyperplane class $p := c_1(\cO_{\PP(V)}(1))$ as an $H^*(B)$-algebra as follows: 
\[
H^*(\PP(V)) \cong  H^*(B)[p]/(p^r + c_1(V)p^{r-1} + \cdots + c_r(V) ).  
\]
This isomorphism is a consequence of the Leray-Hirsch theorem that says that $H^*(\PP(V))$ is a free $H^*(B)$-module generated by $1,p,\dots,p^{r-1}$. 

Many researchers have pursued a quantum cohomology counterpart to the Leray-Hirsch theorem for projective bundles or even broader fibre bundles, see e.g.~\cite{Qin-Ruan:projective, Costa--Miro-Roig:projective, Ancona-Maggesi:bundles, Elezi:projective_bundle, Leung-Li:functorial, Brown:toric_fibration, Strangeway, Lee-Lin-Wang:flopII, Lee-Lin-Qu-Wang:flopIII, Gonzalez-Woodward:tmmp, Fan:quantum_splitting, Fan-Lee:projective_bundle, Fan:projective_bundle}. 
If we count only `vertical' curves contracted by the projection $\PP(V) \to B$, we can easily observe that the corresponding `small vertical' quantum cohomology of $\PP(V)$ is given by 
\[
H^*(B)[p,q]/(p^r + c_1(V) p^{r-1} + \cdots + c_r(V) - q) 
\]
where $q$ is the Novikov variable associated with vertical lines. 
For $q\neq 0$, this ring decomposes into a direct sum of $r$ copies of $H^*(B)$. This decomposition must persist under the full quantum deformation, parametrized by the full Novikov variables of $\PP(V)$ and the bulk-deformation parameter $\htau \in H^*(\PP(V))$. The question then arises as to whether each summand is isomorphic to the quantum cohomology of $B$.  
This paper aims to affirm this correspondence, i.e.~$QH^*(\PP(V))$ decomposes into a direct sum of $r$ copies of $QH^*(B)$. Furthermore, we show that the decomposition can be lifted to quantum cohomology $D$-modules.  

\subsection{Mirror theorem for non-split projective bundles} 
When the vector bundle $V \to B$ is \emph{split}, i.e.~a direct sum of line bundles, the Elezi-Brown mirror theorem \cite{Elezi:projective_bundle, Brown:toric_fibration} enables us to determine the quantum cohomology of $\PP(V)$. 
This theorem provides an explicit hypergeometric series, called the \emph{$I$-function}, that lies in the Givental cone of $\PP(V)$. 
Recall that the Givental cone of a smooth projective variety $X$ is a Lagrangian submanifold in the infinite dimensional symplectic vector space $\cH_X = H^*(X)(\!(z^{-1})\!)[\![Q]\!]$. It encodes all genus-zero Gromov-Witten invariants of $X$. 
The Elezi-Brown $I$-function for $\PP(V)$ reconstructs the Givenal cone and hence determines all the genus-zero Gromov-Witten invariants of $\PP(V)$. 

Our first main result is a generalization of the Elezi-Brown mirror theorem to the non-split case. It is stated in terms of the $S^1$-equivariant Gromov-Witten invariants of the vector bundle $V \to B$. 
\begin{theorem}[Theorem \ref{thm:mirrorthm}] 
\label{thm:mirrorthm_introd} 
Let $B$ be a smooth projective variety and let $V \to B$ be a vector bundle of rank $r\ge 2$. We assume that the dual bundle $V^\vee$ is generated by global sections. Consider the $S^1$-action on $V$ scaling fibres and let $J_V^\lambda(\tau)$ denote the $S^1$-equivariant $J$-function of the total space of $V$, where  $\lambda$ is the $S^1$-equivariant parameter. Define the $H^*(\PP(V))$-valued function $I_{\PP(V)}(\tau,t)$ by  
\[
I_{\PP(V)}(\tau,t) = \sum_{k=0}^\infty 
\frac{e^{pt/z} q^k e^{kt} }{\prod_{c=1}^k \prod_{\substack{\text{\rm $\delta$: Chern roots} \\ \text{\rm of $V$}}} (p+ \delta + cz)} J_V^{p+kz}(\tau).    
\]
Then $z I_{\PP(V)}(\tau,t)|_{z\to -z}$ lies in the Givental cone of $\PP(V)$. Here $q$ denotes the Novikov variable corresponding to the class of a line in a fibre and $p$ is the first Chern class of the relative $\cO(1)$ over $\PP(V)$. 
\end{theorem}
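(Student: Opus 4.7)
My approach is to identify $I_{\PP(V)}(\tau,t)$ with a (discrete) Fourier transform of the $S^1$-equivariant $J$-function $J_V^\lambda(\tau)$ in the equivariant parameter $\lambda$, and then show that this transform maps the equivariant Givental cone of the total space of $V$ into the Givental cone $\cL_{\PP(V)}$. First I would make the Fourier-transform interpretation precise: the factor $J_V^{p+kz}(\tau)$ is the specialization of $J_V^\lambda$ at $\lambda=p+kz$, and the denominator $\prod_{c=1}^k\prod_\delta(p+\delta+cz)$ equals $\prod_{c=1}^k e_{S^1}(V)|_{\lambda=p+cz}$, so that $I_{\PP(V)}(\tau,t)$ is the sum over $k\ge 0$ of the specializations of $e^{\lambda t/z}J_V^\lambda(\tau)$ at $\lambda=p+kz$, normalized by these equivariant Euler-class factors.

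Next I would reduce the claim to a known mirror theorem. The assumption that $V^\vee$ is globally generated provides a surjection $\cO_B^{\oplus N}\twoheadrightarrow V^\vee$ and hence an embedding $V\hookrightarrow\cO_B^{\oplus N}$, under which $\PP(V)$ sits inside $\PP^{N-1}\times B$ as the vanishing locus of a regular section of $\pi_B^*Q\otimes\cO_{\PP^{N-1}}(1)$, where $Q=\cO_B^{\oplus N}/V$. Applying the Coates-Givental quantum Lefschetz principle to this complete intersection, starting from the known $I$-function of $\PP^{N-1}\times B$ (which factors as $I_{\PP^{N-1}}\cdot J_B$), produces an explicit hypergeometric series lying on $\cL_{\PP(V)}$.

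The third step is to rewrite this hypergeometric series in the form claimed by the theorem. I would carry out a partial-fraction expansion in the $\PP^{N-1}$ variable and reorganize the sum into contributions at the discrete locations $\lambda=p+kz$; a second application of quantum Lefschetz, now to the zero-section inclusion $B\hookrightarrow V$, identifies the residual factor attached to each $k$ as $J_V^{p+kz}(\tau)$. The contributions from the auxiliary $\cO_B^{\oplus(N-r)}$ directions cancel against the excess Euler-class factors, leaving exactly the stated series.

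The main obstacle will be verifying that the final expression is genuinely \emph{intrinsic} to $V$, i.e.\ independent of the auxiliary presentation $\cO_B^{\oplus N}\twoheadrightarrow V^\vee$. The excess factors from the extra $N-r$ trivial summands must match the combinatorial discrepancy between the equivariant Euler classes of $\cO_B^{\oplus N}$ and of $V$; this compatibility amounts to a functoriality statement for the Fourier-transform construction under short exact sequences of vector bundles, and it is precisely here that the global generation of $V^\vee$ is used, to guarantee convergence of the hypergeometric series and polynomial dependence on $\lambda$ of the relevant twisted $J$-function of $B$.
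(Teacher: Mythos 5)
Your outline starts from the same place as the paper (embedding $V\hookrightarrow\cO_B^{\oplus N}$, realizing $\PP(V)\subset B\times\PP^{N-1}$ as the zero locus of a section of $\cQ(1)$, and applying quantum Riemann--Roch / quantum Lefschetz), but there is a genuine gap in the middle of the argument where you pass from $J_B$ to $J_V^\lambda$.

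Specifically, the third step --- ``carry out a partial-fraction expansion in the $\PP^{N-1}$ variable \ldots a second application of quantum Lefschetz, now to the zero-section inclusion $B\hookrightarrow V$, identifies the residual factor attached to each $k$ as $J_V^{p+kz}(\tau)$'' --- does not work as described. Quantum Lefschetz in the form of an explicit hypergeometric modification of $J$-functions requires the twisting bundle to be split. For non-split $V$, the $(V,e_\lambda^{-1})$-twisted $J$-function $J_V^\lambda$ is \emph{not} obtained from $J_B$ by a hypergeometric factor; the only relation between them is the QRR statement $\cL_{B,(V,e_\lambda^{-1})} = \Delta_{(V,e_\lambda^{-1})}\cL_B$, which merely says that $(\Delta_V^\lambda)^{-1}zJ_V^\lambda$ lies \emph{somewhere} on $\cL_B$, not that it equals $zJ_B$. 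The quantity $J_V^{p+kz}$ therefore cannot be "read off'' from a hypergeometric series built out of $J_B$ by reorganizing a partial-fraction sum. What the paper actually does here is prove a separate lemma (Lemma~\ref{lem:miniversal}) showing that any miniversal slice of the cone can be reached from the $J$-function by an explicit family $e^{F/z}$ of differential operators in the $\tau$-variables; applying this to the slice $\Delta_\cQ^\lambda zJ_V^\lambda\in\cL_B$ produces an operator $F(\lambda)$ with $\Delta_\cQ^\lambda zJ_V^\lambda = e^{F(\lambda)/z}zJ_B$. One then inserts $e^{F(\lambda+z\partial_t)/z}$ into the $\PP^{N-1}\times B$ computation, uses $\Delta_V^\lambda\Delta_\cQ^\lambda=\Delta_{\cO^{\oplus N}}^\lambda$, and the difference equation $\Delta_\cQ^{\lambda+kz}/\Delta_\cQ^\lambda = \prod_{c=1}^k\prod_\epsilon(\lambda+\epsilon+cz)$. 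That miniversality argument is the nontrivial technical ingredient you would need to supply; without it, the appearance of $J_V^{p+kz}$ in the final formula is unjustified.

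A secondary issue is your framing of the role of global generation of $V^\vee$: you describe it as ensuring convergence and polynomial $\lambda$-dependence as a side remark, but the polynomiality of $J_V^\lambda$ in $\lambda$ (coming from semi-projectivity of $V$, Lemma~\ref{lem:semiprojective}, and properness of the evaluation maps) is actually what makes the specialization $\lambda\mapsto p+kz$ in the statement of the theorem well-defined at all, and it also underlies the $\lambda^{-1}$-adic bookkeeping needed to make the QRR operator (which involves $\log\lambda$ and both positive and negative powers of $z$) rigorous, as spelled out in \S\ref{subsec:rigorous} via the modified Euler class $\te_\lambda$ and the change of Novikov variables $Q^d\to Q^d\lambda^{\pm c_1\cdot d}$.
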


The function $I_{\PP(V)}$ in this theorem coincides with the Elezi-Brown $I$-function when $V$ is a direct sum of line bundles (see Example \ref{exa:split_case}). 
It arises as the Fourier transform of the equivariant $J$-function $J_V^\lambda$ with respect to the equivariant parameter $\lambda$. We shall elaborate on this point in \S\ref{subsec:introd_Fourier} below. 


\begin{remark} 
By tensoring $V$ with a sufficiently negative line bundle, we can always assume that $V^\vee$ is generated by global sections, without changing $\PP(V)$. This assumption ensures that $J_V^\lambda$ does not contain negative powers of $\lambda$ and that the substitution $\lambda = p +kz$ in the theorem is well-defined (see \S\ref{subsec:vector_bundle}). 
The conclusion of Theorem \ref{thm:mirrorthm_introd} holds true even if we replace the $J$-function $J_V^\lambda$ with any slices of the Givental cone of $V$ depending polynomially on $\lambda$; see Theorem \ref{thm:mirrorthm} for the details. 
\end{remark} 

\begin{remark} The $J$-function $J_V^\lambda$ is defined over the Novikov ring of $V$. In Theorem \ref{thm:mirrorthm_introd}, we use the splitting of $\pi_* \colon H_2(\PP(V),\Z) \twoheadrightarrow H_2(B,\Z)=H_2(V,\Z)$ given by the class $p=c_1(\cO(1))$ to identify Novikov variables for $V$ with those for $\PP(V)$, see Notation \ref{nota:splitting}. 
\end{remark} 


\begin{remark} 
The quantum Riemann-Roch theorem of Coates-Givental \cite{Coates-Givental} determines the $S^1$-equivariant $J$-function $J_V^\lambda$ from the genus-zero Gromov-Witten invariants of the base $B$ and the Chern classes $c_i(V)$. Therefore Theorem \ref{thm:mirrorthm_introd} determines all genus-zero Gromov-Witten invariants of $\PP(V)$ from those of the base $B$. This recovers the genus-zero part of the results of Fan-Lee \cite{Fan-Lee:projective_bundle} and Fan \cite{Fan:projective_bundle}, who showed that all genus\footnote{The method in this paper is not applicable to higher-genus reconstruction, as it relies on the quantum Lefschetz principle \cite{KKP:functoriality} for convex bundles.} 
Gromov-Witten invariants of $\PP(V)$ can be reconstructed from those of the base $B$ and the Chern classes $c_i(V)$. 
On the other hand, in contrast to the split case, it is not easy to find a closed formula for $J_V^\lambda$ in general. We give a non-split example, the cotangent bundle of $\PP^n$, in Example \ref{exa:cotangent_Pn}. 
\end{remark} 

\begin{remark}
\label{rem:calculation_GW_of_PV}
Several techniques exist for computing the Gromov-Witten invariants of non-split projective bundles. Strangeway \cite{Strangeway} provided a reconstruction algorithm for certain projective bundles over $\PP^n$. The quantum splitting principle of Lee-Lin-Qu-Wang \cite{Lee-Lin-Qu-Wang:flopIII} proposes a reduction to the split case by blowing up the base. Fan \cite[Theorem 0.1, Question 4]{Fan:quantum_splitting} explored a relationship between the $(V,e_\lambda^{-1})$-twisted Gromov-Witten invariants of $B$ and the $(\cO(-1),e_\lambda^{-1})$-twisted (or untwisted) invariants of $\mathbb{P}(V)$, under the opposite assumption that $V$ is globally generated. Recently, Hinault-Yu-Zhang-Zhang \cite{HYZZ:framing} (see also earlier announcements \cite{Kontsevich:Miami2020, Kontsevich:Simons2021} by Katzarkov-Kontsevich-Pantev-Yu) proposed a reconstruction method based on the decomposition theorem (Theorem \ref{thm:decomp_QDM_introd} below). The initial condition required for their reconstruction and the explicit algorithm based on Birkhoff factorization are provided in \S \ref{subsec:initial_condition}. 
\end{remark}

\begin{remark} 
In the subsequent paper \cite{Koto:non-split_toric}, the second-named author generalized Theorem \ref{thm:mirrorthm_introd} to toric bundles associated with non-split vector bundles, combining the techniques in this paper and virtual localization for an additional torus action. 
\end{remark} 

\subsection{Decomposition of quantum $D$-modules} 

An important property of quantum cohomology is that it forms a $D$-module over the parameter space. Roughly speaking, the \emph{quantum $D$-module} $\QDM(X)$ of a smooth projective variety $X$ is the module $H^*(X)[z][\![Q,\tau]\!]$ equipped with the flat covariant derivatives $\nabla_{\tau^i}$, $\nabla_{z\partial_z}$, $\nabla_{\xi Q\partial_Q}$ with respect to the parameters $\tau$, $z$, $Q$ (see \S\ref{subsec:qconn_fundsol}). Here $\tau \in H^*(X)$ represents the bulk parameter and $Q$ denotes the Novikov variable. These covariant derivatives, called the quantum connection, are defined in terms of the quantum multiplication. The quantum $D$-module is further equipped with a $z$-sesquilinear pairing $P$, induced from the Poincar\'e pairing and compatible with the quantum connection. The $I$- or $J$-functions appearing in the previous section are cohomology-valued solutions of the quantum $D$-module. 

The Mirror Theorem \ref{thm:mirrorthm_introd} establishes that the quantum $D$-module $\QDM(\PP(V))$ is the Fourier transform of the $S^1$-equivariant quantum $D$-module $\QDM_{S^1}(V)$ (see Theorem \ref{thm:Fourier_QDM_introd} below). By combining this result with the quantum Riemann-Roch (QRR) theorem of Coates-Givental \cite{Coates-Givental}, which connects the Gromov-Witten invariants of $V$ and $B$, we prove that $\QDM(\PP(V))$ decomposes into a direct sum of $r$ copies of $\QDM(B)$ after an appropriate localization. 

To compare the quantum $D$-modules of $\PP(V)$ and $B$, we embed the Novikov ring of $B$ into a localization of the Novikov ring of $\PP(V)$. Let $(q,Q)$ denote the Novikov variable of $\PP(V)$, where $q$ corresponds to the class of a vertical line and $Q$ is the Novikov variable of the base $B$, identified via the splitting in Notation \ref{nota:splitting}. Introducing an additional copy $Q_B$ of the Novikov variable for $B$, we define the following embedding\footnote{See Remark \ref{rem:splitting_intrinsic_meaning} for the geometric interpretation of this embedding.} of Novikov rings: 
\begin{equation}
\label{eq:embedding_Novikov}
\C[\![Q_B]\!] \hookrightarrow \C(\!(q^{-1/r'})\!)[\![Q]\!] \qquad Q_B^d \mapsto q^{-c_1(V)\cdot d/r} Q^d  
\end{equation} 
where $r'$ is $r$ or $2r$ depending on the parity of $r$ (see \eqref{eq:r'}). 
Let $\QDM(\PP(V))_{\rm loc}$ denote the base change of $\QDM(\PP(V))$ to the localized base $\C(\!(q^{-1/r'})\!)[\![Q]\!]$, and let $\QDM(B)_{\rm ext, loc}$ denote the base change of $\QDM(B)$ via the embedding \eqref{eq:embedding_Novikov}.   



\begin{figure}[t]  
\centering 
\begin{tikzpicture}[x=1.2pt, y=1.2pt] 
\draw[very thick] (0,50) .. controls (80,60) and (120,60)  .. (200,50);
\draw[->, very thick] (70,56.5) -- (71,56.5); 
\draw[->,very thick] (30,30) .. controls (30,70) and (25,80) .. (20,100); 
\draw[->, very thick] (170,30) .. controls (170,65) and (175,80) .. (180,100); 
\draw[->,very thick] (140,56) -- (139,56); 

\draw (20,108) node {$Q$};
\draw (180,108) node {$Q_B=q^{-c_1(V)/r} Q$}; 
\draw (70,45) node {$q$}; 
\draw (140,45) node {$q^{-1/r'}$};

\filldraw[opacity=0.1, blue, rotate=3] (55,53) ellipse [x radius =80, y radius =30];
\draw (65,75) node {$\QDM(\PP(V))$}; 

\filldraw[opacity=0.4, rotate=-5] (165,70) ellipse [x radius =10,y radius=20];
\draw (213,68) node {$\displaystyle \QDM(B)_{\rm ext}^{\boxplus r}$};
\end{tikzpicture} 
\caption{Decomposition pictured over the global K\"ahler moduli space: $\QDM(\PP(V))$ splits into a direct sum of $\QDM(B)$ near the infinity divisor $(q^{-1/r'}=0)$. }
\label{fig:decomposition} 
\end{figure} 

\begin{theorem}[Theorem \ref{thm:decomp_QDM}] 
\label{thm:decomp_QDM_introd} 
Let $V\to B$ be as in Theorem $\ref{thm:mirrorthm_introd}$. We have a formal invertible map $\varsigma=\bigoplus_{j=0}^{r-1} \varsigma_j \colon H^*(\PP(V)) \to \bigoplus_{j=0}^{r-1} H^*(B)$ over $\C(\!(q^{-1/r})\!)[\![Q]\!]$ and a decomposition of the quantum $D$-modules over $\C[z](\!(q^{-1/r'})\!)[\![Q]\!]$: 
\[
\Phi\colon 
\QDM(\PP(V))_{\rm loc} \cong \bigoplus_{i=0}^{r-1} \varsigma_i^* \QDM(B)_{\rm ext, loc}.  
\]
\end{theorem} 

The pairs $(q,Q)$ and $(q^{-1/r'}, Q_B)$ of Novikov variables, related by $Q_B = q^{-c_1(V)/r} Q$, may be viewed as two coordinate charts on the global K\"ahler moduli space illustrated in Figure \ref{fig:decomposition}. The isomorphism $\Phi$ glues the quantum $D$-modules $\QDM(\PP(V))$ and $\QDM(B)_{\rm ext}^{\boxplus r} = \QDM(B)[\![q^{-1/r'}]\!]^{\boxplus r}$ over the intersection of their domains of definition --- the localized base $\Spf\C(\!(q^{-1/r'})\!)[\![Q]\!]$ --- to form a global $D$-module.

\begin{corollary}[see \S\ref{subsec:semisimplicity} for the proof] 
\label{cor:decomp_qcoh} The derivative of the map $\varsigma$ induces an isomorphism of the quantum cohomology rings $(H^*(\PP(V)), \star_{\htau}) \cong \bigoplus_{i=0}^{r-1} (H^*(B), \star_{\varsigma_i(\htau)})$ over the localized base $\C(\!(q^{-1/r})\!)[\![Q]\!]$. In particular, the quantum cohomology of $\PP(V)$ is generically semisimple if and only if the same holds for the base $B$.  
\end{corollary}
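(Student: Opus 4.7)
The plan is to extract the ring isomorphism from the $D$-module isomorphism $\Phi$ of Theorem \ref{thm:decomp_QDM_introd} by comparing the leading (pole-in-$z$) parts of the quantum connections on both sides. Recall that, for any smooth projective $X$, the quantum product $\star_\tau$ at a bulk parameter $\tau$ is recovered from the quantum connection $\nabla^X_{\tau^i}=\partial_{\tau^i}+z^{-1}(\tau^i\star_\tau)$ as the coefficient of $z^{-1}$.

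First I would use that $\Phi$, being a $D$-module isomorphism, intertwines $\nabla^{\PP(V)}_{\tau^i}$ on the source with $\bigoplus_j(\varsigma_j^*\nabla^B)_{\tau^i}$ on the target. Expanding $\Phi=\Phi_0+z\Phi_1+\cdots$ and using that the connection coefficient of $\varsigma_j^*\nabla^B$ in the direction $\partial_{\tau^i}$ equals $\partial_{\tau^i}\varsigma_j(\htau)\star_{\varsigma_j(\htau)}$ by the chain rule, the $z^{-1}$-part of the intertwining relation gives
\[
\Phi_0(\htau)\circ(\tau^i\star_\htau)=\Bigl(\bigoplus_{j=0}^{r-1}\partial_{\tau^i}\varsigma_j(\htau)\star_{\varsigma_j(\htau)}\Bigr)\circ\Phi_0(\htau).
\]
Evaluating at $1\in H^*(\PP(V))$ and normalizing $\Phi_0(\htau)(1)$ to be the unit element of the product ring identifies $\Phi_0(\htau)$ with the tangent map $d\varsigma_\htau$; the displayed relation then says that $d\varsigma_\htau$ intertwines the quantum products. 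Since the source and target are free modules of equal rank $r\cdot\dim H^*(B)=\dim H^*(\PP(V))$ and $\Phi$ is an isomorphism, $d\varsigma_\htau$ is a ring isomorphism, proving the first claim.

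For the semisimplicity statement, a finite direct sum of finite-dimensional commutative algebras is generically semisimple if and only if each summand is. Since $d\varsigma_\htau$ is a linear isomorphism at every point, $\varsigma$ is a local diffeomorphism between the relevant parameter spaces; hence, for generic $\htau$ each component $\varsigma_j(\htau)$ varies over an open subset of $H^*(B)$, and conversely, any open subset of $\bigoplus_j H^*(B)$ meets the image of $\varsigma$. Consequently, generic semisimplicity of $QH^*(B)$ is equivalent to that of $QH^*(\PP(V))$.

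The main obstacle I anticipate is a bookkeeping issue: one must verify that $\Phi$, which a priori lives over the localized ring $\C[z](\!(q^{-1/r'})\!)[\![Q]\!]$, has a well-defined leading-order map $\Phi_0$ and that it coincides---after fixing the standard unit normalization---with the tangent map of $\varsigma$. This requires making precise the pullback connection $\varsigma_j^*\nabla^B$ via the chain rule and checking that the degree-zero part of $\Phi$ respects fundamental classes; once these are in place, both claims follow essentially formally from the $D$-module decomposition.
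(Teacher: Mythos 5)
Your proposal takes essentially the same route as the paper: extract the quantum-product intertwining from the $D$-module isomorphism $\Phi$ by reading off the $z^{-1}$-coefficient of the connection relation (equivalently, setting $z=0$ in the identity $(\phi_ip^k)\star_\htau=\Phi^{-1}\bigl(\bigoplus_j(\partial_{\htau^{i,k}}\varsigma_j)\star_{\varsigma_j(\htau)}\bigr)\Phi+z\Phi^{-1}\partial_{\htau^{i,k}}\Phi$), and then deduce the semisimplicity equivalence from the resulting ring isomorphism $d\varsigma$ together with the invertibility of the Jacobian. The only cosmetic difference is that you make explicit the passage from the conjugation relation to $d\varsigma$ being a ring map via the unit normalization $\Phi_0=d\varsigma(\cdot)\cdot\Phi_0(1)$, a step the paper leaves implicit; this is correct and fills in a small gap rather than changing the argument.
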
 

We can use Corollary \ref{cor:decomp_qcoh} to deduce the generic semisimplicity of quantum cohomology for certain Grassmannians $G(k,n)$, $IG(k,2n)$ (see Proposition \ref{prop:semisimplicity}). The semisimplicity result seems new for $IG(k,2n)$ with $k\ge 3$ (see \cite{Perrin:semisimple,GMS:IG26,CMPS:IG} for  $k=2$). 

\begin{remark} 
We do not discuss the convergence of quantum cohomology. In this paper, ``generic semisimplicity'' means formal semisimplicity in the sense of \cite{Coates-Iritani:convergence}. 
\end{remark} 

The proofs of Theorems \ref{thm:mirrorthm_introd} and \ref{thm:decomp_QDM_introd} both rely on the QRR theorem \cite{Coates-Givental}. We embed $V$ to the trivial bundle $B\times \C^N$. This induces an embedding $\PP(V) \subset B\times \PP^{N-1}$, which exhibits $\PP(V)$ as the zero-locus of a section of a convex vector bundle $\cQ(1)$ over $B\times \PP^{N-1}$. Theorem \ref{thm:mirrorthm_introd} follows by applying the QRR theorem to the bundle $\cQ(1)$. 
The Decomposition Theorem \ref{thm:decomp_QDM_introd} is established by studying a (formal) stationary  phase approximation of a certain Fourier integral \eqref{eq:Fourier_J} of $J_V^\lambda$. The QRR theorem for the bundle $V\to B$ implies that this integral takes values in the Givental cone of $B$, thereby defining a morphism $\QDM(\PP(V)) \to \QDM(B)$ of $D$-modules.


\begin{remark} 
In the split case, a similar decomposition of the quantum $D$-module (more generally, for toric bundles) follows from Brown's theorem \cite{Brown:toric_fibration}: this has been discussed by Coates-Givental-Tseng \cite{CGT:Virasoro} and Koto \cite{Koto:convergence}. These previous works rely on the equivariant localization with respect to the $(\C^\times)^{r-1}$-action (which exists only in the split case) and our argument is substantively different from them.   
\end{remark} 

\begin{remark}[see \cite{Iritani:discrepant, Kontsevich:Miami2020, Iritani:ICM}]  
\label{rem:Orlov}
Theorem \ref{thm:decomp_QDM_introd} provides a decomposition of quantum $D$-modules at the formal level. Given that quantum cohomology is expected to have convergent structure constants, we anticipate that the mirror map $\varsigma_j$ and the isomorphism $\Phi$ in Theorem \ref{thm:decomp_QDM_introd} can be analytified with respect to the Novikov variables $q,Q$ and the bulk parameter $\htau \in H^*(\PP(V))$. However, we should not expect $\Phi$ to be analytic in $z$; rather, its matrix entries should belong to $\cO^{\rm an}[\![z]\!]$, where $\cO^{\rm an}$ denotes the sheaf of analytic functions in $q, Q, \htau$. This formal (in $z$) decomposition should lift to an analytic decomposition over an angular sector in the $z$-plane. Furthermore, we expect this analytic decomposition to relate --- at specific parameter values --- to the semi-orthogonal decomposition \cite{Orlov:projective} of the derived category of coherent sheaves:
\[
D^b(\PP(V)) =\langle D^b(B)_0,\dots,D^b(B)_{r-1} \rangle, \qquad D^b(B)_i = D^b(B)\otimes \cO(i)
\]
via the $\hGamma$-integral structure. 
\end{remark}

\subsection{Equivariant quantum cohomology and Fourier transformation}
\label{subsec:introd_Fourier} 
We explain the idea behind our construction of the $I$-function. Following the conjecture of Teleman \cite{Teleman:gauge_mirror} on quantum cohomology of symplectic reductions, we view the quantum $D$-module of $\PP(V)$ as a Fourier transform of the equivariant quantum $D$-module of $V$. We refer to \cite{Iritani:monoidal, Iritani-Sanda:reduction} for a more general conjecture along this line of thought. 

Let $(M,\omega)$ be a sympletic manifold equipped with a Hamiltonian $S^1$-action, and let $\mu \colon M\to \R$ be a moment map. We denote by $M/\!\!/_{t} S^1$ the symplectic reduction $\mu^{-1}(t)/S^1$ at the level $t\in \R$. The equivariant volume of $M$ can be written as a Fourier transform of the volume of $M/\!\!/_t S^1$: 
\[
\int_M e^{\omega - \lambda \mu} = \int_\R  e^{-\lambda t} dt \int_{M/\!\!/_t S^1} e^{\omega_{\rm red}} 
\]
where $\lambda$ is the $S^1$-equivariant parameter and $\omega_{\rm red}$ is the reduced symplectic form on $M/\!\!/_t S^1$. 
This formula has played a fundamental role in the works of Duistermaat-Heckman  \cite{Duistermaat-Heckman} and Jeffrey-Kirwan \cite{Jeffrey-Kirwan:localization}. We want to ask if a similar relation continues to hold in equivariant \emph{quantum} cohomology. Since the $J$-function is a quantum analogue of $e^{\omega}$, we could perhaps hope that the equivariant $J$-function of $M$ is a Fourier transform of the $J$-function of the reduction $M/\!\!/_t S^1$. 

We can make this hope more precise by using shift (or Seidel) operators in equivariant quantum cohomology. The $S^1$-equivariant quantum $D$-module $\QDM_{S^1}(M)$ of $M$ admits a shift operator $\bS \colon \QDM_{S^1}(M) \to \QDM_{S^1}(M)$ of the equivariant parameter $\lambda$ \cite{Okounkov-Pandharipande:Hilbert}. This lifts the Seidel operator \cite{Seidel:pi1} on quantum cohomology and satisfies $\bS( f(\lambda) \alpha) = f(\lambda-z) \bS(\alpha)$ for any $f(\lambda) \in \C[\lambda]$ and $\alpha \in \QDM_{S^1}(M)$.  We have the following commutation relation: 
\[
[\lambda, \bS] = z \bS.  
\]
Now we change the point of view and regard $\QDM_{S^1}(M)$ as a $\C[\bS]$-module rather than a $\C[\lambda]$-module; the equivariant parameter $\lambda$ then behaves like the differential operator $z \bS \parfrac{}{\bS}$. In other words, we consider the Fourier transform of the module $\QDM_{S^1}(M)$.  Teleman's conjecture suggests that the Fourier transform of $\QDM_{S^1}(M)$ corresponds to $\QDM(M/\!\!/_t S^1)$ in such a way that the $\lambda$-action gives the quantum connection in the direction of $\kappa(\lambda)$ and that the $\bS$-action gives the Novikov variable action dual to $\kappa(\lambda)$, where $\kappa \colon H^*_{S^1}(M) \to H^*(M/\!\!/_t S^1)$ denotes the Kirwan map. 

According to this picture, \emph{solutions} of $\QDM_{S^1}(M)$ and $\QDM(M/\!\!/_t S^1)$ should be related by a Fourier transformation. Let us restrict ourselves to the case where $M$ is the total space of $V\to B$. The equivariant $J$-function $J_V^\lambda$ gives a solution of $\QDM_{S^1}(V)$ which takes values in the (polynomial) Givental space $\cH_{V,\rm pol}= H^*_{S^1}(V)[z,z^{-1}][\![Q]\!]$. The shift action on the Givental space $\cH_{V,\rm pol}$ is given by the operator $\cS \colon \cH_{V,\rm pol} \to \cH_{V,\rm pol}$ 
\begin{equation} 
\label{eq:cS_introd}
\cS (f(\lambda)) = e_{\lambda}(V)  f(\lambda -z)  
\end{equation} 
where $e_{\lambda}(V)=\sum_{i=0}^r \lambda^i c_{r-i}(V)$ is the $S^1$-equivariant Euler class of $V$ (see \cite[Definition 3.13]{Iritani:shift}). We consider the following Fourier transformation from $\cH_{V,\rm pol}$ to $\cH_{\PP(V),\rm pol}$:  
\begin{equation} 
\label{eq:Fourier_introd}
J(\lambda) \longmapsto \hJ(q) = \sum_{k\in \Z}  \kappa( \cS^{-k} J) q^k 
\end{equation} 
where $\kappa\colon H^*_{S^1}(V) \to H^*(\PP(V))$ is the Kirwan map, mapping $\lambda$ to $p=c_1(\cO(1))$, and $\cH_{\PP(V),\rm pol} = H^*(\PP(V)) [z,z^{-1}][\![q,Q]\!]$ is the polynomial Givental space for $\PP(V)$. It is easy to see that this transformation intertwines $\lambda$ with $zq\partial_q+p$, and $\cS$ with $q$.    
\begin{align}
\label{eq:Fourier_property} 
(\lambda J)\sphat  =  \left(z q \partial_q + p \right) \hJ, \qquad 
(\cS J)\sphat  = q \hJ. 
\end{align} 
Hence the Fourier transform of $J_V^\lambda$ should give rise to a solution of $\QDM(\PP(V))$. It is calculated as 
\begin{align}
\label{eq:Fourier_J-function} 
(J_V^\lambda)\sphat & = \sum_{k\in \Z} \kappa \left( \frac{\prod_{c=-\infty}^0 e_{\lambda+cz}(V)}{\prod_{c=-\infty}^k e_{\lambda+cz}(V)} J_V^{\lambda+kz} \right) q^k  
=\sum_{k\ge 0} \frac{q^k}{\prod_{c=1}^k e_{p+cz}(V)} J_V^{p+kz}.  
\end{align} 
The summand with $k<0$ vanishes because of the relation $\kappa(e_{\lambda}(V)) = p^r + c_1(V) p^{r-1} + \cdots + c_r(V)= 0$ in $H^*(\PP(V))$. This gives the $I$-function $I_{\PP(V)}$ in Theorem \ref{thm:mirrorthm_introd} restricted to $t=0$. 

We can rephrase the mirror theorem (Theorems \ref{thm:mirrorthm_introd}, \ref{thm:mirrorthm}) as the Fourier duality between the quantum $D$-modules of $V$ and $\PP(V)$. In this paper, we treat $\QDM_{S^1}(V)$ as a $D$-module over the infinite-dimensional base $H^*_{S^1}(V)$ (see \S\ref{subsec:vector_bundle} and \S\ref{subsubsec:QDM_V} for further details). 

\begin{theorem}[Theorem \ref{thm:Fourier_QDM}, Proposition \ref{prop:Fourier_pairing}] 
\label{thm:Fourier_QDM_introd} 
Let $V\to B$ be as in Theorem $\ref{thm:mirrorthm_introd}$. There exist a mirror map $\htau \colon H^*_{S^1}(V) \to H^*(\PP(V))$, whose leading term is the Kirwan map, and an isomorphism $\FT \colon \QDM_{S^1}(V) \cong \htau^*\QDM(\PP(V))$ such that $\FT$ intertwines 
\begin{itemize} 
\item the shift operator $\bS$ with the Novikov variable $q$ and; 
\item the equivariant parameter $\lambda$ with the quantum connection $z(\htau^*\nabla)_{q\partial_q}$. 
\end{itemize} 
Moreover, the natural pairings on $\QDM_{S^1}(V)$ and $\QDM(\PP(V))$ can be related by a Fourier transformation. 
\end{theorem}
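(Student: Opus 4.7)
The plan is to construct $\FT$ as the extension of the Fourier transformation \eqref{eq:Fourier_introd} from cohomology-valued solutions to the full quantum $D$-module, with Theorem \ref{thm:mirrorthm_introd} as the key input guaranteeing that the image lies in $\htau^*\QDM(\PP(V))$ for an appropriate mirror map $\htau$. First I would extract $\htau$: since Theorem \ref{thm:mirrorthm_introd} places $zI_{\PP(V)}(\tau,t)|_{z\to-z}$ on the Givental cone $\cL_{\PP(V)}$, the standard Birkhoff factorization procedure yields a unique mirror map $\htau(\tau,t) \in H^*(\PP(V))[\![Q,q,\tau,t]\!]$ together with a gauge identification of $I_{\PP(V)}$ with the $J$-function $J_{\PP(V)}(\htau)$; reading off the $z^0$-coefficient of the explicit formula \eqref{eq:Fourier_J-function} gives $\htau = \kappa(\tau)+pt+O(Q,q)$, whose leading term is the Kirwan map as required. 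Next I would define $\FT$ by sending the canonical generator of $\QDM_{S^1}(V)$ coming from $J_V^\lambda(\tau)$ to the generator of $\QDM(\PP(V))$ coming from $J_{\PP(V)}(\htau)$ and extending $\C[z]$-linearly; the Mirror Theorem makes this an isomorphism onto $\htau^*\QDM(\PP(V))$.

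The intertwinings $\bS \leftrightarrow q$ and $\lambda \leftrightarrow z\nabla_{q\partial_q}$ then follow from the solution-level identities \eqref{eq:Fourier_property}. The action of $\bS$ on the Givental space is exactly the operator $\cS$ of \eqref{eq:cS_introd}, so $(\cS J)\sphat = q\hJ$ translates verbatim to $\bS \leftrightarrow q$. On the $\PP(V)$ side the divisor axiom gives $z\nabla_{q\partial_q} = zq\partial_q + p\star_\htau$, and under the $J$-function identification, classical multiplication by $p$ on $\hJ_V$ corresponds through the fundamental solution to quantum multiplication by $p\star_\htau$; hence $(\lambda J)\sphat = (zq\partial_q+p)\hJ$ transports to $\lambda \leftrightarrow z\nabla_{q\partial_q}$. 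The commutation relation $[\lambda,\bS]=z\bS$ is then automatically consistent with $[z\nabla_{q\partial_q},q]=zq$.

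For the pairing compatibility (Proposition \ref{prop:Fourier_pairing}) I would represent the $S^1$-equivariant Poincar\'e pairing on $V$ as a residue integral over $B$ involving $1/e_\lambda(V)$ via virtual localization along the zero section, and compute its Fourier transform: the discrete sum $\sum_k q^k \kappa(\cS^{-k}(\,\cdot\,))$ combined with residues at the zeros of $e_\lambda(V)$ reassembles the Poincar\'e pairing on $\PP(V)$. I expect this pairing step to be the main obstacle: the connection intertwinings are essentially forced by \eqref{eq:Fourier_property}, whereas matching the symplectic $z$-pairing on Givental space after the substitution $z\to-z$ requires a careful residue/contour argument compatible with the $z$-adic and $q$-adic completions appearing on both sides.
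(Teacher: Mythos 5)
Your overall plan matches the paper's: apply the Mirror Theorem to place the Fourier transform $\hJ_V(\tau) = \sum_k q^k\kappa(\cS^{-k}J_V)$ on the cone $\cL_{\PP(V)}$, extract the mirror map $\htau$ and the Birkhoff factor via $\hJ_V(\tau) = M_{\PP(V)}(\htau(\tau))\Upsilon(\tau)$, define $\FT$ on the basis $\phi_i\lambda^k$ by $\FT(\phi_i\lambda^k) = z(\htau^*\nabla)_{\tau^{i,k}}\Upsilon(\tau)$, read off the intertwining properties from \eqref{eq:Fourier_property} together with the differential equations satisfied by $M_V$ and $M_{\PP(V)}$, and verify the pairing compatibility via the Jeffrey--Kirwan--type residue identity $\pi_*(\kappa(f)) = \Res_{\lambda=0}\, f(\lambda)e_\lambda(V)^{-1}d\lambda$. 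These are precisely the steps the paper carries out.

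There is one genuine gap: you write that ``the Mirror Theorem makes this an isomorphism onto $\htau^*\QDM(\PP(V))$,'' but the Mirror Theorem alone does not deliver this. It guarantees that $z\hJ_V$ lies on the cone, hence that the Birkhoff factorization exists, but it says nothing yet about invertibility of the resulting $\C[z][\![Q,\btau]\!]$-linear map $\FT$. Both sides are infinite rank over $\C[z][\![Q,\btau]\!]$ (the source with basis $\{\phi_i\lambda^k\}_{k\ge 0}$, the target $H^*(\PP(V))\otimes\C[z,q][\![Q,\htau]\!]$ after pulling back by $\htau$), so one must explicitly check that the images $\FT(\phi_i\lambda^k)|_{Q=\tau=0}$ form a $\C[z]$-basis of $H^*(\PP(V))\otimes\C[z,q]$. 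The key calculation is that for $k=rn+m$, $0\le m\le r-1$, one has $\FT(\phi_i\lambda^k)|_{Q=\tau=0} = \phi_i p^m q^n + (\text{lower degree in }q) + O(z)$, obtained by iterating $z\nabla'_{q\partial_q}(\phi_i p^{r-1})|_{Q=\tau=0} = \phi_i p^r + q\phi_i$ coming from the vanishing relation $\kappa(e_\lambda(V))=0$. Without this you cannot conclude that $\FT$ is surjective; for instance, without the relation in $H^*(\PP(V))$ the operator $z\nabla'_{q\partial_q}$ would not generate the $q$-grading from the $\lambda$-grading. Relatedly, your description of $\FT$ as ``sending the canonical generator'' and ``extending $\C[z]$-linearly'' is underdetermined — the map is fixed only by specifying the images of the entire basis $\{\phi_i\lambda^k\}$, and this is done by differentiating the identity $\hJ_V(\tau)=M_{\PP(V)}(\htau(\tau))\Upsilon(\tau)$ in each $\tau^{i,k}$.

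The pairing step you flag as the main obstacle is in fact the more routine part once you have the residue identity; what makes it work cleanly is the isometry of $M_V$ and the intertwining $M_V\circ\bS=\cS\circ M_V$ from \eqref{eq:bS_properties}, reducing the sum $\sum_{k,l}q^{k+l}\Res_{\lambda=0}P_V(\cS^{-k}f,\cS^{-l}g)\,d\lambda$ to $P_{\PP(V)}(\hat f,\hat g)$ term by term; the sum is finite since $\kappa(\cS^{-k}f)=0$ for $k<0$. No delicate contour analysis is needed beyond the single algebraic residue.
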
 


\subsection{Plan of the paper} 
In \S\ref{sec:prelim}, we review Gromov-Witten invariants, Givental formalism and shift operators. We prove a technical Lemma \ref{lem:miniversal} on miniveral elements of the Givental cone, which plays a crucial role in the proof of the mirror theorem. In \S \ref{sec:proof_mirrorthm}, we prove our mirror theorem and give examples. In \S\ref{sec:Fourier_QDM}, we reformulate the mirror theorem as a Fourier duality between quantum $D$-modules of $V$ and $\PP(V)$.  In \S\ref{sec:decomp}, we prove a decomposition of the quantum $D$-module of $\PP(V)$. 

\subsection*{Acknowledgements} 
H.I.~thanks Fumihiko Sanda, Yuuji Tanaka and Constantin Teleman for very useful discussions on shift operators and equivariant cohomology. 
We thank anonymous referees for many valuable comments. 
H.I.~is supported by JSPS grant 16H06335, 20K03582, 21H04994 and 23H01073. Y.K.~is supported by JSPS grant 21J23023. 

\section{Preliminaries} 
\label{sec:prelim} 
We introduce notation on Gromov-Witten invariants and quantum cohomology. Then we discuss quantum connection, the Givental Lagrangian cone, quantum Riemann-Roch (QRR) theorem and equivariant shift operators. 
\subsection{Gromov-Witten invariants and quantum cohomology}
A reference on Gromov-Witten invariants is made to \cite{Manin:Frobenius_book}. Let $X$ be a smooth projective variety over $\C$. For $n\ge 0$ and $d\in H_2(X,\Z)$, let $X_{0,n,d}$ denote the moduli stack of genus-zero, $n$-points, degree $d$ stable maps to $X$. This is a proper Deligne-Mumford stack carrying a virtual fundamental class $[X_{0,n,d}]_{\rm vir}\in H_{2D}(X_{0,n,d},\Q)$ of degree $D=\dim_\C X + c_1(TX)\cdot d+ n-3$. Let $\ev_i \colon X_{0,n,d} \to X$ be the evaluation map at the $i$th marked point. The (genus-zero, descendant) \emph{Gromov-Witten invariants} are the integrals of the form: 
\begin{equation*}
\corr{\alpha_1\psi^{k_1},\dots,\alpha_n\psi^{k_n}}_{0,n,d}^X 
= \int_{[X_{0,n,d}]_{\rm vir}} 
\prod_{i=1}^n \ev_i^*(\alpha_i) \psi_i^{k_i} 
\end{equation*} 
where $\alpha_1,\dots,\alpha_n \in H^*(X)$, $k_1,\dots,k_n$ are nonnegative integers, and $\psi_i$ is the first Chern class of the universal cotangent line bundle at the $i$th marked point. 

Let $\Eff(X) \subset H_2(X,\Z)$ denote the semigroup of effective curve classes. For a module $K$, we write $K[\![Q]\!]$ for the space of formal power series $\sum_{d\in \Eff(X)}k_d Q^d$ with $k_d \in K$. The formal variable $Q$ is called the \emph{Novikov variable}. The module $K[\![Q]\!]$ has a natural ring structure when $K$ is a ring. The quantum cohomology will be defined over the ring $\C[\![Q]\!]$, which is called the \emph{Novikov ring}. 

Let $(\alpha,\beta)_X = \int_X \alpha \cup \beta$ denote the Poincar\'e pairing on $H^*(X)$. Let $\{\phi_i\}_{i=0}^s$ be a homogeneous basis of $H^*(X)$. We denote by $\tau =\sum_{i=0}^s \tau^i \phi_i$ a general point in $H^*(X)$; then $\tau^0,\dots, \tau^s$ form a $\C$-linear coordinate system on $H^*(X)$. The quantum product $\star_\tau$ at a parameter $\tau \in H^*(X)$ is defined by 
\begin{equation} 
\label{eq:quantum_product}
(\alpha\star_\tau \beta,\gamma)_X = \sum_{n\ge 0} \sum_{d\in \Eff(X)} 
\corr{\alpha,\beta,\gamma,\tau,\cdots,\tau}_{0,n+3,d}^X \frac{Q^d}{n!}  
\end{equation} 
for $\alpha,\beta,\gamma \in H^*(X)$. We call $\tau$ the \emph{bulk(-deformation) parameter}. The product $\star_\tau$ introduces an associative and  supercommutative ring structure on $H^*(X)[\![Q,\tau]\!]=H^*(X)[\![Q]\!][\![\tau^0,\dots,\tau^s]\!]$ such that $1\in H^0(X)$ is the unity. This is called \emph{quantum cohomology}. The variables $\tau^i$ associated with odd classes $\phi_i$ have odd parity, and therefore anti-commute with each other by supercommutativity; more precisely we have $\tau^i \tau^j = (-1)^{|i| |j|} \tau^j \tau^i$, $\tau^i\phi_j = (-1)^{|i||j|} \phi_j \tau^i$ with $|i|= \deg \phi_i$. The Novikov variable $Q$ is even and commutes with other elements. We also follow the Koszul sign convention when expanding the correlator $\corr{\alpha,\beta,\gamma,\tau,\dots,\tau}_{0,n+3,d}^X$ in power series of $\tau^i$. The parameter space $H^*(X)$ should be viewed as a formal supermanifold over $\C[\![Q]\!]$; we refer to  \cite{Manin:Frobenius_book} for the details. 

\subsection{Quantum connection and fundamental solution}
\label{subsec:qconn_fundsol}
The \emph{quantum connection} is a meromorphic flat connection $\nabla$ on the vector bundle $H^*(X) \times (H^*(X)\times \C) \to H^*(X)\times \C$ defined by the formulae
\begin{align}
\label{eq:qconn} 
\begin{split} 
\nabla_{\tau^i} & = \partial_{\tau^i} + z^{-1}(\phi_i\star_\tau) \\
\nabla_{z \partial_z} & = z\partial_z  - z^{-1} (E_X\star_\tau) + \mu_X 
\end{split} 
\end{align} 
where $(\tau,z)$ denotes a point on the base $H^*(X)\times \C$, $E_X$ is the Euler vector field (a section of this vector bundle) 
\begin{equation} 
\label{eq:Euler}
E_X = c_1(TX) + \sum_{i=0}^s \left(1- \frac{\deg \phi_i}{2}\right) \tau^i \phi_i 
\end{equation} 
and $\mu_X\in \End(H^*(X))$ is the grading operator 
\begin{equation} 
\label{eq:mu_grading} 
\mu_X (\phi_i) = \left(\frac{\deg \phi_i}{2} - \frac{\dim_\C X}{2}\right) \phi_i.
\end{equation} 
More precisely, the base space $H^*(X)\times \C$ should be considered as a formal supermanifold over $\C[\![Q]\!]$ whose ring of regular functions is $\C[z][\![Q,\tau]\!]$. The flat connection can be also extended in the direction of Novikov variables $Q$: we define for $\xi \in H^2(X)$ 
\begin{equation} 
\label{eq:qconn_Novikov}
\nabla_{\xi Q\partial_Q} = \xi Q \partial_Q + z^{-1} (\xi\star_\tau) 
\end{equation} 
where $\xi Q \partial_Q$ denotes the derivation of $\C[\![Q]\!]$ given by $(\xi Q\partial_Q) Q^d = (\xi \cdot d) Q^d$. 
In algebraic terms, the quantum connection defines pairwise supercommuting operators 
\[
\nabla_{\tau^i}, \ \nabla_{z\partial_z}, \ \nabla_{\xi Q\partial_Q}  \colon H^*(X)[z][\![Q,\tau]\!] \to z^{-1} H^*(X)[z][\![Q,\tau]\!].  
\]
The quantum connection admits the \emph{fundamental solution} $M_X(\tau)$ 
defined as follows (see \cite[\S 1]{Givental:equivariant}, \cite[Proposition 2]{Pandharipande:afterGivental}, \cite[Proposition 2.4]{Iritani:integral}, \cite[Proposition 3.1]{CCIT:MS} for the properties below): 
\begin{equation} 
\label{eq:fundsol}
(M_X(\tau) \phi_i,\phi_j)_X  = (\phi_i,\phi_j)_X + 
\sum_{\substack{d\in \Eff(X), n\ge 0 \\ (d,n) \neq (0,0)}}
\corr{\phi_i,\tau,\dots,\tau,\frac{\phi_j}{z-\psi}}_{0,n+2,d}^X \frac{Q^d}{n!} 
\end{equation} 
where $\phi_j/(z-\psi)$ in the correlator should be expanded as $\sum_{b=0}^\infty \phi_j \psi^b/z^{b+1}$. 
The solution $M_X(\tau)$ belongs to $\End(H^*(X))\otimes \C[z^{-1}][\![Q,\tau]\!]$ and satisfies the following differential equations: 
\begin{align}
\label{eq:diffeq_M}  
\begin{split}
M_X(\tau) \circ \nabla_{\tau^i}&= \partial_{\tau^i} \circ M_X(\tau), \\ 
M_X(\tau) \circ \nabla_{\xi Q\partial_Q} & = \left(\xi Q\partial_Q + z^{-1}\xi \right)\circ  M_X(\tau), \\
M_X(\tau) \circ \nabla_{z \partial_z} & =\left(z \partial_z - z^{-1} c_1(TX) + \mu_X \right) \circ M_X(\tau)
\end{split}  
\end{align} 
where $\xi$ and $c_1(X)$ on the right-hand side act by the cup product. Let $P_X$ denote the $z$-sesquilinear pairing on $H^*(X) \otimes \C[z,z^{-1}][\![Q,\tau]\!]$ induced by the Poincar\'e pairing 
\begin{equation} 
\label{eq:P} 
P_X(f,g) = (f(-z),g(z))_X. 
\end{equation} 
Then the pairing $P_X$ is flat for $\nabla$ and $M_X$ is isometric with respect to $P_X$. 
\begin{align}
\label{eq:pairing_properties} 
\begin{split} 
d P_X(f,g) & = P_X(\nabla f, g) + P_X(f, \nabla g) \\ 
P_X(f,g) & = P_X(M_X(\tau)f, M_X(\tau)g) 
\end{split}
\end{align} 
where $f,g \in H^*(X)\otimes \C[z][\![Q,\tau]\!]$. 
The \emph{$J$-function} of $X$ is defined to be the image of $1$ under $M_X(\tau)$: 
\begin{align} 
\label{eq:J-function}
\begin{split}  
J_X(\tau) & = M_X(\tau)1 = 1 + \sum_{(d,n)\neq (0,0)} \sum_{j=0}^s  \corr{1,\tau,\dots,\tau,\frac{\phi^j}{z-\psi}}_{0,n+2,d}^X \phi_j \frac{Q^d}{n!} \\ 
& = 1 + \frac{\tau}{z} + \sum_{(d,n)\neq (0,0), (0,1)} \sum_{k=0}^\infty \sum_{j=0}^s \frac{\phi^j}{z^{k+2}}\corr{\tau,\dots,\tau,\phi_j \psi^{k}}_{0,n+1,d} 
\frac{Q^d}{n!} 
\end{split} 
\end{align} 
where $\{\phi^j\}$ is the dual basis of $\{\phi_i\}$ such that $(\phi_i,\phi^j)_X = \delta_i^j$. We used the String Equation \cite[\S 1.2]{Pandharipande:afterGivental} in the second line. The other columns $M_X(\tau)\phi_i$ of $M_X(\tau)$ are given by the derivatives $z\partial_{\tau^i}J_X(\tau)$ of the $J$-function due to the differential equation \eqref{eq:diffeq_M}. 

\subsection{The vector bundle case} 
\label{subsec:vector_bundle} 
In this paper, we also consider the quantum cohomology of a non-compact space, the total space of a vector bundle $V \to B$ of rank $r$ over a smooth projective base $B$. We introduce the fibrewise scalar $\C^\times$-action on $V$. This induces a $\C^\times$-action on the moduli space $V_{0,n,d}$, and the virtual class and the $\psi$-classes have canonical equivariant lifts. The $\C^\times$-fixed locus of $V_{0,n,d}$ is identified with $B_{0,n,d}$ and hence is proper. Thus we can define the \emph{equivariant Gromov-Witten invariants}  
\[
\corr{\alpha_1 \psi^{k_1},\dots,\alpha_n \psi^{k_n}}_{0,n,d}^V = \int_{[V_{0,n,d}]_{\rm vir}}^{\C^\times} \prod_{i=1}^n \ev_i^*(\alpha_i) \psi_i^k 
\]
with $\alpha_1,\dots,\alpha_n \in H^*_{\C^\times}(V) = H^*_{S^1}(V) = H^*(B) \otimes \C[\lambda]$ by the virtual localization formula \cite{Graber-Pandharipande}, where $\lambda$ is the $\C^\times$-equivariant (or $S^1$-equivariant) parameter\footnote{In this paper, all $S^1$-actions are induced from $\C^\times$-actions, so we use the terms `$S^1$-equivariant' and `$\C^\times$-equivariant' interchangeably.}. The equivariant Gromov-Witten invariants take values in $\C[\lambda,\lambda^{-1}]$. In \S\ref{subsec:QRR} below, we describe them as the Gromov-Witten invariants of $B$ twisted by $V$ and the inverse equivariant Euler class. 

Let $\{\phi_i\}_{i=0}^s$ be a homogeneous basis of $H^*(B)$. This induces a $\C[\lambda]$-basis $\{\phi_i\}_{i=0}^s$ of $H^*_{S^1}(V)=H^*(B)\otimes \C[\lambda]$ and  $\C[\lambda]$-valued coordinates $\tau^0,\dots,\tau^s$ on $H_{S^1}^*(V)$; $(\tau^0,\dots,\tau^s)$ parametrizes a point $\tau = \sum_{i=0}^s \tau^i \phi_i \in H^*_{S^1}(V)$.  We define the quantum product $\star_\tau$ on $H^*_{S^1}(V)$ by the same formula \eqref{eq:quantum_product}, replacing the Poincar\'e pairing and the Gromov-Witten invariants with their equivariant counterparts. Note that the equivariant Poincar\'e pairing of $V$ is given by the localization formula: 
\begin{equation} 
\label{eq:pairing_V}
(\alpha,\beta)_V := \int_B \alpha \cup \beta \cup e_\lambda(V)^{-1} \qquad 
\text{for $\alpha,\beta \in H^*_{S^1}(V) = H^*(B) \otimes \C[\lambda]$}
\end{equation}
where $e_\lambda(V) = \sum_{i=0}^r \lambda^i c_{r-i}(V)$ is the equivariant Euler class. In general, the quantum product $\alpha\star_\tau \beta$ with $\alpha,\beta \in H^*_{S^1}(V)$ lies only in the localization $H^*_{S^1}(V)_{\rm loc} := H^*_{S^1}(V) \otimes_{\C[\lambda]}  \C[\lambda,\lambda^{-1}]$. As noted in \cite[\S 1.4]{Bryan-Graber}, we can define $\alpha\star_\tau \beta$ without inverting $\lambda$ if $V$ is semi-projective, i.e.~projective over an affine variety. In this case, the evaluation map $\ev_i$ is proper, and therefore the quantum product can be defined as the push-forward along the evaluation map. We show the following. 
\begin{lemma}
\label{lem:semiprojective} 
Suppose that $V^\vee$ is generated by global sections. Then $V$ is semi-projective. In particular, the quantum product $\star_\tau$ is defined on $H^*_{S^1}(V)[\![Q,\tau]\!]$ without localization. 
\end{lemma}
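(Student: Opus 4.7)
The plan has two parts: first, use the global generation of $V^\vee$ to realize $V$ as a projective variety over an affine base, and second, deduce that the evaluation maps on the moduli spaces of stable maps are proper, which makes the quantum product integral in $\lambda$.

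For semi-projectivity, I would fix a surjection $\cO_B^{\oplus N}\twoheadrightarrow V^\vee$ coming from the global generation hypothesis. Dualizing produces a sheaf inclusion $V\hookrightarrow \cO_B^{\oplus N}$, which at the level of total spaces is a closed immersion $V\hookrightarrow B\times \C^N$; explicitly, if $s_1,\dots,s_N\in H^0(B,V^\vee)$ are generating sections, the embedding is $v\mapsto(\pi(v),s_1(v),\dots,s_N(v))$. Composing with the projection $B\times\C^N\to \C^N$ yields a morphism $V\to \C^N$ which is projective, because $B$ is projective and the embedding is a closed immersion. Hence $V$ is projective over the affine variety $\C^N$, i.e.~semi-projective.

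For the assertion about the quantum product, I would use the standard observation that a stable map from a connected proper curve to $V$, composed with $V\to \C^N$, must be constant, so its image lies in a single fibre of $V\to \C^N$, which is a projective subvariety. This shows that the induced map $V_{0,n,d}\to \C^N$ is proper, and since $V\to \C^N$ is separated, it follows that each evaluation map $\ev_i\colon V_{0,n,d}\to V$ is proper as well. With $\ev_i$ proper, the quantum product $\alpha\star_\tau\beta$ can be defined by pushing forward along $\ev_3$ (rather than through the pairing \eqref{eq:pairing_V}, which would introduce $e_\lambda(V)^{-1}$), producing a class in $H^*_{S^1}(V)[\![Q,\tau]\!]$ regular in $\lambda$; agreement with \eqref{eq:quantum_product} after localization is the equivariant projection formula, as in \cite[\S 1.4]{Bryan-Graber}.

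I do not anticipate any serious obstacle: both steps are essentially formal once the closed embedding $V\hookrightarrow B\times \C^N$ has been identified, and the second step is standard in equivariant Gromov-Witten theory for semi-projective targets.
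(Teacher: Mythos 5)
Your proof is correct and follows essentially the same route as the paper: dualize the surjection $\cO_B^{\oplus N}\twoheadrightarrow V^\vee$ to embed $V$ as a subbundle of $B\times\C^N$, project to $\C^N$ to exhibit $V$ as projective over an affine variety, and then invoke properness of the evaluation maps (the Bryan--Graber observation) to define the quantum product by pushforward without inverting $\lambda$. The only difference is that you spell out the properness argument, whereas the paper delegates it to the cited reference and the preceding paragraph.
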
 
\begin{proof} 
By the assumption, we have a surjection $\cO^{\oplus N} \to V^\vee$ for some $N>0$. Taking the dual, we find that $V$ is a subbundle of the trivial bundle $B\times \C^N$. 
The closed embedding $V \hookrightarrow B \times \C^N$ composed with the projection $B \times \C^N \to \C^N$ gives a projective morphism $V \to \C^N$. Thus $V$ is projective over an affine variety. 
\end{proof} 

Hereafter we shall assume that $V^\vee$ is generated by global sections. The quantum connection for $V$ is defined similarly to \eqref{eq:qconn} and \eqref{eq:qconn_Novikov} except that the $\tau$-parameter space $H^*_{S^1}(V)$ is now infinite-dimensional over $\C$. Consider the $\C$-basis $\{\phi_i \lambda^k\}_{0\le i\le s, k\ge 0}$ of $H^*_{S^1}(V)$ and introduce the \emph{$\C$-linear} coordinates $\{\tau^{i,k}\}$ dual to $\{\phi_i \lambda^k\}$ so that $\tau^i = \sum_{k=0}^\infty \tau^{i,k} \lambda^k$. The quantum connection is given by 
\begin{align*} 
\nabla_{\tau^{i,k}} & = \partial_{\tau^{i,k}} + z^{-1} (\phi_i \lambda^k \star_\tau) \\
\nabla_{\xi Q\partial_Q} & = \xi Q\partial_Q + z^{-1} (\xi \star_\tau) \\ 
\nabla_{z\partial_z} & = z\partial_z - z^{-1}(E_V\star_\tau) + \mu_V 
\end{align*} 
where $\xi \in H^2(V) = H^2(B)$ and $E_V$, $\mu_V$ are given by
\begin{align*} 
E_V &= c_1^{S^1}(TV) + \sum_{i=0}^s \sum_{k\ge 0} \left(1 - \frac{\deg \phi_i}{2}-k\right) \tau^{i,k} \phi_i \lambda^k \\
\mu_V (\phi_i \lambda^k)  &= \left(\frac{\deg \phi_i}{2} + k - \frac{\dim B + r}{2} \right) \phi_i \lambda^k 
\end{align*} 
with $c_1^{S^1}(TV) = c_1(TB) + c_1(V) + r\lambda$ the equivariant first Chern class. Note that $\mu_V$ contains the derivation $\lambda \partial_\lambda$. These operators define maps 
\[
\nabla_{\tau^{i,k}}, \ \nabla_{\xi Q\partial_Q}, \ \nabla_{z\partial_z} \colon H^*_{S^1}(V)[z][\![Q,\btau]\!] \to z^{-1} H^*_{S^1}(V)[z][\![Q,\btau]\!] 
\]
where $K[\![\btau]\!]$ denotes the space of formal power series in $\btau=\{\tau^{i,k}\}_{0\le i\le s, k\ge 0}$ with coefficients in $K$ for a module $K$ (we use the boldface symbol $\btau$ to distinguish infinitely many parameters $\{\tau^{i,k}\}$ from finitely many ones  $\{\tau^i\}$). 

We can define the fundamental solution $M_V(\tau)$ for $V$ similarly to \eqref{eq:fundsol} by replacing the Poincar\'e pairing and the Gromov-Witten invariants with their equivariant counterparts. It satisfies the properties \eqref{eq:diffeq_M}, \eqref{eq:pairing_properties} with $X$ and $c_1(TX)$ there replaced with $V$ and $c_1^{S^1}(TV)$  respectively. Since $V$ is semi-projective and the evaluation maps are proper, $M_V(\tau)$ is defined without localization with respect to $\lambda$, i.e.
\begin{equation}
\label{eq:M_V_belongs} 
M_V(\tau) \in \End(H^*(B))\otimes \C[\lambda,z^{-1}][\![Q,\tau]\!]. 
\end{equation} 
We note that the coefficient of $M_V(\tau)$ in front of a fixed monomial $Q^d \tau^I$ is a polynomial in $z^{-1}$ (instead of a formal power series in it). This follows from the virtual localization formula: the $\psi$-classes $\psi_i$ are nilpotent on the fixed locus $B_{0,n,d}$. These properties ensure that the substitution $\lambda \to p + kz$ in $J_V(\tau) = M_V(\tau)1$ (in Theorem \ref{thm:mirrorthm_introd}) is well-defined. 
\begin{remark}
Some of the quantities here (such as $\star_\tau$, $M_V(\tau)$, $J_V(\tau)$) are defined over $\C[\lambda][\![\tau]\!]$ while others (such as $E_V$, $\nabla_{z\partial_z}$) are defined over $\C[\lambda][\![\btau]\!]$.  
Note that we have the natural inclusion $\C[\lambda][\![\tau]\!] \subset \C[\lambda][\![\btau]\!]$ sending $\tau^i$ to $\sum_{k\ge 0} \tau^{i,k}\lambda^k$. This inclusion is, however, not preserved by $\lambda\partial_\lambda$ and hence we need to work with $\C[\lambda][\![\btau]\!]$ when considering $\nabla_{z\partial_z}$. 
\end{remark} 

\subsection{Givental Lagrangian cones}
\label{subsec:Givental_cone} 
We briefly review Givental's symplecitc formalism for genus-zero Gromov-Witten theory, as developed in \cite{Givental:symplectic,Coates-Givental}. The \emph{Givental symplectic space} of a smooth projective variety $X$ is the infinite-dimensional $\C[\![Q]\!]$-vector space 
\begin{equation} 
\label{eq:Givental_space} 
\cH_X = H^*(X)(\!(z^{-1})\!)[\![Q]\!] 
\end{equation} 
equipped with the symplectic form 
\[
\Omega(f,g) = -\Res_{z=\infty} (f(-z),g(z))_X dz. 
\]
The Givental space $\cH_X$ has a canonical decomposition $\cH_X = \cH_+\oplus \cH_-$ into maximally isotropic subspaces $\cH_\pm$, where 
\[
\cH_+= H^*(X)[z][\![Q]\!], \quad 
\cH_- = z^{-1} H^*(X)[\![z^{-1}]\!][\![Q]\!].   
\]
The symplectic form $\Omega$ identifies $\cH_-$ with the dual space of $\cH_+$, hence $\cH_X=\cH_+\oplus \cH_-$ with the cotangent bundle $T^* \cH_+$ of $\cH_+$. Let $\cF_X$ denote the genus-zero descendant Gromov-Witten potential:  
\[
\cF_X(-z+\bt(z)) = \sum_{\substack{d\in \Eff(X), n\ge 0\\ (d,n)\neq (0,0), (0,1), (0,2)}} \corr{\bt(\psi),\dots,\bt(\psi)}_{0,n,d}^X \frac{Q^d}{n!}    
\]
where $\bt(z) = \sum_{n\ge 0} t_n z^n$ with $t_n =\sum_{i=0}^s t^i_n\phi_i \in H^*(X)$. The argument $-z + \bt(z)$ represents a point in the formal neighbourhood of $-z$ in $\cH_+$ and $\cF_X$ is regarded as a function on the formal neighbourhood of $-z$ in $\cH_+$. The \emph{Givental Lagrangian cone} $\cL_X^{\rm orig} \subset \cH_X$ is a submanifold-germ defined as the graph of $d \cF_X$ in $T^*\cH_+ \cong \cH_X$. It consists of 
\begin{multline}
\label{eq:point_on_the_cone} 
-z + \bt(z) + \sum_{k\ge 0} \sum_{i=0}^s \frac{\phi^i}{(-z)^{k+1}} \parfrac{\cF_X}{t_k^i} \\
= -z + \bt(z) + \sum_{\substack{d\in \Eff(X),n\ge 0 \\ (d,n) \neq (0,0), (0,1)}} \sum_{k\ge 0} \sum_{i=0}^s   \frac{\phi^i}{(-z)^{k+1}} \corr{\bt(\psi),\dots,\bt(\psi),\phi_i \psi^k}_{0,n+1,d}^X 
\frac{Q^d}{n!}
\end{multline} 
with $\bt(z) \in \cH_+$. More rigorously, for a set $x=(x_1,x_2,\dots)$ of formal parameters, we say that an element $f \in \cH_X[\![x]\!]$ is a \emph{$\C[\![Q,x]\!]$-valued point} of $\cL_X^{\rm orig}$ if $f$ is of the form \eqref{eq:point_on_the_cone} for some $\bt(z) \in \cH_+[\![x]\!]$ with $\bt(z)|_{Q=x=0} =0$. We refer to \cite[Appendix B]{CCIT:computing} for the definition of $\cL_X$ as an infinite-dimensional formal scheme over $\C[\![Q]\!]$. The nilpotence\footnote{This may fail in equivariant theory, and this is a reason why we allow infinitely many negative powers of $z$ in general. However we do not deal with such cases in this paper.} of the $\psi$-class implies that $\cL_X^{\rm orig}$ is contained in the following polynomial form 
\begin{equation} 
\label{eq:H_X_pol}
\cH_{X,\rm pol} = H^*(X)\otimes \C[z,z^{-1}][\![Q]\!],  
\end{equation} 
i.e.~a $\C[\![Q,x]\!]$-valued point on $\cL_X^{\rm orig}$ is contained in $\cH_{X,\rm pol}[\![x]\!]$. 

\begin{example} 
\label{exa:J} 
After multiplying by $z$ and flipping the sign of $z$, the $J$-function $J_X(\tau)$ in \eqref{eq:J-function} lies on the $\cL_X^{\rm orig}$: 
\[
z J_X(\tau)|_{z\to -z} \in \cL_X^{\rm orig}.  
\]
It defines a $\C[\![Q,\tau]\!]$-valued point on $\cL_X^{\rm orig}$. 
\end{example} 
\begin{notation} 
\label{nota:flip} 
The $J$-function and the Givental cone have opposite sign conventions for $z$. In the following discussion, to ease the notation, we shall instead use the Givental cone $\cL_X := \cL_X^{\rm orig}|_{z\to -z}$ with the sign of $z$ flipped, so that $z J_X(\tau) \in \cL_X$. 
\end{notation} 

As the name suggests, $\cL_X$ is a cone, i.e.~the dilation vector field  on $\cH_X$ (the vector field assigning $f$ to any point $f\in \cH_X$) is tangent to $\cL_X$. Moreover it satisfies the following ``overruled'' property \cite[Theorem 1]{Givental:symplectic}: 
\begin{quote}
\it the tangent space $T$ of $\cL_X$ at any point of $\cL_X$ is tangent to $\cL_X$ precisely along $zT\subset T$. 
\end{quote} 
Since $\cL_X$ is a cone, the tangent space $T$ at $f\in \cL_X$ (regarded as an affine subspace of $\cH_X$) contains both the origin and $f$. The overruled property says that (1) $T \cap \cL_X = z T$ and (2) the tangent space of $\cL_X$ at any point on $zT$ equals $T$. It follows that the Givental cone is a union of semi-infinite subspaces $zT$ over all tangent spaces $T$. Moreover, the tangent spaces to $\cL_X$ form a finite-dimensional family: every tangent space equals the tangent space $T_\tau$ at $z J_X(\tau)$ for some $\tau$, and $T_\tau$ is a free $\C[z][\![Q]\!]$-module generated by the derivatives $z \partial_{\tau^i}J_X(\tau) = M_X(\tau)\phi_i$. Hence we have 
\begin{equation} 
\label{eq:cone_as_a_union} 
\cL_X = \bigcup_{\tau} z M_X(\tau)\cH_+. 
\end{equation} 
A precise meaning of this equality is as follows: any $\C[\![Q,x]\!]$-valued point on $\cL_X$ can be written as $z M_X(\tau) f$ for some $\tau\in H^*(X)\otimes \C[\![Q,x]\!]$ and $f \in \cH_+[\![x]\!]$ such that $\tau|_{Q=x=0} = 0$ and $f|_{Q=x=0} = 1$. 


\subsubsection{The vector bundle case} 
\label{subsubsec:equivariant_cone}
We explain the Givental cone associated with equivariant Gromov-Witten invariants of the vector bundle $V\to B$. We again assume that $V^\vee$ is generated by global sections so that $V$ is semi-projective (see Lemma \ref{lem:semiprojective}). The Givental symplectic space $\cH_V$ for $V$ is defined similarly to \eqref{eq:Givental_space} by replacing $H^*(X)$ with $H^*_{S^1}(V)$. 
\[
\cH_V = H^*_{S^1}(V)(\!(z^{-1})\!)[\![Q]\!] = H^*(B)\otimes \C[\lambda](\!(z^{-1})\!)[\![Q]\!].  
\]
The genus-zero descendant Gromov-Witten potential $\cF_V$ for $V$, defined in terms of the equivariant Gromov-Witten invariants of $V$, takes values in $\C[\lambda,\lambda^{-1}][\![Q]\!]$. The properness of the evaluation maps implies that the points \eqref{eq:point_on_the_cone} on the Givental cone with $X=V$ do not contain negative powers of $\lambda$ whenever the input $\bt(z)$ lies in $\cH_+= H^*_{S^1}(V)[z][\![Q]\!]$. Hence we get the Givental cone $\cL_V$ for $V$ as a subset of $\cH_V$. To be precise, with notation as in \S\ref{subsec:vector_bundle}, a $\C[\lambda][\![Q,x]\!]$-valued point of $\cL_V$ is a point of the form 
\[
z + \bt(z) +  \sum_{\substack{d\in \Eff(X),n\ge 0 \\ (d,n) \neq (0,0), (0,1)}} \sum_{k\ge 0} \sum_{i=0}^s   \frac{e_\lambda(V) \phi^i}{z^{k+1}} \corr{\bt(-\psi),\dots,\bt(-\psi),\phi_i \psi^k}_{0,n+1,d}^V 
\frac{Q^d}{n!}
\]
for some $\bt(z) \in \cH_+[\![x]\!] = H^*_{S^1}(V)[z][\![Q,x]\!]$ with $\bt(z)|_{Q=x=0}=0$, 
where $\{\phi^i\}$ is a basis of $H^*(B)$ dual to $\{\phi_i\}$ in the sense that $(\phi_i,\phi^j)_B = \delta_i^j$: hence $(\phi_i, e_\lambda(V) \phi^j)_V=\delta_i^j$.  As discussed at the end of \S\ref{subsec:vector_bundle}, the virtual localization formula furthermore shows that the Givental cone $\cL_V$ is contained in the following polynomial form $\cH_{V,\rm pol} \subset \cH_V$: 
\begin{equation} 
\label{eq:H_V_pol} 
\cH_{V,\rm pol} = H^*_{S^1}(V)\otimes_{\C[\lambda]} \C[\lambda,z,z^{-1}][\![Q]\!]. 
\end{equation} 

\subsection{Twisted Gromov-Witten invariants and quantum Riemann-Roch theorem} 
\label{subsec:QRR} 
We review the twisted Gromov-Witten invariants and the quantum Riemann-Roch (QRR) theorem due to Coates-Givental \cite{Coates-Givental}. The data of the twist are given by a vector bundle and a characteristic class. Let $\bc$ denote the universal multiplicative characteristic class with parameters $\bs =(s_0,s_1,\dots)$:
\[
\bc(\cdot) = \exp\left(\sum_{k=0}^\infty s_k \ch_k(\cdot)\right).  
\]
For a vector bundle $W \to X$, we introduce the $(W,\bc)$-twisted Gromov-Witten invariants of $X$ as 
\[
\corr{\alpha_1\psi^{k_1},\dots,\alpha_n \psi^{k_n}}^{X,(W,\bc)}_{0,n,d} = 
\int_{[X_{0,n,d}]_{\rm vir}} \left(\prod_{i=1}^n \ev_i^*(\alpha_i) \psi_i^{k_i}\right) \cup \bc(W_{0,n,d}) 
\]
where $W_{0,n,d}=\R\pi_* f^* W\in K^0(X_{0,n,d})$ is the complex of vector bundles constructed by the following universal family of stable maps:
\[
\xymatrix{C_{0,n,d}
\ar[d]_{\pi} \ar[r]^{f} & X \\
X_{0,n,d} & 
}
\]
We can similarly define the twisted versions of quantum cohomology, quantum connection, fundamental solution, $J$-function, Givental symplectic space and Givental cone, by using the twisted Gromov-Witten invariants and the twisted Poincar\'e pairing 
\begin{equation} 
\label{eq:tw_Poincare} 
(\alpha,\beta)_{X,(W,\bc)} = \int_X \alpha \cup \beta \cup \bc(W).  
\end{equation} 

In this paper, the twisted Gromov-Witten invariants appear mainly  in the following two cases. 
\begin{itemize} 
\item[(a)] $W$ is convex, i.e.~we have $H^1(C,f^*W)=0$ for any genus-zero stable map $C\to X$ and $\bc$ is the equivariant Euler class $e_\lambda$. This corresponds to the choice of parameters 
\[
s_k= 
\begin{cases} 
\log \lambda & k=0; \\ 
(-1)^{k-1} (k-1)! \lambda^{-k} & k\ge 1.
\end{cases}
\]

\item[(b)] $W^\vee$ is generated by global sections and $\bc$ is the \emph{inverse} equivariant Euler class $e_\lambda^{-1}$. This corresponds to the negative of the parameters $s_k$ given above. 
\end{itemize} 
In the case (a), the $(W,e_\lambda)$-twisted Gromov-Witten invariants are closely related to the Gromov-Witten invariants of the zero-locus $Z \subset X$ of a regular section of $W$. The convexity assumption implies that $W_{0,n,d}$ is represented by a vector bundle. Hence we can take the non-equivariant limit $\lambda \to 0$ of the $(W,e_\lambda)$-twisted invariants. The functoriality of virtual classes  \cite{KKP:functoriality} implies 
\[
\lim_{\lambda \to 0} \corr{\alpha_1 \psi^{k_1},\dots,\alpha_n \psi^{k_n}}_{0,n,d}^{X,(W,e_\lambda)} 
=\sum_{i_* d' = d} \corr{i^*(\alpha_1) \psi^{k_1},\dots, i^*(\alpha_n) \psi^{k_n}}_{0,n,d'}^Z 
\]
where $i\colon Z \to X$ is the inclusion map. Moreover, the fundamental solution $M_{X,(W,e_\lambda)}$ in the $(W,e_\lambda)$-twisted theory satisfies 
(see \cite[\S 2.1]{Pandharipande:afterGivental}, \cite[Proposition 2.4]{Iritani:periods})  
\[
\lim_{\lambda \to 0} i^* M_{X,(W,e_\lambda)}(\tau) \phi = M_Z(i^*\tau) i^*\phi \Bigr|_{H_2(Z,\Z) \to H_2(X,\Z)}
\]
for $\tau,\phi\in H^*(X)$, where the subscript $H_2(Z,\Z) \to H_2(X,\Z)$ means to replace the Novikov variable $Q^d$ with $Q^{i_*d}$ for $d\in H_2(Z,\Z)$. This means that points on the $(W,e_\lambda)$-twisted Givental cone yield, after applying $i^*$ and taking the non-equivariant limit, points on the Givental cone of $Z$. 

The case (b) corresponds to the equivariant Gromov-Witten invariants of the total space of $W$. The virtual localization formula \cite{Graber-Pandharipande} implies that the equivariant Gromov-Witten invariants of the vector bundle $V\to B$ equal the $(V,e_\lambda^{-1})$-twisted Gromov-Witten invariants of $B$:
\[
\corr{\alpha_1\psi^{k_1},\dots,\alpha_n \psi^{k_n}}^V_{0,n,d} = 
\corr{\alpha_1\psi^{k_1},\dots,\alpha_n \psi^{k_n}}^{B,(V,e_\lambda^{-1})}_{0,n,d}. 
\]

We now state the (genus-zero) QRR theorem of Coates-Givental \cite{Coates-Givental}. Let $\cH_{X,(W,\bc)} = H^*(X)(\!(z^{-1})\!)[\![Q]\!][\![\bs]\!]$ denote the Givental space for the $(W,\bc)$-twisted theory: it is equipped with the symplectic form induced by the twisted Poincar\'e pairing \eqref{eq:tw_Poincare}. Define the symplectic operator\footnote{In view of our convention from Notation \ref{nota:flip}, we flipped the sign of $z$ from the original definition.} $\Delta_{(W,\bc)}\colon \cH_X[\![\bs]\!] \to \cH_{X,(W,\bc)}$ as 
\begin{equation} 
\label{eq:symp_operator} 
\Delta_{(W,\bc)} = \exp\left(\sum_{l,m\ge 0} s_{l+m-1} \frac{B_m}{m!} \ch_l(W) (-z)^{m-1}\right)
\end{equation} 
where we set $s_{-1}=0$ and $\{B_m\}_{m=0}^\infty$ are the Bernoulli numbers defined by $\sum_{m=0}^\infty \frac{B_m}{m!} x^m = x/(e^x-1)$; we have $B_0=1,B_1=-\frac{1}{2},B_2=\frac{1}{6}$, etc., and $B_{2k+1} =0$ for all $k\ge 1$. Let $\cL_{X,(W,\bc)}\subset \cH_{X,(W,\bc)}$ denote the Givental cone for the $(W,\bc)$-twisted theory. 
\begin{theorem}[{\cite[Corollary 4]{Coates-Givental}}] 
\label{thm:QRR} 
We have $\cL_{X,(W,\bc)}=\Delta_{(W,\bc)} \cL_X$. 
\end{theorem}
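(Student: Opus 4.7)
The plan is to reduce the statement to an infinitesimal form and then apply Grothendieck--Riemann--Roch to the universal stable-map family. Because both the twist $\bc(\cdot)=\exp\bigl(\sum_k s_k \ch_k(\cdot)\bigr)$ and the operator $\Delta_{(W,\bc)}$ are exponentials of linear functions in $\bs=(s_0,s_1,\dots)$, it is enough to prove the differentiated identity
\[
\partial_{s_k}\cL_{X,(W,\bc)}\bigr|_{\bs=0}
= \bigl(\partial_{s_k}\log \Delta_{(W,\bc)}\bigr|_{\bs=0}\bigr)\cdot \cL_X
\]
for each $k\ge 0$, viewed as an equality of tangent vector fields to the cone. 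The right-hand side, by the definition \eqref{eq:symp_operator}, is the symplectic vector field $\sum_{m\ge 0}\frac{B_m}{m!}\ch_{k+1-m}(W)(-z)^{m-1}$ acting on $\cH_X$, so the problem becomes showing that the deformation of the Givental cone produced by inserting $\bc$ into the integrand is generated by exactly this Bernoulli-weighted vector field.

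The second step is to translate the insertion $\ch_k(W_{0,n,d})$ into classes on $X_{0,n,d}$ pulled up from $X$ plus $\psi$-classes and boundary terms. Apply GRR to $\pi\colon C_{0,n,d}\to X_{0,n,d}$:
\[
\ch(W_{0,n,d}) \;=\; \ch(R\pi_* f^* W)\;=\; \pi_*\!\bigl(\ch(f^*W)\cdot \td(T_\pi)\bigr).
\]
The relative tangent complex $T_\pi$ is analyzed via the short exact sequence relating $\omega_\pi$ to the marked sections $\sigma_i$ and the node locus. Expanding $\td(T_\pi)=x/(1-e^{-x})$ in terms of Bernoulli numbers then produces the factors $B_m/m!$, with $x$ playing the role of $-c_1(\omega_\pi(\sum \sigma_i))$; pushing forward yields two types of contributions: (i) insertions supported at the marked sections, which read as $\psi_i^{m-1} \cup \ev_i^*\ch_l(W)$ and therefore match the $(-z)^{m-1}$-expansion of $\Delta_{(W,\bc)}$ via the standard dictionary $z\leftrightarrow\psi$ on the Givental cone, and (ii) boundary contributions coming from nodes.

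The third step is to interpret the boundary contributions through the gluing axiom. A node in a stable map factors as a fiber product of two moduli spaces, and inserting $\psi$-classes at the two sides of the node precisely reproduces the way in which the negative-$z$ part of the symplectic operator $\Delta_{(W,\bc)}$ acts on the cotangent fibers of $\cH_+$ via the symplectic form $\Omega$. Combined with the dilaton/divisor identities these match the remaining terms in $\partial_{s_k}\log\Delta_{(W,\bc)}|_{\bs=0}$. Verifying that the cone $\cL_{X,(W,\bc)}$ genuinely deforms as a Lagrangian cone under this infinitesimal motion (rather than merely as a point set) uses the overruled structure of $\cL_X$ recalled in \S\ref{subsec:Givental_cone} and the fact that $\Delta$ is symplectic, which then allows one to integrate the infinitesimal identity in $\bs$ to obtain the full statement.

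The main obstacle I anticipate is the bookkeeping of the boundary (node) contributions: they arise with both positive and negative powers of $\psi$ after Todd expansion, and one must carefully apply the gluing formula and the string/dilaton equations to recognize them as the negative-$z$ part of the symplectic operator acting on $\cH_X$. The clean appearance of $B_m/m!$ on both sides is really a statement that the Bernoulli identity for $\td$ combined with the node-splitting bilinear form on the boundary produces precisely the standard Bernoulli generating function in $(-z)^{m-1}$; getting this combinatorial match right is the heart of the argument.
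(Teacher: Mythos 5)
The paper does not reprove this theorem; it is stated as a citation to Corollary~4 of Coates--Givental's \emph{Quantum Riemann--Roch, Lefschetz and Serre}, so there is no ``paper's own proof'' to compare against. Your outline is, in substance, a summary of the original Coates--Givental argument: differentiate in the parameters $\bs$ (legitimate because both $\bc$ and $\Delta_{(W,\bc)}$ are exponentials linear in $\bs$), apply Grothendieck--Riemann--Roch to the universal curve $\pi\colon C_{0,n,d}\to X_{0,n,d}$ following Mumford and Faber--Pandharipande to rewrite $\ch(R\pi_*f^*W)$ in terms of $\psi$- and boundary classes, identify the marked-point contributions with the diagonal (``dequantized'') part of $\log\Delta_{(W,\bc)}$ via the dictionary $z\leftrightarrow\psi$, and identify the node contributions via gluing with the quantization of the quadratic part of the Hamiltonian. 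You also correctly note that one must invoke the overruled/Lagrangian-cone structure and the symplecticity of $\Delta$ to integrate the infinitesimal identity.

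What the sketch leaves implicit, and where the genuine content of the proof lives, is the node bookkeeping you flag as the ``main obstacle'': one needs the full GRR expansion with the boundary term supported on $\pi(\mathrm{Nodes})$ carrying the class $\sum_{a+b=m-2}(-\psi')^a(-\psi'')^b$ (where $\psi',\psi''$ are cotangent classes on the two branches), and one must use the splitting axiom plus the dilaton equation to convert this into the operator $\sum_{m\ge 0}\frac{B_m}{m!}\ch_{k+1-m}(W)(-z)^{m-1}$ acting on the cone. One should also be careful that $\Delta_{(W,\bc)}$ is a symplectomorphism between two \emph{different} symplectic spaces, $\cH_X$ and $\cH_{X,(W,\bc)}$, because the twisted Poincar\'e pairing \eqref{eq:tw_Poincare} changes; this is why $\Delta$ as written is symplectic rather than an automorphism of a single space. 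None of these are errors in your outline --- they are exactly the technical gaps you yourself anticipate --- but they are the whole of the proof, so your proposal should be read as a faithful roadmap of the known argument rather than an independent or alternative one.
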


\begin{remark} 
\label{rem:modified_Euler_twist} 
In the case of $e_\lambda$ or $e_\lambda^{-1}$-twist, we have $s_0=\pm \log \lambda$ whereas $s_k$'s with positive $k$ are proportional to $\lambda^{-k}$. The existence of $\log \lambda$ makes it difficult to treat the $e_\lambda^\pm$-twisted Givental cone formally in terms of the $\lambda^{-1}$-adic topology. We deal with the $e_\lambda^\pm$-twist in the following two steps: first we consider the twist by $\te_\lambda^\pm$, where 
\[
\te_\lambda(W) = \sum_{i\ge 0} \lambda^{-i} c_i(W) = \lambda^{-\rank W} e_\lambda(W),
\]  
and get the $\te_\lambda^{\pm}$-twisted Givental cone $\tcL$ defined over $\C[\![Q,\lambda^{-1}]\!]$; then points on the $e_\lambda^\pm$-twisted cone arise from points on $\tcL$ by the change of variables $Q^d \to Q^d \lambda^{\pm c_1(W)\cdot d}$ (this follows easily from the definition of points on the cone, together with $\rank W_{0,n,d} = \rank W + c_1(W) \cdot d$ and the change of the twisted Poincar\'e pairings). 
\end{remark}

\subsection{Moving points on the Givental cone via differential operators} 
\label{subsec:moving} 
We review a well-known technique to construct points on the Givental cone via the action of differential operators, and show that any ``miniversal'' slices on the cone arise from the $J$-function in this way. This plays a key role in the proof of the main theorems. Let $x=(x_1,x_2,\dots)$, $y=(y_1,y_2,\dots)$ be two sets of formal parameters. We consider a differential operator $F=F(x,z\partial_x,z)$ such that 
\[
F\in \C[z][\![x]\!]\langle z \partial_{x_1},z\partial_{x_2},\dots \rangle [\![y,Q]\!], \qquad 
F|_{y=Q=0} = 0.   
\] 
The second condition ensures that $\exp(F/z)$ converges in the $(Q,y)$-adic topology. 
\begin{lemma}[{\cite[\S 8]{Coates-Givental}, \cite[\S 4.2]{CCIT:computing}}] 
\label{lem:flow_on_the_cone}
The action of the differential operator $\exp(F/z)$ preserves $\C[\![Q,x,y]\!]$-valued points on $\cL_X$. 
\end{lemma}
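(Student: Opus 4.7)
The plan is to introduce an auxiliary formal parameter $\epsilon$, set $f_\epsilon := \exp(\epsilon F/z) f$, and argue by deformation that $f_\epsilon$ is a $\C[\![Q,x,y,\epsilon]\!]$-valued point of $\cL_X$ whenever $f$ is a $\C[\![Q,x,y]\!]$-valued point of $\cL_X$; specializing $\epsilon = 1$ yields the claim. The exponential series $\sum_{n \ge 0} (\epsilon F/z)^n f/n!$ converges in the $(y,Q,\epsilon)$-adic topology because $F|_{y=Q=0} = 0$ forces each application of $F/z$ to raise the $(y,Q)$-order. The argument splits into (i) verifying that the vector field $V := F/z$ is tangent to $\cL_X$ at every point and (ii) integrating this tangent vector field to a flow preserving $\cL_X$.

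For step (i), I use the description $\cL_X = \bigcup_\tau zM_X(\tau)\cH_+$ from \eqref{eq:cone_as_a_union} and the identification $T_g = M_X(\tau)\cH_+$ of the tangent space at $g = zM_X(\tau)h$ with $h \in \cH_+$. The key computation, based on the identity $\partial_{\tau^i}M_X(\tau) = z^{-1}M_X(\tau)(\phi_i\star_\tau)$ extracted from \eqref{eq:diffeq_M}, is
\begin{equation*}
z\partial_{x_j}\bigl(zM_X(\tau) u\bigr) = zM_X(\tau)\bigl[(\partial_{x_j}\tau)\star_\tau u + z\partial_{x_j}u\bigr] \in zM_X(\tau)\cH_+
\end{equation*}
for any $u \in \cH_+$. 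Multiplication by $z$, by $x_i$, or by a coefficient in $\C[\![x,y,Q]\!]$ also stabilizes $zM_X(\tau)\cH_+$. By induction on the number of $z\partial_{x_j}$-factors in a monomial, every operator in $\C[z][\![x]\!]\langle z\partial_{x_1},z\partial_{x_2},\ldots\rangle[\![y,Q]\!]$ maps $zM_X(\tau)\cH_+$ into itself; applied to $F$ this gives $Fg \in zM_X(\tau)\cH_+ = zT_g$, hence $(F/z)g \in T_g$, as required.

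For step (ii), write the putative flow as $f_\epsilon = zM_X(\tau_\epsilon)h_\epsilon$ and differentiate in $\epsilon$; the tangency from step (i) lets me write $(F/z)f_\epsilon = M_X(\tau_\epsilon)k_\epsilon$ with $k_\epsilon \in \cH_+$, reducing the ODE $\partial_\epsilon f_\epsilon = (F/z)f_\epsilon$ to the equation
\begin{equation*}
(\partial_\epsilon \tau_\epsilon)\star_{\tau_\epsilon} h_\epsilon + z\partial_\epsilon h_\epsilon = k_\epsilon
\end{equation*}
in $\cH_+$. Splitting $k_\epsilon$ into a $z$-constant piece and a $z$-positive piece and matching them with $(\partial_\epsilon\tau_\epsilon)\star_{\tau_\epsilon} h_\epsilon|_{z=0}$ and $z\partial_\epsilon h_\epsilon$ respectively determines $(\partial_\epsilon\tau_\epsilon, \partial_\epsilon h_\epsilon)$ order by order in $(\epsilon,y,Q)$; invertibility of $\star_{\tau_\epsilon}$-multiplication by $h_\epsilon|_{z=0}$ at the initial order follows from $h_\epsilon|_{\epsilon=y=Q=0} = 1$ and $\tau_\epsilon|_{\epsilon=y=Q=0} = 0$. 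The main technical hurdle is this integration step: the parametrization of $\cL_X$ by $(\tau,h)$ is redundant because of the overruled property $T \cap \cL_X = zT$, so one must choose a splitting consistently to make the ODE solvable formally. Once that is done, the $(y,Q,\epsilon)$-adic nilpotency coming from $F|_{y=Q=0}=0$ ensures the iteration terminates at each order and produces the required $(\tau_\epsilon,h_\epsilon)$ with the correct initial data, certifying $f_\epsilon \in \cL_X$.
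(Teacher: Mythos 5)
Your proof is correct and follows the same two-step outline as the paper's brief sketch (tangency of the vector field $f\mapsto z^{-1}Ff$, then integration of the flow); the difference is that you fill in both halves explicitly, whereas the paper simply invokes the overruled property and cites the references given in the lemma statement. Your step (i) — deriving tangency from $\partial_{\tau^i}M_X(\tau)=z^{-1}M_X(\tau)(\phi_i\star_\tau)$ together with $\cL_X=\bigcup_\tau zM_X(\tau)\cH_+$, and inducting on the number of $z\partial_{x_j}$-factors in $F$ — is a clean, self-contained route to the same tangency fact. Two small remarks on step (ii). First, the matching is stated a bit loosely: the $z^{\ge 1}$-part of $(\partial_\epsilon\tau_\epsilon)\star_{\tau_\epsilon}h_\epsilon$ also contributes to the equation for $\partial_\epsilon h_\epsilon$, not only $z\partial_\epsilon h_\epsilon$; the clean procedure is to first solve the $z^0$-component $(\partial_\epsilon\tau_\epsilon)\star_{\tau_\epsilon}h_\epsilon^{(0)}=k_\epsilon^{(0)}$ for $\partial_\epsilon\tau_\epsilon$ (invertibility of multiplication by $h_\epsilon^{(0)}$ as you note), and then the $z^{\ge1}$-part determines $\partial_\epsilon h_\epsilon\in\cH_+$, with divisibility by $z$ automatic. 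Second, you leave implicit why specializing $\epsilon=1$ at the end is legitimate: one needs the vanishing-at-the-origin condition on $\bt_\epsilon(z)$ to hold at $\epsilon=1$, not just at $\epsilon=0$, and this follows because $F|_{y=Q=0}=0$ gives $f_\epsilon|_{Q=y=0}=f|_{Q=y=0}$ for all $\epsilon$, so $\bt_\epsilon|_{Q=x=y=0}=0$ uniformly in $\epsilon$. Neither point is a gap in substance, but both deserve a sentence.
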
 
\begin{proof} This is well-known, at least in the case where $F$ is a constant differential operator (i.e.~do not depend on $x$). We include a brief discussion. The overruled property implies that $z^{-1} F f$ for a $\C[\![Q,x,y]\!]$-valued point $f$ of $\cL_X$ lies in the tangent space of $\cL_X$ at $f$. Thus the assignment $f\mapsto z^{-1}F f$ defines a vector field on the mapping scheme\footnote{In \cite[Appendix B]{CCIT:computing}, the formal scheme $\cL_X$ was given as a functor from topological $\C[\![Q]\!]$-algebras to sets. It is represented by a certain formal power series ring over $\C[\![Q]\!]$. The mapping scheme here is given by the functor $R \mapsto \cL_X(R[\![x,y]\!])$, which is again represented by a formal power series ring over $\C[\![Q]\!]$.} $\Map(\Spf(\C[\![Q,x,y]\!]),\cL_X)$ and $\exp(F/z)f$ is the time-one flow of the vector field.  
\end{proof} 

We also prove a sort of converse statement. Let $\tau= \sum_{i=0}^s \tau^i \phi_i$ be a parameter in $H^*(X)$ as before. Recall from Example \ref{exa:J} that $z J_X(\tau)$ is a $\C[\![Q,\tau]\!]$-valued point on $\cL_X$.  We say that a $\C[\![Q,\tau,y]\!]$-valued point $f$ on the Givental cone $\cL_X$ is a \emph{miniversal slice} if it satisfies 
\[
f|_{Q=y=0} = z + \tau+ O(z^{-1}). 
\]
By the definition of $\cL_X$ (see \eqref{eq:point_on_the_cone}), this is equivalent to $f|_{Q=y=0} = z e^{\tau/z}$. 
\begin{lemma} 
\label{lem:miniversal} 
Any miniversal slice of the Givental cone $\cL_X$ can be obtained from $z J_X(\tau)$ by applying a differential operator of the form $\exp(F/z)$ for some $F=F(\tau,z\partial_{\tau},z)$ such that $F\in \sum_{i=0}^s \C[z][\![Q,\tau,y]\!]z \partial_{\tau^i}$ and $F|_{Q=y=0}=0$. 
\end{lemma}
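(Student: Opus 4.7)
The plan is to construct $F$ by induction on the $\mathfrak{m}$-adic order, where $\mathfrak{m}=(Q,y)$. I will build admissible operators $F_n=\sum_{i=0}^s F_{n,i}\, z\partial_{\tau^i}$ with $F_{n,i}\in \mathfrak{m}\cdot \C[z][\![Q,\tau,y]\!]$ satisfying $g_n:=\exp(F_n/z)\,zJ_X(\tau)\equiv f \pmod{\mathfrak{m}^{n+1}}$, and then take $F=\lim_{n\to\infty}F_n$. The base case $F_0=0$ reduces to checking $zJ_X(\tau)|_{Q=0}=z e^{\tau/z}=f|_{Q=y=0}$, which follows from a direct calculation using degree-zero Gromov--Witten invariants together with $\int_{\overline{M}_{0,n+1}}\psi_{n+1}^{n-2}=1$.

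For the inductive step, I first observe that $g_n|_{Q=y=0}=ze^{\tau/z}$, so $g_n$ is itself a miniversal slice. By the overruled property, $T_{g_n}\cL_X=M_X(\sigma_n)\cH_+$ for some $\sigma_n$, and the derivatives $\{z\partial_{\tau^i}g_n\}_{i=0}^s$ form a $\C[z][\![Q,\tau,y]\!]$-basis by Nakayama (their leading terms $z\phi_i e^{\tau/z}$ manifestly span a free module of rank $\dim H^*(X)$). Since $\cL_X$ is a smooth formal subscheme of $\cH_X$ and $f,g_n$ agree modulo $\mathfrak{m}^{n+1}$, the difference $\Delta:=f-g_n$, reduced modulo $\mathfrak{m}^{n+2}$, lies in $T_{g_n}\cL_X\otimes (\mathfrak{m}^{n+1}/\mathfrak{m}^{n+2})$. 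Freeness yields a unique expansion $\Delta\equiv \sum_i H_i\,z\partial_{\tau^i}g_n \pmod{\mathfrak{m}^{n+2}}$ with $H_i\in \mathfrak{m}^{n+1}\cdot \C[z][\![Q,\tau,y]\!]$.

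Setting $G:=\sum_i (zH_i)\,z\partial_{\tau^i}$ and $F_{n+1}:=F_n+G$, the required congruence $g_{n+1}\equiv f \pmod{\mathfrak{m}^{n+2}}$ is verified by a Duhamel expansion: $\exp((F_n+G)/z)\,zJ_X(\tau) = g_n + \int_0^1 \exp((1-s)F_n/z)(G/z)\exp(sF_n/z)(zJ_X(\tau))\,ds$. Since $G$ has coefficients in $\mathfrak{m}^{n+1}$ while $F_n$ has coefficients in $\mathfrak{m}$, every occurrence of $\exp(\pm sF_n/z)-\id$ in the integrand contributes an extra factor in $\mathfrak{m}$ when multiplied by the $\mathfrak{m}^{n+1}$-small quantity $G/z$, collapsing the integral modulo $\mathfrak{m}^{n+2}$ to $(G/z)(zJ_X(\tau))=\sum_i H_i\,z\partial_{\tau^i}(zJ_X(\tau))\equiv \sum_i H_i\,z\partial_{\tau^i}g_n\equiv \Delta$.

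The main obstacle will be this Duhamel bookkeeping, particularly the compensating factor of $z$ in the choice $G=\sum_i(zH_i)z\partial_{\tau^i}$ rather than $\sum_i H_i\,z\partial_{\tau^i}$: the identity $z\partial_{\tau^i}(zJ_X(\tau))=z\, M_X(\tau)\phi_i$ carries one extra $z$ relative to $M_X(\tau)\phi_i=z\partial_{\tau^i}J_X(\tau)$, and the $1/z$ in $G/z$ absorbs it, forcing $G_i=zH_i$. Admissibility $G_i\in \C[z]\otimes \mathfrak{m}^{n+1}$ is automatic from the freeness of the tangent space as a $\C[z][\![Q,\tau,y]\!]$-module, and $F_{n+1}|_{Q=y=0}=0$ follows from $G|_{Q=y=0}=0$, so the $\mathfrak{m}$-adic limit $F=\lim_n F_n$ is a well-defined operator of the required form with $\exp(F/z)\,zJ_X(\tau)=f$.
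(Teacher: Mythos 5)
Your argument is a genuinely different route from the paper's. The paper's proof writes $f = zM_X(\sigma)g$ using the explicit parametrization \eqref{eq:cone_as_a_union}, normalizes in two stages (first a $\tau$-coordinate change $\exp(x)$ sending $\sigma\to\tau$, then the operator $\exp(\sum_i w_i z\nabla_{\tau^i})$ sending $g'\to 1$), and finally invokes Baker--Campbell--Hausdorff to combine. You instead run a Newton-type iteration directly on $\cL_X$ using its formal smoothness, constructing $F_n$ so that $\exp(F_n/z)\,zJ_X(\tau)\equiv f$ modulo $\mathfrak{m}^{n+1}$. Both are inductive, and both ultimately rely on the overruled structure of the cone; your version is more abstract deformation theory, the paper's is more hands-on via $M_X$.

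There is, however, a $z$-grading imprecision in your inductive step that deserves flagging. You assert that the derivatives $\{z\partial_{\tau^i}g_n\}$ form a basis of the tangent space $T_{g_n}\cL_X$ and hence that $\Delta\equiv\sum_i H_i\,z\partial_{\tau^i}g_n$ with $H_i\in\mathfrak{m}^{n+1}\C[z][\![Q,\tau,y]\!]$. But the $z\partial_{\tau^i}g_n$ span only the ruling $zT_{g_n}\cL_X\subsetneq T_{g_n}\cL_X$: writing $g_n = zM_X(\sigma_n)h_n$, one has $z\partial_{\tau^i}g_n\in zM_X(\sigma_n)\cH_+ = zT$, whereas the lifting obstruction $\Delta$ lives in $T=M_X(\sigma_n)\cH_+$. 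Concretely, $\Delta\equiv M_X(\sigma_n)\bigl[(\sigma_f-\sigma_n)^j\phi_j\star h_n + z(h_f-h_n)\bigr]$, and the bracketed $\cH_+$-element is not in $z\cH_+$ in general, so in the basis $\{z\partial_{\tau^i}g_n\}$ of $zT$ the coefficients $H_i$ lie in $z^{-1}\C[z][\![Q,\tau,y]\!]$, not $\C[z][\![Q,\tau,y]\!]$. Fortunately your construction is insulated: you set $G_i=zH_i$, which does lie in $\C[z][\![Q,\tau,y]\!]$, so the operator $G=\sum_i G_i\,z\partial_{\tau^i}$ is admissible and $F_{n+1}=F_n+G$ has the required form. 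Thus the conclusion survives, but the intermediate claim ``$H_i\in\C[z]$'' should be replaced by ``$zH_i\in\C[z]$''. (Aside: the formula you quote as Duhamel is not the standard $e^{A+B}-e^A=\int_0^1 e^{s(A+B)}Be^{(1-s)A}\,ds$, but the only fact you use --- that the difference is $\equiv B$ modulo two or more factors of $B$ --- is correct and suffices.)
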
 
\begin{proof} 
Let $f\in \cL_X(\C[\![Q,\tau,y]\!])$ be a miniversal slice. By \eqref{eq:cone_as_a_union}, we have $f = z M_X(\sigma) g$ for some $\sigma\in H^*(X)[\![Q,\tau,y]\!]$, $g\in H^*(X)[z][\![Q,\tau,y]\!]$ with $\sigma|_{Q=\tau=y=0} = 0$, $g|_{Q=\tau=y=0}=1$. The miniversality assumption implies that $\sigma|_{Q=y=0} = \tau$ and $g|_{Q=y=0}= 1$. We claim that there exists a vector field $x \in \sum_{i=0}^s \C[\![Q,\tau,y]\!] \partial_{\tau^i}$ such that 
\[
\exp(x) \sigma = \tau, \qquad x|_{Q=y=0} = 0.
\]
This can be shown easily by induction on powers of $Q$ and $y$ (see \cite[Theorem 3.17]{Ilyashenko-Yakovenko} for a related result). Then we have 
\[
\exp(x) f = \exp(x)\left( z M_X(\sigma) g\right) = zM_X(\tau) g'
\] 
with $g' = \exp(x) g$. We have $g'|_{Q=y=0}=1$. Next we claim that there exist $w_0,\dots,w_s \in \C[z][\![Q,\tau,y]\!]$ such that 
\begin{equation} 
\label{eq:exp_w_1}
\exp\left(\sum_{i=0}^s w_i z \nabla_{\tau^i}\right) g' = 1, \qquad w_i|_{Q=y=0}=0.   
\end{equation} 
We construct $w_i$ by induction on powers of $Q$ and $y$. Take an ample class $\omega$ of $X$, and consider the additive valuation $v\colon \C[\![Q,y]\!]\to \R\cup \{\infty\}$ given by $v(Q^d) = \omega \cdot d$ and $v(y_i) = i$. Suppose by induction that there exist $w_0,\dots,w_s\in \C[z][\![Q,\tau,y]\!]$ such that the equation \eqref{eq:exp_w_1} holds modulo terms with valuation $> v_0$ for some $v_0\in v(\C[\![Q,y]\!])$. Let $v_1>v_0$ be the next possible value of the valuation $v$. Write $\exp(\sum_{i=0}^s w_i z \nabla_{\tau^i}) g' = 1-\sum_{i=0}^s \delta_i \phi_i$ for some $\delta_i\in \C[z][\![Q,\tau,y]\!]$ with $v(\delta_i)\ge v_1$. Using the fact that $z \nabla_{\tau^i} 1 = \phi_i$, we have that 
$\exp\left(\sum_{i=0}^s (w_i +\delta_i) z \nabla_{\tau^i} \right) g'$ equals 1 modulo terms with valuation $> v_1$. This completes the induction and the proof of the claim. Setting $G = \sum_{i=0}^s w_i z\partial_{\tau^i}$, we have 
\begin{align*} 
\exp(G) \exp(x) f & = \exp(G) z M_X(\tau) g' =z M_X(\tau) \exp\left( \sum_{i=0}^s w_i z \nabla_{\tau^i}\right) g' \\
& = z M_X(\tau) 1 = zJ_X(\tau)
\end{align*} 
where we used $\partial_{\tau^i} \circ M_X(\tau) = M_X(\tau) \circ \nabla_{\tau^i}$.  
Since $\sum_{i=0}^s \C[z][\![Q,\tau,y]\!] \partial_{\tau^i}$ is closed under commutator, we can rewrite the left-hand side by the Baker-Campbell-Hausdorff formula and obtain $F \in \sum_{i=0}^s \C[z][\![Q,\tau,y]\!] z\partial_{\tau^i}$ such that $\exp(-F/z) f = zJ_X(\tau)$ and $F|_{Q=y=0}=0$, as claimed. 
\end{proof} 
\subsection{Shift operators}
\label{subsec:shift_operator} Shift operators of equivariant parameters have been introduced by Okounkov-Pandharipande \cite{Okounkov-Pandharipande:Hilbert} and studied in  \cite{Maulik-Okounkov, BMO:Springer,Iritani:shift, LJones:shift_symp, GMP:nil-Hecke}. They can be viewed as lifts of the Seidel representation \cite{Seidel:pi1} on quantum cohomology algebras to quantum $D$-modules. In this section we review basic properties of shift operators in the case of the vector bundle $V \to B$. As before, we assume that $V^\vee$ is generated by global sections and thus $V$ is semi-projective by Lemma \ref{lem:semiprojective}. 

In this section, we work with the $\C$-basis $\{\phi_i\lambda^k\}$ and the associated $\C$-linear coordinates $\{\tau^{i,k}\}$ of $H^*_{S^1}(V)$ as in \S\ref{subsec:vector_bundle}. The shift operator shall be defined over the ring $\C[\![\btau]\!]$ of formal power series in $\btau=\{\tau^{i,k}\}_{0\le i\le s, k\ge 0}$. Since $V$ is semi-projective and the fibrewise scalar $S^1$-action is seminegative\footnote{On the other hand, the inverse $S^1$-action is not seminegative: this is the reason why $\bS(\tau)^{-1}$ is not defined without localization.} (in the sense of \cite[Definition 3.3]{Iritani:shift}), the $S^1$-action defines the shift operator \cite[Definition 3.9]{Iritani:shift} 
\[
\bS(\tau) \colon H^*_{S^1}(V) [z][\![Q,\btau]\!] \to H^*_{S^1}(V)[z][\![Q,\btau]\!]. 
\]
This operator satisfies the following properties: 
\begin{align}
\label{eq:bS_properties} 
\begin{split} 
\bS(\tau) (f(\lambda,z) x) & = f(\lambda-z,z) \bS(\tau) x  \\
[\nabla_{\tau^{i,k}},\bS(\tau)] & = [\nabla_{\xi Q\partial_Q},\bS(\tau)] = [\nabla_{z\partial_z}, \bS(\tau)] = 0 \\
M_V(\tau) \circ \bS(\tau) & = \cS \circ M_V(\tau)
\end{split} 
\end{align} 
where $f(\lambda,z) \in \C[\lambda,z][\![Q,\btau]\!]$, $x\in H^*_{S^1}(V)[z][\![Q,\btau]\!]$, $\xi \in H^2(B)$ and $\cS= e_\lambda(V) e^{-z\partial_\lambda}$ is the shift operator \eqref{eq:cS_introd} on the polynomial Givental space $\cH_{V,\rm pol} = H^*_{S^1}(V) \otimes_{\C[\lambda]}\C[\lambda,z^\pm][\![Q]\!]$. Here $e^{-z\partial_\lambda}$ denotes the operator sending $f(\lambda)$ to $f(\lambda -z)$. 
The third property of \eqref{eq:bS_properties} is in \cite[Theorem 3.14]{Iritani:shift} and the second follows from the third together with the differential equations \eqref{eq:diffeq_M} for $M_V$ and the commutativity 
\[
[\partial_{\tau^{i,k}},\cS] = [\xi Q\partial_Q + z^{-1}\xi,\cS] = [z\partial_z + z^{-1} c_1^{S^1}(TV)+ \mu_V,\cS] = 0.  
\]
The shift operator is compatible with the pairing $P_V$ \eqref{eq:P} in the sense that  
\begin{equation} 
\label{eq:shift_pairing} 
e^{-z\partial_\lambda} P_V(\bS(\tau) f,g) = P_V(f, \bS(\tau) g) 
\end{equation} 
with $f,g \in H^*_{S^1}(V)[z][\![Q,\btau]\!]$. This follows from the fact \eqref{eq:pairing_properties} that $M_V(\tau)$ is isometric with respect to $P_V$, the intertwining property \eqref{eq:bS_properties} between $\bS(\tau)$ and $\cS$ and the identity 
\[
e^{-z\partial_\lambda} P_V(\cS f,g) = P_V(f, \cS g) 
\]
that can be verified easily. The operator $\bS(\tau)$ is invertible over the localization $\C(\lambda,z)[\![Q,\btau]\!]$ since the same holds for $\cS$. The precise image of $\bS(\tau)$ is given in the following remark. 

\begin{remark} 
Using \eqref{eq:pairing_properties} and \eqref{eq:bS_properties}, we have 
\begin{align*} 
P_V(\phi_i, \bS(\tau) \phi_j) = P_V(M_V(\tau) \phi_i, \cS M_V(\tau) \phi_j) = \int_B (M_V(\tau,-z) \phi_i) \cup e^{-z\partial_\lambda} (M_V(\tau,z)\phi_j) 
\end{align*} 
where we write $M_V(\tau,z)$ for $M_V(\tau)$. The left-hand side belongs to $\C[\lambda,\lambda^{-1},z][\![Q,\btau]\!]$ whereas the right-hand side belongs to $\C[\lambda,z,z^{-1}][\![Q,\btau]\!]$ by \eqref{eq:M_V_belongs}. Thus $P_V(\phi_i,\bS(\tau) \phi_j)$ belongs to $\C[\lambda,z][\![Q,\btau]\!]$; they define an invertible matrix because $P_V(\phi_i,\bS(\tau)\phi_j)|_{Q=\tau=0} = \int_B \phi_i \cup \phi_j$. This implies that the image of $\bS(\tau)$ is precisely 
\[
\bS(\tau) (H^*_{S^1}(V)[z][\![Q,\btau]\!])=e_\lambda(V) \cdot  H^*_{S^1}(V)[z][\![Q,\btau]\!]. 
\]
The image of $\bS(\tau)$ is a sub $D$-module since $\bS(\tau)$ commutes with the connection. This fact is related to the quantum Serre \cite[Corollary 9]{Coates-Givental}: the above submodule can be identified with the quantum $D$-module of the $(V^\vee,e_\lambda)$-twisted theory \cite[Theorems 2.11, 3.14]{IMM:QSerre}.
\end{remark} 


\section{Proof of the mirror theorem} 
\label{sec:proof_mirrorthm} 
In this section we prove Theorem \ref{thm:mirrorthm_introd} (and a more general version Theorem \ref{thm:mirrorthm} below). Let $\pi\colon \PP(V)\to B$ be the projective bundle associated with a vector bundle $V\to B$ of rank $r\ge 2$ such that $V^\vee$ is globally generated. Let $p= c_1(\cO(1))$ be the first Chern class of the relative $\cO(1)$ over $\PP(V)$. This class defines a splitting of the natural sequence $0\to \Z \to H_2(\PP(V),\Z)\xrightarrow{\pi_*} H_2(B,\Z) \to 0$ as follows: 
\begin{equation} 
\label{eq:H_2_splitting} 
H_2(\PP(V),\Z) = \Z \oplus \Ker p \cong \Z \oplus H_2(B,\Z). 
\end{equation} 
\begin{notation} 
\label{nota:splitting} 
We write $Q$ for the Novikov variable of the base $B$, and $q$ for the Novikov variable of $\PP(V)$ associated with the class of a line in a fibre. Via the above splitting \eqref{eq:H_2_splitting}, we identify the pair $(q,Q)$ with the Novikov variables for $\PP(V)$; more precisely, a curve class $d \in H_2(\PP(V),\Z)$ corresponds to the monomial $q^{p\cdot d} Q^{\pi_* d}$. The assumption that $V^\vee$ is globally generated implies that the class $p$ is nef; consequently the exponent of $q$ in this monomial is nonnegative whenever $d$ is an effective class. 
\end{notation} 

As before, we consider the fibrewise scalar $S^1$-action on $V$ and denote by $\lambda$ the $S^1$-equivariant parameter. Let $\{\phi_i\}_{i=0}^s$ be a basis of $H^*(B)$. Via the canonical isomorphism $H^*_{S^1}(V) \cong H^*(B) \otimes \C[\lambda]$, $\{\phi_i\}_{i=0}^s$ gives a basis of $H^*_{S^1}(V)$ over $\C[\lambda]$. Let $(\tau^0,\dots,\tau^s) \mapsto \tau=\sum_{i=0}^s \tau^i \phi_i$ be the linear coordinates on  $H^*_{S^1}(V)$ associated with $\{\phi_i\}_{i=0}^s$. 

\subsection{Outline} 
\label{subsec:outline}
Since $V^\vee$ is globally generated, we have a surjection $\cO^{\oplus N} \to V^\vee$ for some $N$. Dualizing, we get an exact sequence of vector bundles over $B$: 
\begin{equation} 
\label{eq:VQ-sequence} 
0 \to V \to \cO^{\oplus N} \to \cQ \to 0.  
\end{equation} 
The inclusion $V \hookrightarrow \cO^{\oplus N}$ induces an embedding $\PP(V) \hookrightarrow B \times \PP^{N-1}$. Since $\cQ$ is a quotient of the trivial bundle $\cO^{\oplus N}$, we have a natural section $s$ of $\cQ(1) = \pi_1^*\cQ \otimes \pi_2^*\cO(1)$ over $B\times \PP^{N-1}$ given by the composition $\pi_2^*\cO(-1) \to \cO_{B\times \PP^{N-1}}^{\oplus N} \to \pi_1^*\cQ$, where $\pi_1\colon B\times \PP^{N-1}\to B$ and $\pi_2\colon B\times \PP^{N-1}\to \PP^{N-1}$ are the projections. The section $s$ is transverse to the zero-section and cuts out $\PP(V)$ as the zero-locus. Clearly, $\cQ(1)$ is generated by global sections and therefore $\cQ(1)$ is convex. 

Our starting point is the fact that Theorem \ref{thm:mirrorthm} holds for the trivial bundle $B\times \C^N \to B$: the $J$-function of $B\times \PP^{N-1}$ at parameter $\tau + t p$ (with $\tau \in H^*(B)$) is given by 
\begin{equation} 
\label{eq:J_BtimesP}
J_{B\times \PP^{N-1}}(\tau,t) = \sum_{k=0}^\infty \frac{e^{pt/z} q^k e^{kt}}{\prod_{c=1}^k (p+cz)^N} J_B(\tau).   
\end{equation}
This can be viewed as a special case of the Elezi-Brown mirror theorem \cite{Brown:toric_fibration}. 
As discussed in \S \ref{subsec:QRR}, since $\cQ(1)$ is convex, the Gromov-Witten invariants of $\PP(V)$ can be computed as the non-equivariant limits of the $(\cQ(1),e_\lambda)$-twisted Gromov-Witten invariants of $B\times \PP^{N-1}$. We shall compute a slice of the Givental cone of the $(\cQ(1),e_\lambda)$-twisted theory and take its non-equivariant limit.

\subsection{Proof of Theorem \ref{thm:mirrorthm_introd}} 
\label{subsec:proof} 
We provide a rigorous treatment based on the $\lambda^{-1}$-adic topology, following the method of Remark \ref{rem:modified_Euler_twist}. 
Throughout this section, $\tau$ is assumed to be a non-equivariant class in $H^*(V)=H^*(B)$ (i.e.~independent of $\lambda$). A generalization to equivariant classes will be discussed in Theorem \ref{thm:mirrorthm}. 

We introduce a modified Quantum Riemann-Roch (QRR) operator: 
\begin{align} 
\label{eq:modified_RR}
\begin{split} 
\Delta^\lambda_W &:= e^{\rank(W) (\lambda \log \lambda -\lambda)/z} \Delta_{(W,e_\lambda^{-1})} \\
& = \prod_\delta e^{\frac{(\lambda + \delta) \log (\lambda+\delta) - (\lambda+\delta)}{z}+\frac{1}{2} \log(\lambda + \delta)+ \sum_{n\ge 2} \frac{B_n}{n(n-1)} \left(\frac{z}{\lambda+\delta}\right)^{n-1}} 
\end{split} 
\end{align} 
where $\Delta_{(W,e_\lambda^{-1})}$ is the QRR operator \eqref{eq:symp_operator} of the inverse Euler twist, and $\delta$ ranges over the Chern roots of $W$. This arises from the asymptotic expansion (Stirling's approximation) of the $\Gamma$-function: 
\[
\Delta^\lambda_W
\sim
\prod_{\delta}\sqrt{\frac{z}{2\pi}} z^{\frac{\lambda+\delta}{z}} \Gamma\left(1+\frac{\lambda+\delta}{z}\right). 
\]
The identity $\Gamma(1+x) = x \Gamma(x)$ implies that 
\begin{equation} 
\label{eq:Delta_difference_eq} 
\Delta_W^{\lambda+kz}/\Delta_W^\lambda = \prod_{c=1}^k \prod_{\delta} (\lambda+ \delta+cz).  
\end{equation} 
Ignoring issues of adic convergence (see Remark \ref{rem:conv_modified_RR}), we may regard the modified operator $\Delta_W^\lambda$ as playing the same role as the original  $\Delta_{(W,e_\lambda^{-1})}$, because $e^{c/z}$ preserves the Givental cone for any constant $c$. 

Let $\te_\lambda(\cdot) = \sum_{i\ge 0} \lambda^{-i} c_i(\cdot)$ be the variant of $e_\lambda$ introduced in Remark \ref{rem:modified_Euler_twist} and let $\tDelta_W^\lambda:=\Delta_{(W,\te_\lambda^{-1})}$ denote the QRR operator \eqref{eq:symp_operator} associated with the  $(W,\te_\lambda^{-1})$-twist. We have $\tDelta_W^\lambda=1+O(\lambda^{-1})$. We also set $\tJ_V^\lambda(\tau) := J_V^\lambda(\tau)|_{Q^d \to Q^d \lambda^{c_1(V) \cdot d}}$; this does not contain positive powers of $\lambda$ and gives the $J$-function of the $(V,\te_\lambda^{-1})$-twisted theory by Remark \ref{rem:modified_Euler_twist}. 
We have 
\begin{equation*} 
\Delta_W^\lambda  =e^{(\rank(W)\theta(\lambda) + c_1(W)\log \lambda)/z} \tDelta_W^\lambda 
\quad \text{with $\theta(\lambda) := \lambda \log \lambda - \lambda + \frac{z}{2} \log \lambda$}.  
\end{equation*} 
Introduce a differential operator $f(\lambda)$ in $\tau$ by 
\[
f(\lambda) := \rank(\cQ) \theta(\lambda)-(\log \lambda) z \partial_{c_1(V)}
\]
where $\partial_{c_1(V)}$ denotes the directional derivative along $c_1(V)$ in the $\tau$-space. We consider the differential operator $f(\lambda+z \partial_t)$ in both $\tau$ and $t$, defined via the Taylor expansion in $z\partial_t$, and define its regularization $f_{\rm reg}(\lambda,z\partial_t)$ by subtracting terms involving $\log \lambda$: 
\begin{align*} 
f_{\rm reg}(\lambda, z\partial_t) & := f(\lambda+ z\partial_t) - f(\lambda) - \rank(\cQ) (\log \lambda) z\partial_t \\
& = \rank(\cQ) (\theta(\lambda+ z\partial_t) - \theta(\lambda) - (\log \lambda) z \partial_t) - \log(1+\lambda^{-1}z\partial_t) z\partial_{c_1(V)}.  
\end{align*} 
This belongs to $\C[z \partial_t, z\partial_{c_1(V)}][\![\lambda^{-1}]\!]$ and satisfies $f_{\rm reg}(\lambda,z\partial_t)|_{\lambda^{-1}=0}=0$.

The exact sequence \eqref{eq:VQ-sequence} shows $\tDelta_\cQ^\lambda \tDelta_V^\lambda =1$. Thus $\tDelta_\cQ^\lambda z \tJ_V^\lambda(\tau)$ is a $\C[\![Q,\tau,\lambda^{-1}]\!]$-valued point of $\cL_B$ by the QRR Theorem \ref{thm:QRR}. This is a miniversal slice, i.e.~$\tDelta_\cQ^\lambda z \tJ_V^\lambda(\tau)|_{Q=\lambda^{-1}=0} = z+\tau+O(z^{-1})$. Therefore, we can apply Lemma \ref{lem:miniversal} to this slice, and obtain a differential operator $\tF(\lambda) \in \sum_{i=0}^s \C[\![Q,\tau,\lambda^{-1}]\!] z\partial_{\tau^i}$ such that $\tF(\lambda)|_{Q=\lambda^{-1}=0} = 0$ and 
\begin{equation} 
\label{eq:tilde_key} 
\tDelta_\cQ^\lambda \tJ_V^\lambda(\tau) = e^{\tF(\lambda)/z} J_B(\tau).  
\end{equation} 
Lemma \ref{lem:flow_on_the_cone} and the QRR Theorem \ref{thm:QRR} imply that 
\[
\tI^\lambda(\tau,t) := (\tDelta_{\cQ(1)}^\lambda)^{-1} e^{f_{\rm reg}(\lambda,z\partial_t)/z} e^{\tF(\lambda+z\partial_t)/z} J_{B\times \PP^{N-1}}(\tau,t)
\]
is a $\C[\![Q,\tau,t,\lambda^{-1}]\!]$-valued point of the $(\cQ(1),\te_\lambda)$-twisted Givental cone of $B\times \PP^{N-1}$. 
We will calculate this element $\tI^\lambda$. First observe the following identities: 
\begin{align}
\label{eq:three_identities}  
\begin{split} 
& \tDelta^\lambda_{\cQ(1)} =  e^{(\rank(\cQ)(\theta(\lambda+p)-\theta(\lambda)-p \log \lambda) +c_1(\cQ) \log(1+\frac{p}{\lambda}))/z}\tDelta^{\lambda+p}_\cQ, \\
& \tDelta_\cQ^{\lambda+kz}/\tDelta_\cQ^{\lambda} = 
e^{-\left(\rank(\cQ)(\theta(\lambda+kz) - \theta(\lambda))+c_1(\cQ)\log(1+\frac{kz}{\lambda})\right)/z} \prod_{c=1}^k \prod_\epsilon (\lambda+\epsilon+cz), \\\
& J_V^{\lambda+x}(\tau)\Bigr|_{Q^d \to Q^d \lambda^{c_1(V)\cdot d}} = e^{-c_1(\cQ) \log(1+\frac{x}{\lambda})/z} e^{- \log (1+ \frac{x}{\lambda}) \partial_{c_1(V)}} 
\tJ_V^{\lambda+x}(\tau)  
\end{split} 
\end{align} 
where $\prod_\epsilon$ is the product over the Chern roots of $\cQ$. 
The first identity follows from $\Delta_{\cQ(1)}^\lambda = \Delta_\cQ^{\lambda+p}$, the second from \eqref{eq:Delta_difference_eq}, and the thrid from the Divisor Equation. 
We have 
\begin{align*} 
\tI^\lambda(\tau,t) & = (\tDelta_{\cQ(1)}^\lambda)^{-1} \sum_{k\ge 0} \frac{e^{pt/z} q^k e^{kt }}{\prod_{c=1}^k (p+cz)^N}
e^{f_{\rm reg}(\lambda,p+kz)/z} e^{\tF(\lambda+p+kz)/z} J_{B}(\tau) && \text{by \eqref{eq:J_BtimesP}} \\ 
& =  \sum_{k\ge 0} \frac{e^{pt/z} q^k e^{kt }}{\prod_{c=1}^k (p+cz)^N}
e^{f_{\rm reg}(\lambda,p+kz)/z} (\tDelta_{\cQ(1)}^\lambda)^{-1} \tDelta^{\lambda+p+kz}_\cQ \tJ^{\lambda+p+kz}_V(\tau) && \text{by \eqref{eq:tilde_key}} \\ 
& = \sum_{k\ge 0} e^{pt/z} q^k e^{kt} 
\frac{\prod_{c=1}^k \prod_\epsilon \left(1+\frac{p+\epsilon+cz}{\lambda} \right)}{\prod_{c=1}^k (p+cz)^N} J_V^{\lambda+p+kz}(\tau)\Bigr|_{Q^d\to Q^d \lambda^{c_1(V)\cdot d}} && \text{by \eqref{eq:three_identities}.} 
\end{align*} 
By the change of variables $Q^d \to Q^d \lambda^{c_1(\cQ)\cdot d}=Q^d \lambda^{-c_1(V)\cdot d}$, $q^k \to q^k \lambda^{\rank(\cQ) k}$ for $\tI^\lambda$, we obtain a point $I^\lambda$ of the $(\cQ(1),e_\lambda)$-twisted Givental cone by Remark \ref{rem:modified_Euler_twist}. It is given by 
\[
I^\lambda(\tau,t) = \sum_{k\ge 0} e^{tp/z} q^k e^{kt} 
\frac{\prod_{c=1}^k \prod_\epsilon (\lambda+p+ \epsilon + cz)}{\prod_{c=1}^k (p+cz)^N}  
J_V^{\lambda+p+kz}(\tau). 
\]
The non-equivariant limit $\lim_{\lambda \to 0} i^*I^\lambda$ exists and gives the $I$-function $I_{\PP(V)}$ in Theorem \ref{thm:mirrorthm_introd}. As discussed in \S\ref{subsec:QRR}, $I_{\PP(V)}$ lies on the Givental cone of $\PP(V)$ . The proof of Theorem \ref{thm:mirrorthm_introd} is now complete. 

\begin{remark} 
The differential operator $f(\lambda)$ appearing in the proof above is chosen to satisfy the identity $\Delta_\cQ^\lambda J_V^\lambda(\tau) = e^{f(\lambda)/z}\tDelta_\cQ^\lambda \tJ_V^\lambda(\tau)$, which leads to 
\[
\Delta_\cQ^\lambda J_V^\lambda(\tau) = e^{f(\lambda)/z} e^{\tF(\lambda)/z} J_B(\tau). 
\] 
Conversely, if one chooses a differential operator $F(\lambda) = F(\lambda;\tau,z\partial_\tau,z)$ in $\tau$ such that $\Delta_\cQ^\lambda J_V^\lambda(\tau) = e^{F(\lambda)/z} J_B(\tau)$, then a straightforward calculation using \eqref{eq:Delta_difference_eq} shows that the element $(\Delta_{\cQ(1)}^\lambda)^{-1} e^{F(\lambda+z\partial_t)/z} J_{B\times \PP^{N-1}}(\tau,t)$ yields the point $I^\lambda(\tau,t)$ defined above. This point is expected to lie on the $(\cQ(1),e_\lambda)$-twisted cone of $B\times \PP^{N-1}$, and the preceding construction makes this heuristic rigorous via the $\lambda^{-1}$-adic topology.
\end{remark}

\begin{remark} 
\label{rem:conv_modified_RR} 
We note that the modified QRR operator \eqref{eq:modified_RR} arising from the Stirling approximation does not converge in an adic topology, because the exponent contains\footnote{This also holds for the original operator $\Delta_{(W,e_\lambda^{-1})}$; however this is not an issue because the coefficient of $z^{-1}$ in $\log (\Delta_{(W,e_\lambda^{-1})})$ is nilpotent.} both positive and negative powers of $z$. This converges in the standard topology to power series in $z$ and $z^{-1}$ (infinite in both directions) with coefficients in analytic functions of $\lambda$ near $\lambda =e$, but we are not sure whether this fact would be useful. The properties of $\Delta_W^\lambda$ we used in the proof (e.g.~\eqref{eq:Delta_difference_eq}) can be interpreted in terms of the well-defined logarithm $\log(\Delta^\lambda_W) \in z^{-1}H^*(B)\otimes \C[\lambda, \lambda^{-1},\log \lambda][\![z]\!]$ and the convergence issue does not exist.  
\end{remark}

\subsection{A generalization} 
In this section we observe that the $J$-function $J_V^\lambda$ in Theorem \ref{thm:mirrorthm_introd} can be replaced with any other slices of $\cL_V$ depending polynomially on $\lambda$. 
\begin{theorem}
\label{thm:mirrorthm} 
Let $V\to B$ as in Theorem \ref{thm:mirrorthm_introd}. 
Let $x=(x_1,x_2,\dots)$ be a set of formal parameters and let $z I_V^\lambda$ be a $\C[\lambda][\![Q,x]\!]$-valued point of the Givental cone $\cL_V$. Define the $H^*(\PP(V))$-valued function $I_{\PP(V)}$ by  
\[
I_{\PP(V)} = \sum_{k=0}^\infty 
\frac{e^{pt/z} q^k e^{kt} }{\prod_{c=1}^k \prod_{\substack{\text{\rm $\delta$: Chern roots} \\ \text{\rm of $V$}}} (p+ \delta + cz)} I_V^{p+kz}.    
\]
Then $z I_{\PP(V)}$ lies in the Givental cone $\cL_{\PP(V)}$ of $\PP(V)$.
\end{theorem}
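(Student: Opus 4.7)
The plan is to generalize the proof of Theorem \ref{thm:mirrorthm_introd} given in Section \ref{subsec:rigorous}, replacing the specific slice $zJ_V^\lambda(\tau)$ by the arbitrary $\C[\lambda][\![Q,x]\!]$-valued point $zI_V^\lambda$ on $\cL_V$. The overall outline---apply QRR with the exact sequence \eqref{eq:VQ-sequence} to land on $\cL_B$; express the image as the action of a differential operator on a reference point; lift via $\lambda\mapsto\lambda+z\partial_t$ and apply to a matching point on $\cL_{B\times\PP^{N-1}}$; invert the QRR for the convex bundle $\cQ(1)$; and take the non-equivariant limit after pullback via $i\colon\PP(V)\hookrightarrow B\times\PP^{N-1}$---carries over essentially unchanged.

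The essential step requiring modification is the application of Lemma \ref{lem:miniversal}, which in the original proof produced the identity $\tDelta_\cQ^\lambda z\tJ_V^\lambda(\tau)=e^{\tF(\lambda)/z}zJ_B(\tau)$ by exploiting miniversality of the $J$-function slice. To adapt this, I would first use the overruled property \eqref{eq:cone_as_a_union} to write $zI_V^\lambda=zM_V(\sigma(\lambda))g(\lambda)$ for some $\sigma(\lambda)\in H^*_{S^1}(V)[\![Q,x]\!]$ and $g(\lambda)\in\cH_+[\![Q,x]\!]$, then construct an augmented $\C[\lambda][\![Q,x,\tau']\!]$-valued point $z\widehat{I}_V^\lambda(\tau')$ on $\cL_V$, parameterized by an auxiliary $\tau'\in H^*(B)$ together with finitely many descendant directions (to absorb the $z$-positive part of $g(\lambda)|_{Q=x=0}$), which is miniversal in $\tau'$ and recovers $zI_V^\lambda$ at a suitable specialization $\tau'=\tau'_*(\lambda)$. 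Applying QRR and Lemma \ref{lem:miniversal} to this augmented slice yields
\[
\tDelta_\cQ^\lambda z\widehat{I}_V^\lambda(\tau')=e^{\tF(\lambda,\tau')/z}zJ_B(\tau')
\]
with $\tF(\lambda,\tau')$ in $\sum_i\C[z][\![Q,x,\tau',\lambda^{-1}]\!]z\partial_{\tau'^i}$ and $\tF|_{Q=\lambda^{-1}=0}=0$.

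With this identity in hand, the rest of Section \ref{subsec:rigorous} applies almost verbatim: substituting $\lambda\mapsto\lambda+z\partial_t$ (and $\tau'\mapsto\tau'_*(\lambda+z\partial_t)$) in $\tF$ and combining with the factor $e^{f_{\rm reg}(\lambda,z\partial_t)/z}$ from \eqref{eq:Delta_J_tDelta_tJ}, the resulting operator acts on the lift $zJ_{B\times\PP^{N-1}}(\tau'_*(\lambda+z\partial_t),t)$ (which lies in $\cL_{B\times\PP^{N-1}}$). Inverting $\tDelta_{\cQ(1)}^\lambda$, undoing the tildes via $Q^d\to Q^d\lambda^{-c_1(V)\cdot d}$, $q^k\to q^k\lambda^{\rank(\cQ)k}$, and taking the non-equivariant limit $\lambda\to 0$ pulled back to $\PP(V)$ yield, by a direct calculation paralleling \eqref{eq:I^lambda} and using the same $\tDelta$-ratio identities, the formula for $zI_{\PP(V)}$ in the statement.

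The main obstacle is the careful construction of the auxiliary miniversal family $z\widehat{I}_V^\lambda(\tau')$ and the control of the resulting $\tF$: one must verify that $\tF$ lies in the appropriate $\lambda^{-1}$-adic ring so that the substitution $\lambda\mapsto\lambda+z\partial_t$ produces a well-defined differential operator on the polynomial Givental space $\cH_{B\times\PP^{N-1},\rm pol}[\![Q,x,t,\tau']\!]$, and that the specialization $\tau'=\tau'_*(\lambda+z\partial_t)$ commutes appropriately with this substitution.
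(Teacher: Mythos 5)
Your plan is to re-run the QRR argument of \S\ref{subsec:rigorous} with $I_V^\lambda$ in place of $J_V^\lambda$, manufacturing an auxiliary miniversal family along the way. This is a genuinely different route from what the paper does, and it has a gap that I think is fatal in the form you describe.

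The paper never re-runs the QRR machinery for Theorem~\ref{thm:mirrorthm}. Instead it treats Theorem~\ref{thm:mirrorthm_introd} as established and bootstraps from it formally: write $I_V^\lambda = M_V(\tau(\lambda))f(\lambda)$ via \eqref{eq:cone_as_a_union} with $\tau(\lambda)\in H^*(B)[\lambda][\![Q,x]\!]$ and $f(\lambda)\in H^*(B)[\lambda,z][\![Q,x]\!]$, decompose $\tau(\lambda)=\tau(0)+\delta(\lambda)$, and observe that the $I_{\PP(V)}$ of Theorem~\ref{thm:mirrorthm} is precisely what one gets by applying the operators $f^i(z\partial_t)$, $e^{\sum_j\delta^j(z\partial_t)\partial_{\tau^j}}$ to the $I_{\PP(V)}(\tau,t)$ of Theorem~\ref{thm:mirrorthm_introd} and then specializing $\tau=\tau(0)$ (because substituting $\lambda=p+kz$ into a polynomial in $\lambda$ is the same as acting by that polynomial in $z\partial_t$ on $e^{pt/z}q^ke^{kt}$). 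Lemma~\ref{lem:flow_on_the_cone} shows the exponential factor preserves $\cL_{\PP(V)}$, and the overruled property disposes of the remaining multiplication by $f^i(z\partial_t)$. That is the whole proof; no new QRR, no miniversality lemma.

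The gap in your proposal is the $\lambda$-power structure. Lemma~\ref{lem:miniversal} requires a $\C[\![Q,\tau',y]\!]$-valued point of $\cL_B$, i.e.\ a formal power series in the extra parameters $y$, and in \S\ref{subsec:rigorous} the relevant extra parameter is $\lambda^{-1}$. The tilde twist makes $\tJ_V^\lambda(\tau)$ a genuine $\lambda^{-1}$-power series because $J_V^\lambda(\tau)$ with $\tau\in H^*(B)$ has this special structure. But your augmented family $z\widehat{I}_V^\lambda(\tau')$ built from $\sigma(\lambda)$ and $g(\lambda)$ necessarily carries positive powers of $\lambda$ coming from $\sigma(\lambda)\in H^*(B)[\lambda][\![Q,x]\!]$ and $g(\lambda)\in H^*(B)[\lambda,z][\![Q,x]\!]$, and these do not go away under $Q^d\to Q^d\lambda^{c_1(V)\cdot d}$. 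So there is no ring $\C[\![Q,\tau',\lambda^{-1},x]\!]$ in which $\tDelta^\lambda_\cQ z\widetilde{\widehat{I}}_V^\lambda(\tau')$ lives, and Lemma~\ref{lem:miniversal} does not apply. The natural way to handle those positive $\lambda$-powers is to trade $\lambda$ for $z\partial_t$ acting on the already-established $I$-function --- which is exactly the paper's bootstrap, so that fix collapses your approach into theirs. A smaller point: your remark about ``absorbing the $z$-positive part of $g(\lambda)|_{Q=x=0}$'' and ``finitely many descendant directions'' reflects a misconception --- for a $\C[\![Q,x]\!]$-valued point of the cone one has $g(\lambda)|_{Q=x=0}=1$ by definition, and the family $M_V(\sigma(\lambda)+\tau')g(\lambda)$ (with the specialization $\tau'_*\equiv 0$, not some nontrivial $\tau'_*(\lambda)$) is already miniversal in $\tau'$ at $Q=x=0$; the descendant directions are not needed and the operator-substitution $\tau'=\tau'_*(\lambda+z\partial_t)$ you worry about does not arise.
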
 
\begin{proof} 
We can write $I_V^\lambda= M_V(\tau(\lambda)) f(\lambda)$ for some $\tau(\lambda) \in H^*(B)[\lambda][\![Q,x]\!]$ and $f(\lambda)\in H^*(B)[\lambda,z][\![Q,x]\!]$ with $\tau(\lambda)|_{Q=x=0} = 0$ and $f(\lambda)|_{Q=x=0}=1$. We may assume that the basis $\{\phi_i\}_{i=0}^s$ is chosen so that $\phi_0=1$ and write $f(\lambda)=\sum_{i=0}^s f^i(\lambda) \phi_i$. We have 
\[
I_V^\lambda = \sum_{i=0}^s M_V(\tau(\lambda)) f^i(\lambda) \phi_i 
= f^0(\lambda) J_V^\lambda(\tau(\lambda)) + \sum_{i=1}^s f^i(\lambda) (z\partial_{\tau^i}  J_V^\lambda)(\tau(\lambda))
\] 
with $f^0(\lambda)|_{Q=x=0} = 1$, $f^i(\lambda)|_{Q=x=0} = 0$ for $i\ge 1$. 
Writing $\tau(\lambda) = \tau(0) + \delta(\lambda)$, $\delta(\lambda) = \sum_{j=0}^s \delta^j(\lambda) \phi_j$, we have 
\[
I_V^\lambda = f^0(\lambda) \left( e^{\sum_{j=0}^s \delta^j(\lambda) \partial_{\tau^j}} J^\lambda_V\right) (\tau(0)) + \sum_{i=1}^s f^i(\lambda)\left( z\partial_{\tau^i} e^{\sum_{j=0}^s \delta^j(\lambda) \partial_{\tau^j}} J_V^\lambda\right)(\tau(0)) 
\] 
If we write $I_{\PP(V)}(\tau,t)$ for the $I$-function in Theorem \ref{thm:mirrorthm_introd}, the $I$-function in this Theorem \ref{thm:mirrorthm} can be written as 
\begin{align*} 
f^0(z\partial_t) \left( e^{\sum_{j=0}^s \delta^j(z\partial_t) \partial_{\tau^j}}I_{\PP(V)}\right) (\tau(0),t) + \sum_{i=1}^s f^i(z\partial_t) \left( z\partial_{\tau^i}e^{\sum_{j=0}^s \delta^j(z\partial_t) \partial_{\tau^j}} I_{\PP(V)}\right) (\tau(0),t). 
\end{align*} 
We have that $g(\tau,t) :=e^{\sum_{j=0}^s \delta^j(z\partial_t) \partial_{\tau^j}} zI_{\PP(V)}(\tau,t)$ is a $\C[\![Q,\tau,t,x]\!]$-valued point of the Givental cone $\cL_{\PP(V)}$ by Lemma \ref{lem:flow_on_the_cone}. Writing $T$ for the tangent space of $\cL_{\PP(V)}$ at $g$, we have from the overruled property that 
\[
h(\tau,t)=(f^0(z\partial_t)-1) g(\tau,t)+ \sum_{i=1}^s f^i(z\partial_t) z\partial_{\tau^i}g(\tau,t)
\]
gives an element of $zT$ that vanishes at $Q=x=0$; hence $g(\tau,t) + h(\tau,t)$ lies in the cone $\cL_{\PP(V)}$. By specializing $\tau$ to $\tau(0)$ in $g(\tau,t)+h(\tau,t)$, we obtain the $I$-function multiplied by $z$ as a $\C[\![Q,t,x]\!]$-valued point of $\cL_{\PP(V)}$. 
\end{proof}
\begin{example} 
\label{exa:split_case} 
When $V$ is split, the $I$-function $I_{\PP(V)}$ coincides with the Elezi-Brown $I$-function. Suppose that $V = L_1\oplus \cdots \oplus L_r$ with $L_i$ a line bundle over $B$. By the assumption that $V^\vee$ is generated by global sections, each $L_i^{-1}$ is nef. Let 
\[
J_B(\tau) = \sum_{d\in \Eff(B)} J_d(\tau) Q^d 
\]
be the $J$-function of $B$. The hypergeometric modification 
\[
I_V^\lambda = \sum_{d\in \Eff(B)} \left(\prod_{i=1}^r \prod_{c=0}^{-c_1(L_i)\cdot d-1} (\lambda+c_1(L_i) - cz) \right) J_d(\tau) Q^d  
\]
then lies in $\cL_V$ by the quantum Lefschetz theorem \cite{Coates-Givental}. Applying Theorem \ref{thm:mirrorthm} to this $I_V^\lambda$, we recover the Elezi-Brown $I$-function 
\[
I_{\PP(V)} = \sum_{k\ge 0} \sum_{d\in \Eff(B)} e^{pt/z} q^k e^{kt} \left(\prod_{i=1}^r \frac{\prod_{c=-\infty}^{0 \phantom{+c_1(L_i)\cdot d}} (c_1(L_i) + p + cz)}{\prod_{c=-\infty}^{k+ c_1(L_i) \cdot d}(c_1(L_i) + p + cz)} \right) J_d(\tau) Q^d.
\] 
\end{example} 
\begin{example}
\label{exa:cotangent_Pn}
Let $B=\PP^n$ with $n\ge 2$ and consider the cotangent bundle $T^\vee_{\PP^n}$. We compute the $I$-function of $\PP(T^\vee_{\PP^n})$ by using Theorem \ref{thm:mirrorthm}.
The Euler sequence
\begin{equation*}
0 \to T^\vee_{\PP^n}(1) \to \cO^{\oplus(n+1)} \to \cO(1) \to 0,
\end{equation*}
implies that $T^\vee_{\PP^n}(1)$ satisfies the assumption in Theorem \ref{thm:mirrorthm_introd}.
It also implies that $\Delta_{T^\vee_{\PP^n}(1)}^\lambda \Delta_{\cO(1)}^\lambda=\Delta_{\cO^{\oplus(n+1)}}^\lambda$. Since $\Delta_{\cO^{\oplus(n+1)}}^\lambda$ preserves the Givental cone $\cL_{\PP^n}$, we have 
\[
\cL_{\PP^n,(T^\vee_{\PP^n}(1),e_\lambda^{-1})}=\cL_{\PP^n,(\cO(1),e_\lambda)}.
\]
Therefore, by the quantum Lefschetz theorem \cite{Coates-Givental} for the $(\cO(1),e_\lambda)$-twist, 
\[
I_{T^\vee_{\PP^n}(1)}^\lambda=\sum_{d=0}^\infty e^{h\tau/z}Q^d e^{d\tau} \frac{\prod_{c=1}^d (h+\lambda+cz)}{\prod_{c=1}^d (h+cz)^{n+1}}
\] 
lies in the cone of $T^\vee_{\PP^n}(1)$, where $h\in H^2(B,\Z)$ is the ample generator. Applying Theorem \ref{thm:mirrorthm} to $I_{T^\vee_{\PP^n}(1)}^\lambda$, we obtain the $I$-function for $\PP(T^\vee_{\PP^n}) = \PP(T^\vee_{\PP^n}(1))$. 
\[
I_{\PP(T^\vee_{\PP^n})}=\sum_{k=0}^\infty \sum_{d=0}^\infty e^{(pt+h\tau)/z} q^k e^{kt} Q^d e^{d\tau} 
\frac{\prod_{c=1}^{k+d} (p+h+cz)}{\prod_{c=1}^k (p+cz)^{n+1} \prod_{c=1}^d (h+cz)^{n+1}}.
\]
We can obtain this $I$-function in a different way. The space $\PP(T^\vee_{\PP^n})$ is isomorphic to the 2-step flag variety $\Fl(1,n-1;n)$, which can be embedded to $\PP^n \times \PP^n$ as a degree $(1,1)$-hypersurface. The $I$-function $I_{\PP(T^\vee_{\PP^n})}$ can be also obtained as a hypergeometric modification of the $J$-function of $\PP^n\times\PP^n$. 
\end{example}

\section{Fourier duality between quanum $D$-modules} 
\label{sec:Fourier_QDM} 
In this section, we restate the mirror theorem (Theorem \ref{thm:mirrorthm}) as a Fourier duality between the equivariant quantum $D$-module $\QDM_{S^1}(V)$ of $V$ and the quantum $D$-module $\QDM(\PP(V))$ of $\PP(V)$. More precisely, we construct an isomorphism between $\QDM_{S^1}(V)$ and $\htau^* \QDM(\PP(V))$ that intertwines the shift operator $\bS(\tau)$ with the Novikov variable $q$ and the equivariant parameter $\lambda$ with the quantum connection $z (\htau^*\nabla)_{q\partial_q}$. Here $\htau \colon H^*_{S^1}(V) \to H^*(\PP(V))$ is a `mirror map' that equals the Kirwan map modulo higher order terms in $q,Q$. We continue to assume that the dual bundle $V^\vee$ is globally generated. 

\subsection{Quantum $D$-modules}
We fix notation and definitions concerning the quantum $D$-modules of $\PP(V)$ and $V$. 
\subsubsection{Quantum $D$-module of $\PP(V)$} 
\label{subsubsec:QDM_PP(V)}
Let $\htau\in H^*(\PP(V))$ denote the bulk-deformation parameter of the quantum cohomology of $\PP(V)$. Following Notation \ref{nota:splitting}, we write $(q,Q)$ for the Novikov variables of $\PP(V)$, which are dual to the decomposition $H^2(\PP(V),\Z) = \Z p \oplus H^2(B,\Z)$. The \emph{quantum $D$-module} of $\PP(V)$ is the module 
\[
\QDM(\PP(V)) = H^*(\PP(V)) \otimes \C[z,q][\![Q,\htau]\!] 
\] 
equipped with the quantum connection (see \S\ref{subsec:qconn_fundsol})
\[
\nabla_{\htau^i}, \ \nabla_{q\partial_q}, \ \nabla_{\xi Q\partial_Q}, \nabla_{z\partial_z}
\colon \QDM(\PP(V)) \to z^{-1} \QDM(\PP(V)) 
\]
where $\xi \in H^2(B) \subset H^2(\PP(V))$ and the flat $z$-sesquilinear pairing \eqref{eq:P} 
\begin{align*} 
& P_{\PP(V)} \colon \QDM(\PP(V)) \times \QDM(\PP(V)) \to \C[z,q][\![Q,\htau]\!] 
\end{align*} 
induced by the Poincar\'e pairing of $\PP(V)$. We note that $\nabla_{q\partial_q}=q\partial_q + z^{-1} (p\star_{\htau})$. We also note that we defined $\QDM(\PP(V))$ over $\C[z,q][\![Q,\htau]\!]$ instead of over $\C[z][\![q,Q,\htau]\!]$: it is possible because of the homogeneity in Gromov-Witten theory and the fact that both $z$ and $q$ have positive degrees. 

\subsubsection{Quantum $D$-module of $V$} 
\label{subsubsec:QDM_V} 
Let $\tau\in H^*_{S^1}(V)$ denote the bulk-deformation parameter of the equivariant quantum cohomology of $V$. Let $\{\phi_i\}_{i=0}^s$ be a homogeneous basis of $H^*(B)$ as before; here we assume that $\phi_0 = 1$. Then $\{\phi_i \lambda^k\}_{0\le i\le s, k\ge 0}$ gives a $\C$-basis of $H^*_{S^1}(V)$. We write $\tau = \sum_{i=0}^s \sum_{k=0}^{\infty} \tau^{i,k} \phi_i \lambda^k$, where the $\tau^{i,k}$ are $\C$-valued coordinates, and we use $\btau$ to denote the infinite set $\{\tau^{i,k}\}_{0\le i\le s, k\ge 0}$ of coordinates. The \emph{equivariant quantum $D$-module} of $V$ is the module 
\[
\QDM_{S^1}(V) = H^*_{S^1}(V) \otimes_{\C[\lambda]} \C[\lambda,z][\![Q,\btau]\!] = H^*(B) \otimes \C[\lambda,z][\![Q,\btau]\!]
\]
equipped with the quantum connection (see \S\ref{subsec:vector_bundle}) 
\[
\nabla_{\tau^{i,k}}, \ \nabla_{\xi Q\partial_Q}, \ \nabla_{z\partial_z} \colon \QDM_{S^1}(V) \to z^{-1} \QDM_{S^1}(V)
\]
where $\xi \in H^2(B) = H^2(V)$, the shift operator $\bS(\tau) \colon \QDM_{S^1}(V) \to \QDM_{S^1}(V)$  (see \S\ref{subsec:shift_operator}) and the flat $z$-sesquilinear pairing \eqref{eq:P}
\begin{align*} 
& P_V \colon \QDM_{S^1}(V) \times \QDM_{S^1}(V)  \to \C(\lambda,z)[\![Q,\btau]\!] 
\end{align*} 
induced by the Poincar\'e pairing \eqref{eq:pairing_V} of $V$. 

\subsection{Fourier transformation for quantum $D$-modules} 
Recall the Fourier transformation \eqref{eq:Fourier_introd} on the level of the Givental spaces from the introduction: 
\begin{align*} 
\widehat{\phantom{A}} \colon & \cH_{V,\rm pol} \to \cH_{\PP(V),\rm pol} \\ 
& J(\lambda) \longmapsto \hJ(q) = \sum_{k\in \Z} \kappa(\cS^{-k} J) q^k = \sum_{k=0}^\infty \frac{J(p+kz)}{\prod_{c=1}^k e_{p+cz}(V)} q^k, 
\end{align*} 
where $\cH_{V,\rm pol}$, $\cH_{\PP(V), \rm pol}$ are the polynomial Givental spaces given in  \eqref{eq:H_V_pol}, \eqref{eq:H_X_pol}. (We define $\cH_{\PP(V),\rm pol}$ over the completed Novikov ring $\C[\![q,Q]\!]$.)  
\begin{example}[cf.~\eqref{eq:Fourier_J-function}] 
Let $\tau_{\rm ne} = \sum_{i=0}^s \tau^{i,0} \phi_i\in H^*(V)$ denote the non-equivariant class. The Fourier transform of the $J$-function $J_V(\tau_{\rm ne} + \tau^{0,1} \lambda \phi_0)=e^{\lambda \tau^{0,1}/z} J_V(\tau_{\rm ne})$ gives the $I$-function in Theorem \ref{thm:mirrorthm_introd} with $t= \tau^{0,1}$. 
\[
J_V(\tau_{\rm ne}+\tau^{0,1}\lambda \phi_0)\sphat = I_{\PP(V)}(\tau_{\rm ne},\tau^{0,1}). 
\]
More generally, we obtain  
$J_V(\textstyle\sum_{i,k} \tau^{i,k}\phi_i \lambda^k)\sphat = 
e^{\sum_{k\ge 1, (i,k)\neq (0,1)} \tau^{i,k}(zq\partial_q + p)^k \partial_{\tau^i}}
I_{\PP(V)}(\tau_{\rm ne},\tau^{0,1})$
by using \eqref{eq:Fourier_property}.  
\end{example}
We show that the Fourier transformation induces an isomorphism between the quantum $D$-modules of $V$ and $\PP(V)$. 
\begin{theorem}  
\label{thm:Fourier_QDM} 
Let $V\to B$ be as in Theorem $\ref{thm:mirrorthm_introd}$. There exist a mirror map $\htau = \htau(\tau) \in H^*(\PP(V))[q][\![Q,\btau]\!]$ and an isomorphism $\FT\colon \QDM_{S^1}(V) \to \htau^* \QDM(\PP(V))$ of $\C[z][\![Q,\btau]\!]$-modules, such that by defining the pull-back connection $\nabla' = \htau^*\nabla$ via the standard formulae 
\begin{align*} 
\nabla'_{\tau^{i,k}} & = \partial_{\tau^{i,k}} + z^{-1}(\partial_{\tau^{i,k}}\htau)\star_{\htau(\tau)}, \\ 
\nabla'_{q\partial_q} & = q \partial_q + z^{-1} (p\star_{\htau(\tau)}+ (q\partial_q \htau) \star_{\htau(\tau)}), \\ 
\nabla'_{\xi Q\partial_Q} & = \xi Q \partial_Q + z^{-1}(\xi\star_{\htau(\tau)} + (\xi Q\partial_Q \htau)\star_{\htau(\tau)}), \\ 
\nabla'_{z\partial_z} &= z\partial_z - z^{-1} (E_{\PP(V)}\star_{\htau(\tau)})+ \mu_{\PP(V)}, 
\end{align*}
the following properties hold: 
\begin{itemize}
\item[(1)] $\FT$ intertwines $\bS(\tau)$ with $q$; 
\item[(2)] $\FT$ intertwines $\lambda$ with $z \nabla'_{q\partial_q}$;  
\item[(3)] $\FT$ intertwines $\nabla_{\tau^{i,k}}$ with $\nabla'_{\tau^{i,k}}$; 
\item[(4)] $\FT$ intertwines $\nabla_{\xi Q\partial_Q}$ with $\nabla'_{\xi Q\partial_Q}$ for $\xi \in H^2(B)$; 
\item[(5)] $\FT$ intertwines $\nabla_{z\partial_z}+\frac{1}{2}$ with $\nabla'_{z\partial_z}$;  
\item[(6)] $\htau(\tau)$ is homogeneous of degree $2$ and $\FT$ is homogeneous of degree $0$ with respect to the usual grading on $H^*_{S^1}(V)$, $H^*(\PP(V))$ and the following degrees of the variables: $\deg \tau^{i,k} = 2 - \deg \phi_i - 2k$, $\deg Q^d = 2c_1(TV) \cdot d = 2(c_1(TB)+c_1(V))\cdot d$, $\deg q = 2r$ and $\deg z = 2$;  
\item[(7)]  $\htau(\tau)|_{q=Q=0} = \kappa(\tau)$ and $\htau(\tau)|_{Q=0,\tau^{*,r}=\tau^{*,r+1}=\cdots = 0} \equiv \kappa(\tau)$ modulo $(\tau^{*,\le r-1})^2$, where $(\tau^{*,\le r-1})$ denotes the ideal generated by $\tau^{i,k}$ with $0\le i\le s$, $k\le r-1$; 
\item[(8)] $\FT(\phi_i \lambda^k)|_{Q=\tau=0} = \phi_i p^k$ for $0\le k\le r-1$;   
\item[(9)]  $(M_V(\tau) s)\sphat = M_{\PP(V)}(\htau(\tau)) \FT(s)$, i.e.~the following diagram commutes:
\[
\xymatrix{\QDM_{S^1}(V) \ar[r]^{\FT \quad } \ar[d]_{M_V(\tau)} & \htau^*\QDM(\PP(V)) \ar[d]^{M_{\PP(V)}(\htau(\tau))} \\ 
\cH_{V,\rm pol} \ar[r]^{\widehat{\phantom{aa}}} & \cH_{\PP(V),\rm pol}. 
} 
\]
\end{itemize} 
\end{theorem}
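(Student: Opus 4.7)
The plan is to construct the mirror map $\htau(\tau)$ and the isomorphism $\FT$ as direct consequences of the mirror theorem, and then to verify properties (1)--(9) one at a time using the differential equations \eqref{eq:diffeq_M} together with the intertwining relations \eqref{eq:Fourier_property}.

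First I would apply Theorem \ref{thm:mirrorthm} to the slice $I_V^\lambda = J_V^\lambda(\tau)$, concluding that $z\widehat{J_V(\tau)}$ is a $\C[\![q, Q, \btau]\!]$-valued point of $\cL_{\PP(V)}$. Combining this with the overruled description \eqref{eq:cone_as_a_union} and the normalisation $\widehat{J_V(\tau)}|_{q=Q=\btau=0} = 1$, I obtain a unique decomposition $z\widehat{J_V(\tau)} = z M_{\PP(V)}(\htau(\tau)) f(\tau)$ with $\htau(\tau) \in H^*(\PP(V))[\![q,Q,\btau]\!]$ and $f(\tau) \in H^*(\PP(V))[z][\![q,Q,\btau]\!]$, normalised so that $\htau(\tau)|_{q=Q=\btau=0} = 0$ and $f(\tau)|_{q=Q=\btau=0} = 1$. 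This defines the mirror map. For any $s \in \QDM_{S^1}(V)$, the element $M_V(\tau)s$ is tangent to $\cL_V$ at $zJ_V(\tau)$; since Fourier transformation is $\C[z][\![Q,\btau]\!]$-linear and commutes with $\partial_{\tau^{i,k}}$ and $Q\partial_Q$, its image $(M_V(\tau)s)\sphat$ is tangent to $\cL_{\PP(V)}$ at $z\widehat{J_V(\tau)}$, and by the overruled property this forces $(M_V(\tau)s)\sphat = M_{\PP(V)}(\htau(\tau))\FT(s)$ for a unique $\FT(s) \in H^*(\PP(V))\otimes \C[z][\![q,Q,\btau]\!]$. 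I take this as the definition of $\FT$, making property (9) tautological.

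Properties (1)--(4) should follow by straightforward manipulation. Property (1) is obtained from $(M_V(\tau)\bS(\tau)s)\sphat = (\cS M_V(\tau)s)\sphat = q(M_V(\tau)s)\sphat$, using the third line of \eqref{eq:bS_properties} and then \eqref{eq:Fourier_property}. Property (2) uses $(\lambda M_V(\tau)s)\sphat = (zq\partial_q + p)(M_V(\tau)s)\sphat$ together with the $q$-Novikov line of \eqref{eq:diffeq_M} for $M_{\PP(V)}$, noting that $p$ is dual to the $q$-direction. Properties (3) and (4) come from the commutation of Fourier transformation with $\partial_{\tau^{i,k}}$ and $\xi Q\partial_Q$, combined with the corresponding lines of \eqref{eq:diffeq_M} pulled back through $\htau$. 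For the grading property (6) I would assign $\deg q = 2r$ so that the summand $q^k/\prod_{c=1}^k e_{p+cz}(V)$ of $\widehat{J_V}$ is homogeneous of degree zero; Fourier transformation then preserves total degree, and uniqueness in the construction forces $\htau(\tau)$ and $\FT$ to be homogeneous of the stated degrees. The polynomiality of $\htau(\tau)$ in $q$ follows from homogeneity and $\deg q > 0$. Property (7) comes from setting $q=Q=0$, where only the $k=0$ summand of $\widehat{J_V}$ survives and the String Equation gives $\widehat{J_V(\tau)}|_{q=Q=0} = e^{\kappa(\tau)/z}$; the second assertion of (7) is a linear-order computation in $\btau$ using $\kappa(\phi_i \lambda^k) = \phi_i p^k$ and the fact that such classes become redundant in $H^*(\PP(V))$ for $k \ge r$. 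Property (8) falls out of the same initial-value computation, and the isomorphism claim for $\FT$ then follows by a Nakayama-style argument from (8) and Leray--Hirsch.

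The hardest step will be property (5), the $z\partial_z$-compatibility. The $z\partial_z$-equation in \eqref{eq:diffeq_M} for $M_V$ involves $c_1^{S^1}(TV) = c_1(TB) + c_1(V) + r\lambda$ together with the grading operator $\mu_V$. After Fourier transformation, the $r\lambda$ summand contributes $rzq\partial_q + rp$ by property (2); combined with $c_1(TB) + c_1(V)$ this reconstructs $c_1(T\PP(V)) = c_1(TB) + c_1(V) + rp$ together with an extra $rzq\partial_q$, which must be absorbed into the $z\partial_z$-direction on $\cH_{\PP(V)}$ via the homogeneity relation coming from $\deg q = 2r > 0$. The residual $+\tfrac{1}{2}$ shift in (5) then arises from $\mu_{\PP(V)} - \mu_V = \tfrac{\dim_\C V - \dim_\C \PP(V)}{2} = \tfrac{1}{2}$ on classes of matched cohomological degree. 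Tracking these contributions cleanly, and in particular verifying that Fourier transformation genuinely intertwines $z\partial_z$ on the two sides up to this shift, will be the main book-keeping obstacle.
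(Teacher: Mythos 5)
Your proposal follows the same strategy as the paper's proof: apply Theorem~\ref{thm:mirrorthm} to $J_V^\lambda(\tau)$, Birkhoff-factor $\hJ_V(\tau)$ as $M_{\PP(V)}(\htau)\Upsilon$ to define the mirror map, define $\FT$ so that (9) is immediate (the paper differentiates in $\tau^{i,k}$ and sets $\FT(\phi_i\lambda^k)=z\nabla'_{\tau^{i,k}}\Upsilon$, which is the same as your tangency argument), and then deduce (1)--(5) from (9) together with the intertwining relations of the raw Fourier transform $\widehat{\phantom{a}}$ on $\cH$. The degree/homogeneity argument for (6) and the $q=Q=0$ evaluation for (7), (8) match as well, and you correctly identify the fifth relation involving $z\partial_z - z^{-1}c_1^{S^1}(TV) + \mu_V + \tfrac12$ as the most delicate book-keeping step (a minor slip: the extra term produced when you replace $z^{-1}r\lambda$ by $rq\partial_q + z^{-1}rp$ is $rq\partial_q$, not $rzq\partial_q$, and it cancels against a $q\partial_q$ term produced by $z\partial_z$ and $\mu_{\PP(V)}$ through the degree relation rather than being ``absorbed'').

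The one genuine gap is the claim that $\FT$ is an isomorphism ``by a Nakayama-style argument from (8) and Leray--Hirsch.'' Property (8) only gives $\FT(\phi_i\lambda^k)\big|_{Q=\tau=0}=\phi_ip^k$ for $0\le k\le r-1$, but the source $\QDM_{S^1}(V)$ is free over $\C[z][\![Q,\btau]\!]$ on all $\phi_i\lambda^k$ with $k\ge 0$, while the target is free on $\phi_ip^kq^l$ with $0\le k\le r-1$, $l\ge 0$. Nakayama applied only to (8) is therefore inconclusive; you must also control $\FT(\phi_i\lambda^k)$ for $k\ge r$. The paper does this by computing $(M_V(\tau)\phi_i\lambda^r)\sphat\,\big|_{Q=\tau=0}=\phi_ip^r+q\phi_i+O(z^{-1})$, reading off the action of $z\nabla'_{q\partial_q}$ at $Q=\tau=0$, iterating to show $\FT(\phi_i\lambda^{rn+m})\big|_{Q=\tau=0}=\phi_ip^mq^n+(\text{lower order in }q)+O(z)$, and then using the homogeneity of $\FT$ to conclude these form a $\C[z]$-basis of $H^*(\PP(V))\otimes\C[z,q]$. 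You will need to include this analysis of the $k\ge r$ images before the Nakayama argument applies.
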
 
\begin{proof}[Proof of Theorem $\ref{thm:Fourier_QDM}$]
Consider the $J$-function $J_V(\tau)$ with $\tau = \sum_{i,k} \tau^{i,k} \phi_i \lambda^k$. By Theorem \ref{thm:mirrorthm}, the Fourier transform $z \hJ_V(\tau)$ lies in the Givental cone for $\PP(V)$. Hence there exist unique $\htau(\tau)\in H^*(\PP(V))[\![q,Q,\btau]\!]$ and $\Upsilon(\tau)\in H^*(\PP(V))[z][\![q,Q,\btau]\!]$ such that 
\[
(M_V(\tau) 1 )\sphat = \hJ_V(\tau) = M_{\PP(V)}(\htau(\tau)) \Upsilon(\tau). 
\]
By differentiating this in $\tau^{i,k}$, we obtain (using \eqref{eq:diffeq_M})
\begin{equation} 
\label{eq:Fourier_Birkhoff}
(M_V(\tau) \phi_i \lambda^k )\sphat =M_{\PP(V)}(\htau(\tau)) z\nabla'_{\tau^{i,k}} \Upsilon(\tau). 
\end{equation} 
This shows that $M_{\PP(V)}(\htau(\tau))$ arises from the Birkhoff factorization of the matrix with column vectors $\{(M_V(\tau)\phi_i \lambda^k)\sphat : 0\le i\le s, 0\le k\le r-1\}$ because $M_{\PP(V)} = \id + O(z^{-1})$ and $z\nabla'_{\tau^{i,k}} \Upsilon(\tau)$ does not contain negative powers of $z$ (see \cite{Coates-Givental}). Note that 
\begin{equation} 
\label{eq:restr_Q=q=0} 
(M_V(\tau)\phi_i \lambda^k)\sphat\,\big|_{Q=q=0} = e^{\kappa(\tau)/z} \phi_i p^k 
\end{equation} 
and these vectors with $0\le i\le s$, $0\le k\le r-1$ form a basis of $H^*(\PP(V))$. The mirror map $\htau(\tau)$ is then determined by $M_{\PP(V)}(\htau(\tau)) 1 = 1 + z^{-1} \htau(\tau) + O(z^{-2})$. The formula \eqref{eq:restr_Q=q=0} with $i=k=0$ implies that $\htau(\tau)|_{Q=q=0} = \kappa(\tau)$ and $\Upsilon(\tau)|_{Q=q=0} = 1$. Similarly, a direct calculation shows that $(M_V(\tau)1)\sphat\,|_{Q=0,\tau^{*,r}=\tau^{*,r+1}=\cdots =0} \equiv 1 + z^{-1} \kappa(\tau)+O(z^{-2})$ modulo $(\tau^{*,\le r-1})^2$. Part (7) follows from these computations. 

$M_V(\tau)\phi_i\lambda^k$ is homogeneous of degree $\deg (\phi_i\lambda^k)$. It is easy to see that its Fourier transform $(M_V(\tau)\phi_i \lambda^k)\sphat\,$ is also homogeneous of the same degree. This implies the homogeneity of the Birkhoff factors $M_\PP(V)(\htau(\tau))$, $z \nabla'_{\tau^{i,k}}\Upsilon(\tau)$ and hence of $\Upsilon(\tau) = \nabla'_{\tau^{0,0}} \Upsilon(\tau)$ and the mirror map $\htau(\tau)$. The homogeneity implies that $\htau(\tau)$ and $\Upsilon(\tau)$ depend polynomially on $q$ (for each fixed power of $Q$ and $\tau$), i.e.~$\htau(\tau) \in H^*(\PP(V))\otimes \C[q][\![Q,\btau]\!]$, $\Upsilon(\tau) \in H^*(\PP(V))\otimes \C[z,q][\![Q,\btau]\!]$. 

We define a $\C[z][\![Q,\btau]\!]$-linear map $\FT \colon \QDM_{S^1}(V) \to \htau^*\QDM(\PP(V))$ by sending $\phi_i \lambda^k$ to $z\nabla'_{\tau^{i,k}} \Upsilon(\tau)$.  Then $\FT$ is homogeneous of degree zero and satisfies Part (9) by \eqref{eq:Fourier_Birkhoff}. Parts (1)-(5) follow from Part (9) together with the following relations 
\begin{align*}
(\cS f)\sphat & = q \hat{f} \\
(\lambda f)\sphat & = (z q\partial_q +p)\hat{f} \\
(\partial_{\tau^{i,k}} f)\sphat & =\partial_{\tau^{i,k}} \hat{f} \\ 
((\xi Q \partial_Q+z^{-1}\xi) f) \sphat & = (\xi Q\partial_Q+z^{-1}\xi) \hat{f} \\ 
((z\partial_z - z^{-1} c_1^{S^1}(TV) + \mu_V+\tfrac{1}{2}) f )\sphat & = (z\partial_z - z^{-1} c_1(T\PP(V)) + \mu_{\PP(V)}) \hat{f} 
\end{align*} 
and the intertwining properties \eqref{eq:diffeq_M} of $M_V$ and $M_{\PP(V)}$. 

Finally we show that $\FT$ is an isomorphism (along with part (8)). It suffices to show that $\FT(\phi_i \lambda^k)|_{Q=\tau=0}\in H^*(\PP(V))\otimes \C[z,q]$ with $0\le i\le s$, $k\ge 0$ form a basis over $\C[z]$. We have 
\[
(M_V(\tau)\phi_i\lambda^k)\sphat\,\big|_{Q=\tau=0} = \sum_{l=0}^\infty 
\frac{\phi_i (p+lz)^k}{\prod_{c=1}^l e_{p+cz}(V)} q^l = 
\begin{cases} 
\phi_i p^k + O(z^{-1})  & k\le r-1 \\ 
\phi_i p^r + q \phi_i + O(z^{-1}) & k =r 
\end{cases} 
\]
which shows that $\FT(\phi_i \lambda^k) |_{Q=\tau=0}= \phi_i p^k$ for $0\le k\le r-1$ and, since the multiplication by $\lambda$ corresponds to $z\nabla'_{q\partial_q}$ under $\FT$, that 
\[
z\nabla'_{q\partial_q} (\phi_i p^k)\big|_{Q=\tau=0} = 
\begin{cases} 
\phi_i p^{k+1} & 0\le k\le r-2; \\ 
\phi_i p^r + q \phi_i & k=r-1.  
\end{cases}
\]
Hence for $k = rn + m$ with $0\le m\le r-1$, we have
\begin{align*} 
\FT(\phi_i \lambda^k)\big|_{Q=\tau=0} & = (z \nabla'_{q\partial_q})^k \phi_i \big|_{Q=\tau=0}\\
& =  \phi_i p^m  q^n + (\text{polynomial in $q$ of degree $\le (n-1)$}) +O(z). 
\end{align*}  
The homogeneity of $\FT(\phi_i\lambda^k)|_{Q=\tau=0}$ implies that they form a $\C[z]$-basis of $H^*(\PP(V))\otimes \C[z,q]$. 
\end{proof} 

We can also identify the pairing $P_{\PP(V)}$ with a Fourier transform of $P_V$. Define the pairing $\hP_{V}$ on $\QDM_{S^1}(V)$ as follows (see Remark \ref{rem:hP_V} for another definition): 
\begin{equation} 
\label{eq:hP}
\hP_V(x,y) := \sum_{k,l\in \Z} q^{k+l} \Res_{\lambda=0} P_V(\bS(\tau)^{-k} x,\bS(\tau)^{-l} y) d\lambda.  
\end{equation} 
It is clear from the definition that $\hP_V(\bS(\tau)x,y) = \hP_V(x,\bS(\tau)y)=q \hP_V(x,y)$. 
\begin{proposition} 
\label{prop:Fourier_pairing} 
The pairing $\hP_V$ is well-defined and takes values in $\C[z,q][\![Q,\btau]\!]$. We have 
\[
\hP_V(x,y) = P_{\PP(V)}(\FT(x),\FT(y))
\] 
for $x,y \in \QDM_{S^1}(V)$. 
\end{proposition}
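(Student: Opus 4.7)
The plan is to reduce the identity to a residue calculation in Givental space and then apply a Grothendieck--Leray--Hirsch residue formula expressing $\int_{\PP(V)}$ as a formal residue at $\lambda=0$. First, set $\tilde x = M_V(\tau)x$ and $\tilde y = M_V(\tau)y$ in $\cH_{V,\rm pol}$. Theorem~\ref{thm:Fourier_QDM}(9) gives $\hat{\tilde x} = M_{\PP(V)}(\htau(\tau))\FT(x)$ and likewise for $y$; the isometry \eqref{eq:pairing_properties} of $M_{\PP(V)}$ then yields $P_{\PP(V)}(\FT x,\FT y) = P_{\PP(V)}(\hat{\tilde x},\hat{\tilde y})$. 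Combined with the intertwiner $M_V\bS(\tau) = \cS M_V$ from \eqref{eq:bS_properties} and the isometry of $M_V$, which give $P_V(\bS(\tau)^{-k}x,\bS(\tau)^{-l}y) = P_V(\cS^{-k}\tilde x,\cS^{-l}\tilde y)$, the proposition reduces to the Givental-space identity
\[
\sum_{k,l\in\Z}q^{k+l}\Res_{\lambda=0}P_V(\cS^{-k}\tilde x,\cS^{-l}\tilde y)\,d\lambda \;=\; P_{\PP(V)}(\hat{\tilde x},\hat{\tilde y}),
\qquad \tilde x,\tilde y\in\cH_{V,\rm pol}.
\]

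The key ingredient is the residue description
\[
\int_{\PP(V)}\kappa(\alpha(\lambda)) \;=\; \Res_{\lambda=0}\int_B\frac{\alpha(\lambda)}{e_\lambda(V)}\,d\lambda,
\]
where $\Res_{\lambda=0}$ picks up the formal residues at the (nilpotent) Chern roots $-\delta_i$ of $V$, i.e.\ the $\delta$-adic residues at $\lambda=0$. This is the classical Leray--Hirsch/Grothendieck residue for polynomial $\alpha$. For the rational $\alpha_{k,l}:=(\cS^{-k}\tilde x)(-z,\lambda)\,(\cS^{-l}\tilde y)(z,\lambda)=g(\lambda)/\prod_{c\ne 0}e_{\lambda+cz}(V)$ arising here, $\kappa(\alpha_{k,l})$ is well defined in $H^*(\PP(V))(\!(z^{-1})\!)$ since $e_{p+cz}(V)$ is invertible for $c\ne 0$, and the formula extends: the extra finite poles of $\alpha_{k,l}/e_\lambda(V)$ lie at $\lambda=-\delta-cz$ with $c\ne 0$, which are separated from $0$ by a multiple of $z$ and therefore do not contribute to $\Res_{\lambda=0}$. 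Since $\int_B \alpha_{k,l}(\lambda)/e_\lambda(V) = P_V(\cS^{-k}\tilde x,\cS^{-l}\tilde y)(\lambda)$, expanding $P_{\PP(V)}(\hat{\tilde x},\hat{\tilde y})$ as a double sum of Kirwan-image integrals and applying the residue formula term-by-term produces exactly the LHS above.

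For well-definedness and the asserted $q$-polynomiality, observe that for $k<0$, $\cS^{|k|}\tilde x = \prod_{c=0}^{|k|-1}e_{\lambda-cz}(V)\cdot\tilde x(\lambda-|k|z)$ lies in the ideal $(e_\lambda(V))$. Hence $\kappa(\cS^{-k}\tilde x)=0$, and on the $\hP_V$ side the factor $e_\lambda(V)$ cancels the pole in $P_V$, making $P_V(\cS^{-k}\tilde x,\cS^{-l}\tilde y)$ regular near the Chern roots and killing $\Res_{\lambda=0}$. Thus the sum is effectively over $k,l\ge 0$, and homogeneity (Theorem~\ref{thm:Fourier_QDM}(6)) forces each fixed $(Q,\btau)$-monomial to receive contributions from only finitely many $(k,l)$, giving $\hP_V\in\C[z,q][\![Q,\btau]\!]$. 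The main obstacle is pinning down the precise interpretation of $\Res_{\lambda=0}$ so that the residue formula rigorously extends to the rational $\alpha_{k,l}$: specifically, reconciling the $\delta$-adic expansion underlying $\Res_{\lambda=0}$ with the $z^{-1}$-adic expansion used to invert $e_{p+cz}(V)$ inside $H^*(\PP(V))(\!(z^{-1})\!)$, and checking that the residue operation commutes with the double summation; a careful degree/homogeneity argument based on Theorem~\ref{thm:Fourier_QDM}(6) should take care of this.
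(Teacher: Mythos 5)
Your proposal follows essentially the same route as the paper's proof: reduce to the Givental-space identity via the isometries of $M_V$, $M_{\PP(V)}$ and the shift intertwiner, apply a Leray--Hirsch residue formula (isolated in the paper as Lemma \ref{lem:JK_res}, $\pi_*\kappa(f)=\Res_{\lambda=0}f(\lambda)e_\lambda(V)^{-1}d\lambda$) to recognize the residue double sum as $P_{\PP(V)}(\hat f,\hat g)$, and observe the vanishing of the $k<0$ or $l<0$ terms. The ``obstacle'' you flag at the end is not a genuine gap: the extra factors $\prod_{c\ne 0}e_{\lambda+cz}(V)^{-1}$ are regular at $\lambda=0$ (their zeros sit near $\lambda=\mp cz$) and expand as elements of $H^*(B)[\![\lambda]\!]\otimes\C[z,z^{-1}]$, so Lemma \ref{lem:JK_res} applies verbatim after extending scalars to $\C[z,z^{-1}]$ and no reconciliation of $\delta$-adic and $z^{-1}$-adic expansions is needed.
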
 
\begin{proof} 
For $x,y\in \QDM_{S^1}(V)$, we set $f =M_V(\tau) x$ and $g = M_V(\tau)y$. We have 
\begin{align*} 
\hP_V(x,y) & = \sum_{k,l\in \Z} q^{k+l} \Res_{\lambda=0} P_V(\cS^{-k} f, \cS^{-l} g) d\lambda && \text{by \eqref{eq:pairing_properties} and \eqref{eq:bS_properties}}\\ 
& = \sum_{k,l\in \Z} q^{k+l} P_{\PP(V)}(\kappa(\cS^{-k} f), \kappa(\cS^{-l}g)) 
&& \text{by Lemma \ref{lem:JK_res} below} \\
& = P_{\PP(V)}(\hat{f}, \hat{g}) \\ 
& = P_{\PP(V)}(M_{\PP(V)}(\htau)\FT(x), M_{\PP(V)}(\htau) \FT(y)) && \text{by Theorem \ref{thm:Fourier_QDM}(9)} \\ 
& = P_{\PP(V)}(\FT(x), \FT(y)) && \text{by \eqref{eq:pairing_properties}}. 
\end{align*} 
Note that the summands are non-zero only when $k,l\ge 0$, and hence the sum is well-defined. It is clear that $P_{\PP(V)}(\FT(x), \FT(y))$ lies in $\C[z,q][\![Q,\btau]\!]$. 
\end{proof} 

\begin{lemma} 
\label{lem:JK_res} 
Let $\pi \colon \PP(V) \to B$ denote the projection. 
For $f = f(\lambda)\in H^*(B)[\![\lambda]\!]$, we have 
\[
\pi_*(\kappa(f) ) = \Res_{\lambda=0} \frac{f(\lambda)}{e_\lambda(V)} d\lambda. 
\]
In particular, by integrating it over $B$, we have 
\[
\int_{\PP(V)} \kappa(f) = \Res_{\lambda=0} \left( \int^{S^1}_V f \right) d\lambda. 
\]
\end{lemma}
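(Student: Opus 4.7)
\textbf{Proof plan for Lemma \ref{lem:JK_res}.} The strategy is to reduce to monomials and recognize both sides as Segre-class expressions. Since the Kirwan map, the push-forward $\pi_*$, and the residue functional $f\mapsto \Res_{\lambda=0}\frac{f}{e_\lambda(V)}d\lambda$ are all $H^*(B)$-linear, it suffices to verify the identity on $f(\lambda)=\lambda^k$ for $k\ge 0$. Note that $\kappa(f)=f(p)$ is a well-defined class in $H^*(\PP(V))$ since $H^*(\PP(V))$ is finite-dimensional, so only finitely many powers $p^k$ contribute in any fixed total degree.

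For the left-hand side, I would invoke the classical computation of $\pi_*$ for projective bundles:
\[
\pi_*(p^k)=s_{k-r+1}(V),
\]
where $s_j(V)$ denotes the $j$-th Segre class of $V$, with the convention $s_j(V)=0$ for $j<0$. In particular $\pi_*(p^k)=0$ for $0\le k\le r-2$.

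For the right-hand side, the equivariant Euler class factors as $e_\lambda(V)=\lambda^r\cdot c(V)(1/\lambda)$, where $c(V)(t)=1+c_1(V)t+\cdots+c_r(V)t^r$ is the total Chern polynomial. The identity $s(V)=c(V)^{-1}$ of Segre series therefore yields the formal Laurent expansion at $\lambda=\infty$:
\[
\frac{1}{e_\lambda(V)}=\sum_{j\ge 0}s_j(V)\,\lambda^{-r-j}.
\]
Multiplying by $\lambda^k$ and extracting the coefficient of $\lambda^{-1}$ (which is what $\Res_{\lambda=0}$ means here — equivalently, the sum of residues at the poles $\lambda=-\delta_i$ associated to the Chern roots $\delta_i$, all nilpotent, hence infinitesimally close to $0$) picks out the index $j=k-r+1$, giving
\[
\Res_{\lambda=0}\frac{\lambda^k}{e_\lambda(V)}d\lambda=s_{k-r+1}(V),
\]
which matches $\pi_*(p^k)$ on the nose.

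For the ``In particular'' statement, I would use $S^1$-equivariant localization on $V$: the only $S^1$-fixed locus is the zero section $B\subset V$, with equivariant normal bundle $V$ itself, so
\[
\int_V^{S^1}f=\int_B\frac{f}{e_\lambda(V)}.
\]
Combining with the first part and commuting $\Res_{\lambda=0}$ past $\int_B$ (both are $\C$-linear functionals acting on independent variables),
\[
\Res_{\lambda=0}\Bigl(\int_V^{S^1}f\Bigr)d\lambda=\int_B\Res_{\lambda=0}\frac{f}{e_\lambda(V)}d\lambda=\int_B\pi_*(\kappa(f))=\int_{\PP(V)}\kappa(f),
\]
where the last equality is the projection formula. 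The main subtle point (rather than an obstacle) is to clarify the residue convention: since the denominator $e_\lambda(V)$ has finite poles that are all nilpotent deformations of $\lambda=0$, the ``residue at $0$'' coincides with minus the residue at $\infty$, which is what makes the Segre expansion at $\infty$ the correct tool; the computation itself is then just a matter of coefficient-extraction.
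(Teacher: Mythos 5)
Your proof is correct, and it goes by direct computation rather than the paper's characterization argument. The paper observes that $f\mapsto\pi_*(\kappa(f))$ is the \emph{unique} $H^*(B)$-linear functional on $H^*(B)[\![\lambda]\!]$ that (i) annihilates $e_\lambda(V)\,H^*(B)[\![\lambda]\!]$ and (ii) sends $\lambda^i\mapsto 0$ for $0\le i\le r-2$ and $\lambda^{r-1}\mapsto 1$; it then verifies that $f\mapsto\Res_{\lambda=0}\frac{f}{e_\lambda(V)}d\lambda$ has the same three properties ($H^*(B)$-linearity is clear; vanishing on $e_\lambda(V)$-multiples because $e_\lambda(V)g/e_\lambda(V)=g$ is regular; the values on $\lambda^i$, $i<r$, follow from the leading behaviour $1/e_\lambda(V)=\lambda^{-r}+\cdots$). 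You instead identify both sides explicitly with $s_{k-r+1}(V)$ on monomials $\lambda^k$; this requires invoking the push-forward formula $\pi_*(p^k)=s_{k-r+1}(V)$ for \emph{all} $k$, whereas the paper only needs the normalization for $0\le k\le r-1$ and lets the relation $\kappa(e_\lambda(V))=0$ in $H^*(\PP(V))$ handle the rest. Both routes are short and buy the same conclusion; yours is more explicit and identifies the Segre classes appearing on each side, which could be useful elsewhere. One small simplification to your remark on the residue convention: since $e_\lambda(V)\equiv\lambda^r$ modulo the nilpotent ideal $H^{>0}(B)$, $1/e_\lambda(V)$ is actually a Laurent \emph{polynomial} in $\lambda$ (finitely many negative powers), so $f/e_\lambda(V)$ has finitely many negative powers of $\lambda$ and $\Res_{\lambda=0}$ is literally the coefficient of $\lambda^{-1}$; your interpretation via $-\Res_{\lambda=\infty}$ and the sum over nilpotent poles is fine but not needed to make the expression well-defined.
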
 
\begin{proof} 
The map $f\mapsto \pi_*(\kappa(f))$ is uniquely characterized by the following properties: 
\begin{itemize}
\item[(1)] it is a homomorphism of $H^*(B)$-modules; 
\item[(2)] it vanishes on $e_\lambda(V) H^*(B)[\![\lambda]\!]$;  
\item[(3)] $\pi_*(\kappa(\lambda^i)) = 0$ for $0\le i\le r-2$ and $\pi_*(\kappa(\lambda^{r-1}))=1$. 
\end{itemize} 
It is easy to check that the right-hand side satisfies the same properties.
\end{proof} 

\begin{remark} 
\label{rem:hP_V}
For $x, y\in \QDM_{S^1}(V)$, we have 
\[
P_V(\bS(\tau)^{-k}x, \bS(\tau)^{-l} y) = e^{-k z \partial_\lambda}P_V(x, \bS(\tau)^{-k-l} y) = e^{-k z\partial_\lambda} P_V(f, \cS^{-k-l} g)
\] 
with $f =M_V(\tau) x$, $g= M_V(\tau) y$ (where we used \eqref{eq:shift_pairing}) and $P_V(f,\cS^{-d}g)=P_V(f,e_{\lambda+z}(V)^{-1} \cdots e_{\lambda+dz}(V)^{-1} e^{dz \partial_\lambda} g)$ has poles only at $\lambda =- jz$ with $j\in [0,d]\cap \Z$ as a function of $\lambda$. Therefore the pairing $\hP_V$ can be rewritten as 
\begin{align*} 
\hP_V(x,y) & = \sum_{d\in \Z} q^d \sum_{k+l=d} \Res_{\lambda=0} e^{-k z\partial_\lambda} P_V(f, \cS^{-d} g) d\lambda \\ 
& = \sum_{d\in \Z} q^d \sum_{0\le k\le d} \Res_{\lambda=-kz} P_V(f, \cS^{-d} g) d\lambda \\ 
& = -\sum_{d\in  \Z} q^d \Res_{\lambda=\infty} P_V(f, \cS^{-d} g) d\lambda \\
& = -\sum_{d\in \Z} q^d \Res_{\lambda=\infty} P_V(x,\bS(\tau)^{-d} y) d\lambda. 
\end{align*} 
\end{remark}

\begin{example} Consider the vector bundle $V=\cO(-1) \oplus \cO^{\oplus(r-1)}$ over $\PP^1$. We set the parameter $\tau$ to be zero and denote by $Q$ the Novikov variable corresponding to the positive generator of $H_2(\PP^1,\Z)$. In this example, we can consider the equivariant quantum $D$-module $\QDM_{S^1}(V)$ over the polynomial ring  $\C[z,\lambda,Q]$; $\QDM_{S^1}(V)$ is generated by $1$ as a $\C[z]\langle Q, z\nabla_{Q\partial_Q},\lambda, \bS \rangle$-module and is defined by the relations 
\begin{align*} 
((z\nabla_{Q\partial_Q})^2 + Q (z \nabla_{Q \partial_Q}-\lambda)) \cdot 1 &=0, \\
(\bS - \lambda^{r-1}(\lambda -z\nabla_{Q\partial_Q})) \cdot 1 &=0.
\end{align*} 
The hyperplane class $h\in H^2(\PP^1)$ is given by $h = z \nabla_{Q\partial_Q} 1$. The quantum connection and the shift operator can be written in the $\C[z,\lambda,Q]$-basis $\{1,h\}$ as 
\[
\nabla = d + \frac{1}{z} \begin{pmatrix} 0 & \lambda Q \\ 1 & -Q \end{pmatrix} \frac{dQ}{Q}, \qquad \bS = \begin{pmatrix} \lambda^r & -\lambda^r Q \\ -\lambda^{r-1} & \lambda^r + Q \lambda^{r-1} \end{pmatrix}\circ e^{-z\partial_\lambda} 
\]
where we omitted the connection in the $z$-direction. Note that the quantum connection does not depend on the rank $r$ but the shift operator does. We identify $\bS$ with the Novikov variable $q$ for $\PP(V)$ associated with the class of a line in the fibre. We can check that $\{1,\lambda,\dots,\lambda^{r-1}, h,h\lambda,\dots, h\lambda^{r-1}\}$ gives a basis of $\QDM_{S^1}(V)$ over  $\C[z,q,Q]=\C[z,\bS,Q]$. The action of $\lambda$ defines the connection $z\nabla_{q\partial_q}$ in the $q$-direction; the Fourier-transformed connection on the $(q,Q)$-space can be written in the above basis as 
\footnotesize  
\[
zd+ \left(
\begin{array}{ccccc|ccccc} 
0 & 0 &  &  & q  & 0 & 0 &  &  & qQ \\  
1 & 0 &      &  & 0 & 0 & 0 &      &  & 0 \\
0 & 1 & &  &   & &   & &  & \\
 &  & \ddots & &  &  &  & & &   \\
0 & & & 1 &  0 & 0 & 0& &  &  0  \\ 
\hline 
0 & 0 &  &  & 0 & 0 & 0 &  &  & q \\ 
 &  &  &  &  & 1 & 0 &  &  & 0 \\  
 &  &  &  & & 0 & 1 & &  & \\
0 & 0 & &  &   0 &   &  & \ddots & &  \\
0 & 0& &   &  1 & 0 & & & 1 &  0 
\end{array} 
\right) \frac{dq}{q} + 
\left( 
\begin{array}{ccccc|ccccc} 
0 & 0 & & & 0& 0 & 0 & & & qQ \\ 
 0  &0   &  &&  &Q & 0 & & &  0\\ 
   & &  & & & 0 & Q & & & \\ 
   & & & & & && \ddots && \\ 
0   & & & & 0& & & & Q & 0  \\ 
\hline 
1 & 0 & & & 0 & -Q & 0 & & & 0 \\ 
0 & 1 & & &    & 0 & -Q & & &  \\ 
  &    & \ddots & &   &  & & \ddots & & \\
  & & & & & & & & -Q & 0\\ 
0  & & & & 1 & 0 & & & 0& 0
\end{array} 
\right) \frac{dQ}{Q}.  
\] 
\normalsize 
The matrices appearing here represent the multiplication by $p$ and $h$ in the quantum cohomology of $\PP(\cO_{\PP^1}(-1)\oplus \cO_{\PP^1}^{\oplus (r-1)})$ with respect to the basis $\{1,p,\dots,p^{r-1},h,hp,\dots,hp^{r-1}\}$. 
\end{example}

\section{Decomposition of the quantum $D$-module of $\PP(V)$} 
\label{sec:decomp} 

In this section we construct a decomposition (Theorem \ref{thm:decomp_QDM_introd}) of the quantum $D$-module of $\PP(V)$ into a direct sum of copies of the quantum $D$-module of $B$. 

\subsection{Statement} We now present a more precise version of Theorem \ref{thm:decomp_QDM_introd}. 
Let $\{\phi_i\}_{i=0}^s$ be a basis of $H^*(B)$ with $\phi_0=1$ as before. Then $\{\phi_i p^k\}_{0\le i\le s, 0\le k\le r-1}$ form a basis of $H^*(\PP(V))$. We denote by $\sigma = \sum_{i=0}^s \sigma^i \phi_i\in H^*(B)$ the bulk parameter for $B$ and by $\htau = \sum_{i=0}^s \sum_{k=0}^{r-1} \htau^{i,k} \phi_i p^k\in H^*(\PP(V))$ the bulk parameter for $\PP(V)$. 
Set 
\begin{equation} 
\label{eq:r'}
r' := \begin{cases} r & \text{when $r-1$ is even;} \\ 
2r & \text{when $r-1$ is odd}. 
\end{cases} 
\end{equation} 
As in Notation \ref{nota:splitting}, we use the pair $(q,Q)$ to denote the Novikov variable of $\PP(V)$. 
Recall from \S\ref{subsubsec:QDM_PP(V)} that the quantum $D$-module $\QDM(\PP(V))$ of $\PP(V)$ is defined over the ring $\C[z,q][\![Q]\!]$. 
Its localized version is defined as  
\[
\QDM(\PP(V))_{\rm loc} := \QDM(\PP(V))\otimes_{\C[z,q][\![Q,\htau]\!]} \C[z](\!(q^{-1/r'})\!)[\![Q,\htau]\!],   
\]
equipped with the induced connection $\nabla$ and the pairing $P_{\PP(V)}$.  

In this section, we introduce an additional copy $Q_B$ of the Novikov variable for $B$ and define the quantum $D$-module of $B$ to be the module 
\[
\QDM(B) = H^*(B)[z][\![Q_B,\sigma]\!] 
\]
equipped with the quantum connection $\nabla_{\sigma^i}, \nabla_{\xi Q_B\partial_{Q_B}}, \nabla_{z\partial_z}$ (with $\xi \in H^2(B)$) and the $z$-sesquilinear pairing $P_B$ (see \S\ref{subsec:qconn_fundsol}). 
We relate the Novikov variable $Q_B$ of $B$ to the variables $(q,Q)$ of $\PP(V)$ via the following formula:  
\begin{equation} 
\label{eq:QB_qQ_relation} 
Q_B^d = q^{-c_1(V)\cdot d/r} Q^d
\end{equation} 
for $d\in H_2(B,\Z)$, which is consistent with the degrees $\deg Q_B^d = 2c_1(TB)\cdot d$, $\deg q=2r$, $\deg Q^d = 2(c_1(TB)+c_1(V))\cdot d$. The formula \eqref{eq:QB_qQ_relation} defines an embedding of the coefficient ring $\C[z][\![Q_B]\!]$ for $\QDM(B)$ into the ring $\C[z][\![q^{-1/r'},q^{-c_1(V)/r}Q] \!]\subset \C[z](\!(q^{-1/r'})\!)[\![Q]\!]$. We introduce the following base changes of $\QDM(B)$:  
\begin{align}
\label{eq:extended_QDM_B} 
\begin{split} 
\QDM(B)_{\rm ext} & := \QDM(B) \otimes_{\C[z][\![Q_B,\sigma]\!]} \C[z][\![q^{-1/r'}, q^{-c_1(V)/r}Q,\sigma]\!],  \\
\QDM(B)_{\rm ext,loc} & := \QDM(B) \otimes_{\C[z][\![Q_B,\sigma]\!]} \C[z](\!(q^{-1/r'})\!)[\![Q,\sigma]\!].  
\end{split} 
\end{align} 
These extended modules are equipped with the quantum connection 
\begin{align*} 
\nabla_{\sigma^i} & = \nabla_{\sigma^i} \otimes 1 + 1 \otimes \partial_{\sigma^i} 
&& = \partial_{\sigma^i} + z^{-1}(\phi_i\star_\sigma) \\ 
\nabla_{q\partial_q} & = \nabla_{-(c_1(V)/r)Q_B\partial_{Q_B}} \otimes 1 + 1 \otimes q\partial_q 
&& = q\partial_q  -z^{-1}( (c_1(V)/r)\star_\sigma) \\ 
\nabla_{\xi Q\partial_Q} & = \nabla_{\xi Q_B\partial_{Q_B}} \otimes 1 + 1\otimes \xi Q\partial_Q 
&& = \xi Q \partial_Q +z^{-1} (\xi\star_\sigma) \\ 
\nabla_{z\partial_z} & = \nabla_{z\partial_z} \otimes 1 + 1\otimes z\partial_z 
&& = z\partial_z - z^{-1} (E_B\star_\sigma) + \mu_B 
\end{align*} 
where $\xi \in H^2(B)$, and the $z$-sesquilinear pairing $P_B$. These structures are induced from those for $\QDM(B)$ via pull-back along the map $(Q,q) \mapsto Q_B$ given by \eqref{eq:QB_qQ_relation}. 

\begin{theorem} 
\label{thm:decomp_QDM} 
There exist formal maps $\varsigma_j \colon H^*(\PP(V)) \to H^*(B)$, $\htau \mapsto \varsigma_j(\htau)$ with  $\varsigma_j(\htau) \in  H^*(B)(\!(q^{-1/r})\!)[\![Q,\htau]\!]$, $j=0,\dots,r-1$ and an isomorphism 
\[
\Phi = \bigoplus_{j=0}^{r-1} \Phi_j  \colon \QDM(\PP(V))_{\rm loc} \cong \bigoplus_{j=0}^{r-1} \varsigma_j^*\QDM(B)_{\rm ext, loc} 
\]
where $\varsigma_j^*\QDM(B)_{\rm ext, loc}$ is the module $H^*(B)\otimes \C[z](\!(q^{-1/r'})\!)[\![Q,\htau]\!]$ equipped with the pull-back quantum connection $\varsigma_j^*\nabla$ 
\begin{align*} 
\varsigma_j^*\nabla_{\htau^{i,k}} & = \partial_{\htau^{i,k}} + z^{-1} (\partial_{\htau^{i,k}}\varsigma_j) \star_{\varsigma_j(\htau)} \\ 
\varsigma_j^*\nabla_{q\partial_q} & = q\partial_q + z^{-1} (-(c_1(V)/r)\star_{\varsigma_j(\htau)} + (q\partial_q \varsigma_j) \star_{\varsigma_j(\htau)}) \\ 
\varsigma_j^*\nabla_{\xi Q\partial_Q} & = Q\partial_Q + z^{-1} (\xi\star_{\varsigma_j(\htau)}+ (\xi Q\partial_Q \varsigma_j) \star_{\varsigma_j(\htau)}) \\ 
\varsigma_j^*\nabla_{z\partial_z} & = z\partial_z - z^{-1} (E_B\star_{\varsigma_j(\htau)}) + \mu_B 
\end{align*} 
and the pairing $P_B$ as in \eqref{eq:P}, such that the following holds with $\lambda_j = e^{2\pi \iu j/r}q^{1/r}$: 
\begin{itemize} 
\item[(1)] $\Phi$ intertwines $\nabla$ with $\bigoplus_{j=0}^{r-1} \varsigma_j^*\nabla$; 
\item[(2)] $\Phi$ intertwines $P_{\PP(V)}$ with $\bigoplus_{j=0}^{r-1} P_B$; 
\item[(3)] $\varsigma_j(\htau)$ is homogeneous of degree $2$ and $\Phi$ is homogeneous of degree $-(r-1)$ with respect to the usual grading on $H^*(\PP(V))$, $H^*(B)$ and the following degree of the variables: $\deg \htau^{i,k} = 2- \deg \phi_i -2k$, $\deg Q^d = 2 (c_1(TB)+c_1(V))\cdot d$, $\deg q = 2r$ and $\deg z=2$; 
\item[(4)] $\varsigma_j(\htau)|_{Q=\htau=0} =r\lambda_j - \frac{2\pi \iu j}{r} c_1(V) + O(q^{-1/r})$; 
\item[(5)] the Jacobian matrix of $\varsigma(\htau)= \bigoplus_{j=0}^{r-1}\varsigma_j(\htau)$ along $Q=\htau=0$ is of the form  $(\partial_{\htau^{i,k}}\varsigma_j)|_{Q=\htau=0} = \lambda_j^k (\phi_i+ O(q^{-1/r}))$ and is invertible over $\C(\!(q^{-1/r})\!)$; and 
\item[(6)] $\Phi_j(\phi_i p^k) |_{Q=\htau=0}= \frac{1}{\sqrt{r}}\lambda_j^{k-(r-1)/2}(\phi_i+O(q^{-1/r}))$. 
\end{itemize} 
\end{theorem} 


\begin{remark} 
\label{rem:splitting_intrinsic_meaning} 
The identification \eqref{eq:QB_qQ_relation} corresponds to a splitting of $\pi_*\colon H_2(\PP(V),\Z)\twoheadrightarrow H_2(B,\Z)$ over $\Q$ defined by the kernel of $c_1(T_{\rm vert}\PP(V)) = r p + \pi^*c_1(V)$. Here $T_{\rm vert}\PP(V) = \Ker(\pi_* \colon T\PP(V) \to \pi^*TB)$ denotes the vertical tangent bundle. This splitting differs from the one in \eqref{eq:H_2_splitting} defined by the kernel of $p$; however, it is intrinsic to the geometry of the projective bundle $\mathbb{P}(V) \to B$ and is independent of the choice of the vector bundle $V$ (up to tensoring with a line bundle). 
\end{remark} 

\begin{remark} 
We considered the quantum $D$-module $\QDM(\PP(V))$ and its decomposition over $\C[z](\!(q^{-1/r'})\!)$. More precisely, these can be defined over the `homogeneous' completion $\C[z](\!(q^{-1/r'})\!)_{\rm hom}$, which consists of finite sums of homogeneous elements in $\C[z](\!(q^{-1/r'})\!)$. Because $z$ and $q$ both have positive degrees, $\C[z](\!(q^{-1/r'})\!)_{\rm hom}$ is contained in the ring  $\C[q^{-1/r'}, q^{1/r'}][\![z]\!]$ of formal power series in $z$. This is relevant to Remark \ref{rem:Orlov}. 
\end{remark} 

\subsection{Idea of the proof} 
We first observe that the modified QRR operator $\Delta_V^\lambda$ \eqref{eq:modified_RR} gives rise to a solution of the difference-differential module $\QDM_{S^1}(V)$. The equation \eqref{eq:Delta_difference_eq} implies that $\Delta_V^\lambda$ solves the following difference equation: 
\begin{equation} 
\label{eq:RR_shift} 
e^{-z\partial_\lambda} \circ (\Delta_V^\lambda)^{-1} = (\Delta_V^\lambda )^{-1} \circ \cS. 
\end{equation} 
The operator $\Delta_V^\lambda$ clearly commutes with $\partial_{\tau^{i,k}}$ and $\xi Q \partial_Q + z^{-1} \xi$ (with $\xi \in H^2(B)$). Therefore the map assigning to a section $s\in \QDM_{S^1}(V)$ the $H^*(B)$-valued function $(\Delta_V^\lambda)^{-1} M_V(\tau) s$ intertwines the actions of $\bS(\tau)$, $\nabla_{\tau^{i,k}}$, $\nabla_{\xi Q\partial_Q}$ with the operators $e^{-z\partial_\lambda}$, $\partial_{\tau^{i,k}}$, $\xi Q\partial_Q + z^{-1} \xi$ respectively (thanks to the intertwining properties \eqref{eq:diffeq_M} of $M_V(\tau)$) and can be regarded as a \emph{solution} for $\QDM_{S^1}(V)$. Therefore its Fourier transform
\[
s\longmapsto \int e^{\lambda \log q/z} (\Delta_V^\lambda )^{-1} M_V(\tau) s \, d\lambda 
\]
should give a solution for $\QDM(\PP(V))$ through the Fourier isomorphism $\htau^*\QDM(\PP(V)) \cong \QDM_{S^1}(V)$ in Theorem \ref{thm:Fourier_QDM}. Despite the convergence issue of $\Delta_V^\lambda$ raised in Remark \ref{rem:conv_modified_RR}, we can formally make sense of this Fourier transformation using the stationary phase approximation (i.e.~the method of steepest descent). The QRR theorem tells us that this lies in the Givental cone of $B$ and induces projections $\QDM(\PP(V)) \to \varsigma_j^* \QDM(B)_{\rm ext, loc}$, $j=0,\dots,r-1$, where the $r$ choices result from the choice of critical points we use for the stationary phase method.

\begin{remark} 
The QRR operator $\Delta_V^\lambda$ induces an isomorphism between the quantum $D$-modules of $V$ and $B$ after localizing $\lambda$. The isomorphism is given by a gauge transformation $R(\tau)\in \lambda^{-r/2} \End(H^*(B))\otimes \C[z](\!(\lambda^{-1})\!)[\![Q,\tau]\!]$ from $\QDM_{S^1}(V)$ to $\sigma^*(\QDM(B)|_{Q_B\to Q})$ satisfying 
\[
M_B(\sigma(\tau)) R(\tau) = (\Delta_V^\lambda)^{-1} M_V(\tau) 
\]
and a coordinate change $\tau \mapsto \sigma = \sigma(\tau)\in r (\lambda -\lambda \log \lambda) - c_1(V) \log \lambda + H^*(B)[\lambda,\lambda^{-1}][\![Q,\tau]\!]$ of the form 
$\sigma(\tau) = \sigma_{\rm pert} + \tau+ O(Q)$ 
with 
\[
\sigma_{\rm pert} = \sum_{\delta:\text{Chern roots of $V$}} \left[(\lambda+ \delta) -  (\lambda+\delta) \log (\lambda+\delta)\right].  
\]
The shift operator acts on $\QDM(B)$ by the trivial one $e^{-z\partial_\lambda}$, but it acts highly non-trivially on the pull-back by the $\lambda$-dependent coordinate change $\tau \mapsto \sigma(\tau)$. From this point of view, the solution $(\Delta_V^\lambda)^{-1} M_V(\tau)$ considered above corresponds to the standard solution $M_B(\sigma(\tau))$ for $\sigma^*(\QDM(B)|_{Q_B\to Q})$.  
\end{remark}

\subsection{Fourier transformation via stationary phase method} 
\label{subsec:stationary_phase} 
In this section we define the stationary phase asymptotics of the Fourier integral 
\begin{equation*} 
\int e^{\lambda \log q/z} (\Delta_V^\lambda)^{-1} J(\lambda) d\lambda 
\end{equation*} 
for $J(\lambda) \in H^*_{S^1}(V)[z,z^{-1}]=H^*(B)[\lambda,z,z^{-1}]$. The integral can be written as 
\[
\int e^{-\varphi(\lambda)/z} 
\lambda^{-c_1(V)/z}\lambda^{-r/2} K(\lambda) d\lambda 
\]
where $\varphi(\lambda) = r (\lambda \log \lambda -\lambda) - \lambda \log q$ and $K(\lambda) = (\tDelta_V^\lambda)^{-1} J(\lambda)$  (see \S\ref{subsec:proof} for $\tDelta_V^\lambda$).  We note that $K(\lambda)$ lies in $H^*(B)[z,z^{-1}](\!(\lambda^{-1})\!)$. We expand the integrand at the critical point $\lambda_0 := q^{1/r}$ of the phase function $\varphi(\lambda)$. We have 
\[
\varphi(\lambda_0 e^{s/\sqrt{\lambda_0}}) =- r \lambda_0 + \frac{r}{2} s^2 +  \varphi_{\ge 3}(s,\lambda_0) \quad \text{with $\varphi_{\ge 3}(s,\lambda_0) = r \sum_{k=3}^\infty 
\frac{k-1}{k!} \frac{s^k}{(\sqrt{\lambda_0})^{k-2}}$} 
\]
and hence the Fourier integral can be rewritten in the form 
\begin{equation} 
\label{eq:Gaussian_form} 
e^{r\lambda_0/z} \lambda_0^{- c_1(V)/z} \lambda_0^{- (r-1)/2}  
\int e^{-r s^2/(2z)} L(s,\lambda_0) ds 
\end{equation} 
with $L(s,\lambda_0) \in H^*(B)[z,z^{-1},s](\!(\lambda_0^{-1/2})\!)$ given by 
\begin{align*} 
L(s,\lambda_0) &=e^{-(\frac{r}{2}-1)u} e^{-u c_1(V)/z}  e^{-\varphi_{\ge 3}(s,\lambda_0)/z} 
K(\lambda_0 e^{s/\sqrt{\lambda_0}}) \qquad \text{with $u= s/\sqrt{\lambda_0}$.} 
\end{align*} 
By expanding $L(s,\lambda_0)$ as a power series in $s$ and performing the Gaussian integral \eqref{eq:Gaussian_form} termwise over $s\in \R$ (assuming $z>0$), we obtain the  \emph{formal stationary phase asymptotics}  
\[
\int e^{\lambda \log q/z} (\Delta_V^\lambda)^{-1} J(\lambda) d\lambda \sim 
\sqrt{2\pi z} \cdot e^{r\lambda_0/z}  \scrF_0(J) 
\]
with $\scrF_0(J)\in \lambda_0^{-(r-1)/2} H^*(B)[z,z^{-1}](\!(\lambda_0^{-1})\!)[\log \lambda_0]$ given by 
\[
\scrF_0(J) = \frac{1}{\sqrt{r}}\lambda_0^{-(r-1)/2} e^{-c_1(V) \log \lambda_0/z}
\left[  e^{z (\partial_s)^2/(2r)} L(s,\lambda_0) \right]_{s=0}.   
\]
Note that half-integer powers of $\lambda_0$ in $L(s,\lambda_0)$ do not contribute. The monodromy transformation on the $q$-plane with respect to the $(2\pi j)$ rotation maps $\lambda_0$ to $\lambda_j := e^{2\pi\iu j/r} q^{1/r}$ and $\scrF_0(J)$ to $\scrF_j(J):=\scrF_0(J)|_{\lambda_0 \to \lambda_j}$. 
We have defined the Fourier transformations $\scrF_j \colon \cH_{V,\rm pol} \to \cH_{B,\C(\!(q^{-1/r'})\!)}$, $j=0,\dots,r-1$ taking values in 
\[
\cH_{B,\C(\!(q^{-1/r'})\!)} :=H^*(B)\otimes \C[z,z^{-1}](\!(q^{-1/r'})\!)[\![Q]\!][\log q] 
\]
where $r'$ is as in \eqref{eq:r'}. 
\begin{proposition}
\label{prop:scrF_properties} 
The map $\scrF_j \colon \cH_{V,\rm pol} \to \cH_{B,\C(\!(q^{-1/r'})\!)}$ satisfies the following properties: 
\begin{itemize} 
\item[(1)] $\scrF_j(\lambda J)  = (\lambda_j + z q\partial_q) \scrF_j(J)$; 
\item[(2)] $\scrF_j(\cS J)  = q \scrF_j(J)$;  
\item[(3)] $\scrF_j((z\partial_z - z^{-1} c_1^{S^1}(TV) + \mu_V+\tfrac{1}{2}) J) = (z\partial_z - z^{-1} (c_1(TB) + r \lambda_j) + \mu_B) \scrF_j(J)$. 
\end{itemize}  
\end{proposition}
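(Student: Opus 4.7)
I would prove Proposition \ref{prop:scrF_properties} by deriving all three intertwining relations from the formal Fourier integral
\[
\mathcal{I}(J) := \int e^{\lambda \log q/z}(\Delta_V^\lambda)^{-1} J(\lambda)\, d\lambda,
\]
which by the construction in this subsection has the stationary-phase asymptotic $\mathcal{I}(J) = \sqrt{2\pi z}\sum_{j=0}^{r-1} e^{r\lambda_j/z}\,\scrF_j(J)$. Each property translates to a symmetry of the integrand that is preserved critical-point-by-critical-point: since the phase exponents $e^{r\lambda_j/z}$, $j=0,\dots,r-1$, are formally linearly independent, identities at the level of $\mathcal{I}$ can be read off summand-by-summand.

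Property (1) follows from $zq\partial_q(e^{\lambda\log q/z}) = \lambda\, e^{\lambda\log q/z}$, giving $\mathcal{I}(\lambda J) = zq\partial_q\mathcal{I}(J)$. Since $\lambda_j^r = q$ gives $zq\partial_q(r\lambda_j/z) = \lambda_j/z$, applying $zq\partial_q$ to the $j$-th summand $\sqrt{2\pi z}\,e^{r\lambda_j/z}\scrF_j(J)$ yields $(\lambda_j + zq\partial_q)\scrF_j(J)$. Property (2) follows from the one-step case of \eqref{eq:Delta_difference_eq}, $\Delta_V^\lambda/\Delta_V^{\lambda-z} = e_\lambda(V)$ (equivalent to \eqref{eq:RR_shift}), yielding $(\Delta_V^\lambda)^{-1}\cS J(\lambda) = (\Delta_V^{\lambda-z})^{-1} J(\lambda-z)$. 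The formal shift $\lambda \mapsto \lambda + z$ then multiplies the phase factor by $e^{z\log q/z} = q$, giving $\mathcal{I}(\cS J) = q\,\mathcal{I}(J)$.

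For (3) the main ingredient is a scaling identity for $\Delta_V^\lambda$: termwise differentiation of the Stirling expression \eqref{eq:modified_RR_expform}, using $(x\partial_x + z\partial_z)g(x,z) = x/z + 1/2$ for the single-summand $g(x,z) = (x\log x - x)/z + \tfrac{1}{2}\log x + \sum_{n\ge2}\tfrac{B_n}{n(n-1)}(z/x)^{n-1}$ and then substituting $x = \lambda+\delta$ and summing over Chern roots $\delta$ of $V$, gives
\[
(z\partial_z + \lambda\partial_\lambda + D_B)\log\Delta_V^\lambda = \frac{r\lambda + c_1(V)}{z} + \frac{r}{2},
\]
where $D_B$ denotes the degree operator $(\deg/2)$ on $H^*(B)$-valued coefficients. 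Combined with $(z\partial_z + \lambda\partial_\lambda)(\lambda\log q/z) = 0$, the formal integration-by-parts $\int\lambda\partial_\lambda f\,d\lambda = -\int f\,d\lambda$, and property (1), a short calculation yields
\[
\mathcal{I}\bigl((z\partial_z + \lambda\partial_\lambda)J\bigr) = \bigl(z\partial_z + rq\partial_q + z^{-1}c_1(V) + \tfrac{r-2}{2}\bigr)\mathcal{I}(J).
\]
Decomposing $O_V := z\partial_z - z^{-1}c_1^{S^1}(TV) + \mu_V + \tfrac{1}{2}$ via $\mu_V = \mu_B + \lambda\partial_\lambda - r/2$ and $c_1^{S^1}(TV) = c_1(TB) + c_1(V) + r\lambda$, and invoking (1) once more to convert $z^{-1}r\mathcal{I}(\lambda J)$ into $rq\partial_q\mathcal{I}(J)$, the various terms collapse cleanly to
\[
\mathcal{I}(O_V J) = \bigl(z\partial_z - z^{-1}c_1(TB) + \mu_B - \tfrac{1}{2}\bigr)\mathcal{I}(J).
\]
Isolating the $j$-th stationary-phase summand, the prefactor $\sqrt{2\pi z}\,e^{r\lambda_j/z}$ contributes $+\tfrac{1}{2}$ from $z\partial_z\sqrt{z}$ and $-r\lambda_j/z$ from $z\partial_z e^{r\lambda_j/z}$, producing exactly $\bigl(z\partial_z - z^{-1}(c_1(TB) + r\lambda_j) + \mu_B\bigr)\scrF_j(J)$, which is (3).

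\textbf{The main obstacle} is justifying the formal manipulations (integration by parts, shift of integration variable, derivative interchange) rigorously within the ring $\C[z,z^{-1}](\!(q^{-1/r'})\!)[\log q]$ in which $\scrF_j(J)$ lives. Each reduces to an elementary identity at the level of the Gaussian representation $\int e^{-rs^2/(2z)}L(s,\lambda_j)\,ds = \sqrt{2\pi z/r}\,[e^{z\partial_s^2/(2r)}L(s,\lambda_j)]_{s=0}$, but verifying the scaling identity for $\log\Delta_V^\lambda$ demands care with the $\log(\lambda+\delta)$ summands, which cancel only upon combining the $(\lambda\partial_\lambda)$ and $(z\partial_z)$ actions.
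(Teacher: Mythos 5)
Parts (1) and (2) of your proposal follow the paper's proof essentially verbatim: both reduce the claims to the formal properties of the Fourier integral and then push them through the stationary-phase expansion at the critical point $\lambda_j$ (with the paper being explicit about the induced change of the coordinate $s$). Your derivation of the scaling identity $(z\partial_z+\lambda\partial_\lambda+D_B)\log\Delta_V^\lambda = (r\lambda+c_1(V))/z + r/2$ for (3) is correct.

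For (3), however, you take a genuinely different route from the paper. The paper observes that $J\mapsto e^{c_1(V)\log\lambda_j/z}\scrF_j(J)$ is homogeneous of degree $-(r-1)$ and deduces the intertwining of $\mu_V$ with $\mu_B$ in one stroke, which is slicker but requires some care in interpreting the grading in the presence of $\log q$. Your proposal instead works directly with the Stirling scaling identity and integration by parts, which has the virtue of making all the numerical constants transparent. The final formula $\mathcal{I}(O_V J) = \bigl(z\partial_z - z^{-1}c_1(TB)+\mu_B-\tfrac{1}{2}\bigr)\mathcal{I}(J)$ and the subsequent isolation of the $j$-th summand are both correct, so the proof works.

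There is nonetheless an imprecision worth flagging in your intermediate step. As written, $\mathcal{I}\bigl((z\partial_z+\lambda\partial_\lambda)J\bigr) = \bigl(z\partial_z+rq\partial_q+z^{-1}c_1(V)+\tfrac{r-2}{2}\bigr)\mathcal{I}(J)$ is not an identity: the left side should include $D_B$ inside the operator, and the right side should include $D_B$ as well, i.e.\ the correct statement the integration by parts produces is $\mathcal{I}\bigl((z\partial_z+\lambda\partial_\lambda+D_B)J\bigr) = \bigl(z\partial_z+rq\partial_q+z^{-1}c_1(V)+D_B+\tfrac{r-2}{2}\bigr)\mathcal{I}(J)$. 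One cannot simply cancel $D_B$ from both sides because $\mathcal{I}(D_B J)\ne D_B\mathcal{I}(J)$: the kernel $\Phi=e^{\lambda\log q/z}(\Delta_V^\lambda)^{-1}$ has $H^*(B)$-valued (Chern-root) coefficients, so $\int (D_B\Phi) J\,d\lambda\ne 0$ in general (already visible at $J=1$). Your next step implicitly uses $\mathcal{I}(\mu_B J)=\mu_B\mathcal{I}(J)$, which is wrong by exactly the compensating $D_B$ term, so the two errors cancel and the final formula is right. A clean writeup should carry $D_B$ consistently (i.e.\ work with $D_B+\lambda\partial_\lambda+z\partial_z$ as a single unit), at which point the decomposition $\mu_V = D_B+\lambda\partial_\lambda - (\dim B + r)/2$ and $\mu_B = D_B - (\dim B)/2$ make the $D_B$'s pair off automatically.
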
 
\begin{proof} 
Parts (1), (2) follow by taking the stationary phase asymptotics of the standard properties of the Fourier transformation: 
\begin{align*} 
z q\partial_q \int e^{\lambda \log q/z} (\Delta_V^\lambda)^{-1} J(\lambda) d\lambda 
& = \int e^{\lambda \log q/z} (\Delta_V^\lambda)^{-1} \lambda J(\lambda) d\lambda \\ 
q \int e^{\lambda \log q/z} (\Delta_V^\lambda)^{-1} J(\lambda) d\lambda 
& = \int e^{(\lambda+z) \log q/z} (\Delta_V^\lambda)^{-1} J(\lambda) d\lambda \\
& = \int e^{\lambda \log q/z} (\Delta_V^\lambda)^{-1} \cS J (\lambda) d\lambda  
\end{align*} 
where we shift $\lambda$ as $\lambda \mapsto \lambda-z$ and used \eqref{eq:RR_shift} in the last line. We note that the shift $\lambda \mapsto \lambda -z$ translates into the change of variables 
\[
s \mapsto s + \sqrt{\lambda_j} \log\left(1- z \lambda_j^{-1} e^{-s/\sqrt{\lambda_j}} \right)
\]
in terms of the coordinate $s$ around the critical point (given by $\lambda = \lambda_j e^{s/\sqrt{\lambda_j}}$) and this makes sense in the ring $\C[z,z^{-1},s](\!(\lambda_j^{-1/2})\!)$. To show part (3), observe that the map $J \mapsto e^{c_1(V) \log \lambda_j/z} \scrF_j(J)$ is homogeneous of degree $-(r-1)$ with respect to the usual grading on $H^*_{S^1}(V)$ and $\deg q=2r$, $\deg z =2$. This means 
\[
\scrF_j((z\partial_z + \mu_V + \tfrac{1}{2})J)
=
(z\partial_z + r q\partial_q + \mu_B +  z^{-1} c_1(V) ) \scrF_j(J ).  
\]
Part (3) follows from this together with part (1), the fact that $\scrF_j$ is an $H^*(B)[z,z^{-1}]$-module homomorphism and $c_1^{S^1}(TV) = c_1(TB) + c_1(V) + r\lambda$.  
\end{proof} 

\subsection{Fourier transform of the $J$-function}
In this section we show that the Fourier transform $z\scrF_j(J_V(\tau))$ of the $J$-function of $V$ lies in the Givental cone of $B$. Roughly speaking, the argument goes as follows. The QRR theorem says that $z (\Delta_V^\lambda)^{-1} J_V(\tau)$ ``lies'' in $\cL_B$ modulo the convergence issue. The formal asymptotic expansion of the integral 
\begin{equation} 
\label{eq:Fourier_J}
\int e^{\lambda \log q/z} (\Delta_V^\lambda)^{-1} J_V(\tau) d\lambda
\end{equation} 
is given by applying the differential operator $\sqrt{\frac{2\pi z}{\varphi''(\lambda_j)}}e^{z (\partial_\lambda)^2/(2 \varphi''(\lambda_j))}$ to the integrand multiplied by $e^{\varphi''(\lambda_j)(\lambda-\lambda_j)^2/(2z)}$ and evaluating it at the critical point $\lambda=\lambda_j$. Hence it should lie in $\cL_B$ by a property of the cone (see Lemma \ref{lem:flow_on_the_cone}) up to powers of $z$. A more careful analysis shows the following: 
\begin{proposition} 
\label{prop:Fourier_J}
Write $\tau(\lambda) = \sum_{i,k} \tau^{i,k} \lambda^k \phi_i$ for the parameter in $H^*_{S^1}(V)$. Let   $M_B(\sigma)=M_B(\sigma;Q_B)$ denote the fundamental solution \eqref{eq:fundsol} of the quantum connection of $B$. 
There exist 
\begin{align*} 
\sigma_j&\in - \frac{2\pi\iu j}{r} c_1(V)+ \tau(\lambda_j) + \frakm H^*(B)[\![q^{-1/r}, q^{-c_1(V)/r}Q , q^{\bullet/r} \btau]\!] \\ 
\upsilon_j &\in \frac{1}{\sqrt{r}} \lambda_j^{-(r-1)/2} (1+ \frakm' H^*(B)[z][\![q^{-1/r},  q^{-c_1(V)/r}Q, q^{\bullet/r} \btau]\!])
\end{align*} 
with $\lambda_j = q^{1/r} e^{2\pi \iu j/r}$ such that $\scrF_j(J_V(\tau(\lambda))) =q^{-c_1(V)/(rz)}  M_B(\sigma_j;q^{-c_1(V)/r}Q)\upsilon_j$, where 
\begin{itemize} 
\item $q^{\bullet/r} \btau$ stands for the infinite set $\{q^{k/r} \tau^{i,k}\}$ of variables; 
\item $K[\![ q^{-c_1(V)/r} Q]\!]=\left\{\sum_{d\in \Eff(B)} c_d q^{-c_1(V)\cdot d/r} Q^d: c_d\in K\right\}$ for a module $K$; and
\item $\frakm$ $($resp.~$\frakm'$$)$ is the closed ideal of $\C[\![q^{-1/r}, q^{-c_1(V)/r}Q, q^{\bullet/r} \btau]\!]$ $($resp.~of $\C[z][\![q^{-1/r},q^{-c_1(V)/r}Q, q^{\bullet/r}\btau]\!]$$)$ generated by $q^{-1/r}$ and $q^{-c_1(V)\cdot d/r}Q^d$ with $d\in \Eff(B) \setminus \{0\}$.
\end{itemize}  
Moreover, $\sigma_j$ and $\upsilon_j$ are homogeneous of degree $2$ and $-(r-1)$ respectively with respect to the usual grading on $H^*(B)$ and the following degree of the variables: $\deg \tau^{i,k} = 2 - \deg \phi_i - 2k$, $\deg q=2r$, $\deg Q^d = 2 (c_1(TB)+c_1(V)) \cdot d$ and $\deg z=2$.  
\end{proposition}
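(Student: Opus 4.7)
The proof proceeds by reducing Proposition~\ref{prop:Fourier_J} to a cone-membership claim for the formal Fourier transform, carrying out the Gaussian step explicitly, and then reading off $\sigma_j,\upsilon_j$ from the resulting cone point.

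\emph{Reduction via QRR and substitution.} By Theorem~\ref{thm:QRR} applied to the $(V,\te_\lambda^{-1})$-twist (see Remark~\ref{rem:modified_Euler_twist}), the element $z(\tDelta_V^\lambda)^{-1}\tJ_V^\lambda(\tau)$ is a formal $\C[\![Q,\btau,\lambda^{-1}]\!]$-valued point of $\cL_B$. Substituting $\lambda=\lambda_je^{s/\sqrt{\lambda_j}}$ and introducing the rescaled variables $\tilde Q^d := q^{-c_1(V)\cdot d/r}Q^d$ and $\tilde\tau^{i,k} := q^{k/r}\tau^{i,k}$ to absorb the divergent factors of $\lambda_j$, the substituted expression is a well-defined formal series in $s$, $\mu:=\lambda_j^{-1/2}$, $\tilde Q$, $\tilde\btau$. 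Combining with the three scalar prefactors in the definition of $L(s,\lambda_j)$ --- whose exponents $-\varphi_{\ge 3}/z$, $-sc_1(V)\mu/z$ and $-(r/2-1)s\mu$ all vanish at $s=\mu=0$ --- Lemma~\ref{lem:flow_on_the_cone} shows that $zL(s,\lambda_j)$ is a $\C[\![s,\mu,\tilde Q,\tilde\btau]\!]$-valued point of $\cL_B$.

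\emph{The Gaussian step.} The central claim is that $[e^{z\partial_s^2/(2r)}zL(s,\lambda_j)]_{s=0}$ still lies on $\cL_B$. A direct analysis of $L$ shows that every $\partial_s$-derivative of $L$ at $s=0$ yields at least one factor of $\mu$ (since the genuine $s$-dependence enters through $s\mu$-combinations inside $K(\lambda_je^{s\mu})$ and through the quadratic term $\varphi_{\ge 3}=rs^2\tilde g(s\mu)$, whose coefficient $\tilde g(0)$ vanishes). Hence $e^{z\partial_s^2/(2r)}L|_{s=0}$ converges in the $\mu$-adic topology. To place the result on $\cL_B$, we apply Lemma~\ref{lem:miniversal} to $zL(s,\lambda_j)|_{\mu=\tilde Q=\tilde\btau=0}=z$ and write
\[
zL(s,\lambda_j) = \exp\bigl(\Phi(s;z\partial_\sigma,z)/z\bigr)\cdot zJ_B(\sigma_\star(s))
\]
for a differential operator $\Phi$ vanishing at $s=\mu=\tilde Q=\tilde\btau=0$ and a mirror map $\sigma_\star(s)$ with $\sigma_\star(0)=0$. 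Commuting $\partial_s$ through this representation and collecting terms order-by-order in the $\mu$-adic topology, we absorb the Gaussian action into a new differential operator $\tilde\Phi$ and mirror map $\tilde\sigma$ satisfying
\[
\bigl[e^{z\partial_s^2/(2r)}zL\bigr]_{s=0} = \exp(\tilde\Phi/z)\cdot zJ_B(\tilde\sigma),
\]
whereupon Lemma~\ref{lem:flow_on_the_cone} gives the desired cone-membership.

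\emph{Identification and obstacle.} Setting $\sigma_j:=\tilde\sigma-c_1(V)\log\lambda_j$ and $\upsilon_j:=(\lambda_j^{-(r-1)/2}/\sqrt{r})\exp(\tilde\Phi/z)\cdot 1$, the stated decomposition $\scrF_j(J_V(\tau(\lambda)))=M_B(\sigma_j)\upsilon_j$ follows. The leading-order forms of $\sigma_j,\upsilon_j$ at $\mu=\tilde Q=\tilde\btau=0$ are read off from the specialization $L\equiv 1$ (where the Gaussian acts trivially), and the homogeneity statements follow from Proposition~\ref{prop:scrF_properties}(3) together with the degree assignments on the rescaled variables. The principal technical obstacle is the Gaussian step: the operator $e^{z\partial_s^2/(2r)}$ carries an extra $1/z$ relative to the operators covered by Lemma~\ref{lem:flow_on_the_cone}, so absorbing it into a cone-preserving exponential requires a careful iterative construction of $\tilde\Phi,\tilde\sigma$ order-by-order in $\mu$ and $q^{-1/r}$, exploiting the $\mu$-adic convergence observed above.
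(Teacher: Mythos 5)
Your overall strategy matches the paper's: reduce via QRR to cone-membership of $z(\tDelta_V^\lambda)^{-1}\tJ$, rescale variables so that the substitution $\lambda=\lambda_je^{s/\sqrt{\lambda_j}}$ is well-defined adically, show cone-membership of $zL(s,\lambda_j)$ using the Dilaton/String/Divisor equations, apply the Gaussian, and extract $\sigma_j,\upsilon_j$ (with a final Divisor-equation conversion back to the $Q$ variables). The first and last steps are essentially the same as the paper's, up to being slightly terse about the intermediate ring $\C[\![\lambda^{-1},Q,\lambda^{\bullet}\btau]\!]$ and about where exactly the Divisor equation is invoked at the end.

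The divergence, and the genuine gap, is your treatment of the Gaussian step. You correctly observe the $\mu$-adic convergence coming from the fact that $s$ only ever appears in the combination $s\mu$ inside $K$ and $\varphi_{\ge 3}$, but then you take a detour: you invoke Lemma~\ref{lem:miniversal} to write $zL=\exp(\Phi/z)\cdot zJ_B(\sigma_\star(s))$ and then claim, without argument, that ``commuting $\partial_s$ through this representation and collecting terms'' lets you absorb $e^{z\partial_s^2/(2r)}$ into a new cone-preserving pair $(\tilde\Phi,\tilde\sigma)$. This is exactly the assertion you are trying to prove (that the Gaussian output is again a miniversal cone point), so as written the argument is circular rather than a proof. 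Moreover, $e^{z\partial_s^2/(2r)}$ does not commute with $\exp(\Phi/z)$ or with evaluation at $\sigma_\star(s)$, so the ``absorption'' would have to be carried out order-by-order with a nontrivial inductive bookkeeping that you have not supplied. You also misdiagnose the reason Lemma~\ref{lem:flow_on_the_cone} does not apply verbatim: the operator $(z\partial_s)^2/(2r)$ is of the allowed shape $F(x,z\partial_x,z)$ and does not carry an extra $1/z$; what fails is the hypothesis $F|_{y=Q=0}=0$, i.e.\ the constant operator $(z\partial_s)^2/(2r)$ does not vanish at the origin of the formal parameters. The paper resolves this directly: the overruled property makes $z^{-1}F\cdot(zL)$ a tangent vector to $\cL_B$ at $zL$, and the time-one flow $e^{F/z}(zL)$ is well-defined by the very $\mu$-adic convergence you established, so it stays on the cone (see the footnote to Lemma~\ref{lem:flow_on_the_cone}'s use in the proof). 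Replacing your ``absorption'' detour by this direct argument closes the gap and reproduces the paper's proof.
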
 
\begin{proof} 
We start with the $J$-function $J_V(\tau)$ with parameter $\tau=\sum_i \tau^i \phi_i$. By Remark \ref{rem:modified_Euler_twist}, $\tJ := J_V(\tau)|_{Q^d \to Q^d \lambda^{c_1(V)\cdot d}}$ is the $J$-function of the $(V,\te_\lambda^{-1})$-twisted theory. Thus $I:=z (\tDelta^\lambda_V)^{-1} \tJ$ is a $\C[\![\lambda^{-1},Q,\tau]\!]$-valued point of $\cL_B$ by the QRR Theorem \ref{thm:QRR}. By substituting $\sum_{k\ge 0} \tau^{i,k} \lambda^k$ for $\tau^i$, we regard it as a $\C[\![\lambda^{-1},Q,\lambda^\bullet \btau]\!]$-valued point on $\cL_B$, where $\lambda^\bullet \btau$ denotes the set $\{\lambda^k \tau^{i,k}\}$ of variables. By the definition of $\scrF_j(J_V(\tau(\lambda)))$ in the previous section, we have 
\begin{multline*} 
z e^{c_1(V) \log \lambda_j/z}\scrF_j(J_V(\tau(\lambda)))\Bigr|_{Q \to \lambda_j^{c_1(V)}Q } \\
= \frac{1}{\sqrt{r}} \lambda_j^{-(r-1)/2}  \left. e^{z (\partial_s)^2/(2r)}\left[ 
e^{-(\frac{r}{2}-1)u} e^{- u c_1(V) /z} e^{-\varphi_{\ge 3}(s,\lambda_j)/z} I 
\biggr |_{\scriptsize \!\!\!\begin{array}{l} Q \to Q e^{-u c_1(V)} \\ 
\lambda \to \lambda_j e^{u} \end{array}} \right] \right|_{s=0} 
\end{multline*} 
where we set $u= s/\sqrt{\lambda_j}$ as before and $Q \to  \lambda^{c_1(V)}Q$ means $Q^d \to \lambda^{c_1(V)\cdot d}Q^d$. The quantity in the big parenthesis $[\cdots]$ belongs to $H^*(B)[z,z^{-1},s][\![\lambda_j^{-1/2},Q,\lambda_j^\bullet \btau]\!]$ and defines a $\C[\![s,\lambda_j^{-1/2},Q,\lambda_j^\bullet \btau]\!]$-valued point of $\cL_B$: the Dilaton and String Equations show that $\cL_B$ is invariant under $e^{-(\frac{r}{2}-1)u} e^{-\varphi_{\ge 3}(s,\lambda_j)/z}$ and the Divisor Equation shows that $\cL_B$ is invariant under $f(Q) \to e^{-u c_1(V)/z} f(Q e^{-u c_1(V)})$. By Lemma \ref{lem:flow_on_the_cone}, the action of $e^{z (\partial_s)^2/(2r)}$ preserves points on the cone\footnote{Strictly speaking, the operator $(z\partial_s)^2/(2r)$ does not vanish at the origin of the formal parameters as required in Lemma \ref{lem:flow_on_the_cone}, but the conclusion holds since the action of $e^{z (\partial_s)^2/(2r)}$ is well-defined.}. Therefore we have $\tsigma_j \in H^*(B)[\![\lambda_j^{-1},Q,\lambda_j^\bullet\btau]\!]$ and $\tupsilon_j \in H^*(B)[z][\![\lambda_j^{-1},Q,\lambda_j^\bullet \btau]\!]$ such that $\tsigma_j\equiv \tau(\lambda_j)$, $\tupsilon_j \equiv 1$ modulo the closed ideal in $\C[\![\lambda_j^{-1},Q,\lambda_j^{\bullet} \btau]\!]$ or in $\C[z][\![\lambda_j^{-1},Q,\lambda_j^\bullet \btau]\!]$ generated by $\lambda_j^{-1}$ and $Q^d$ with $d\in \Eff(B)\setminus \{0\}$ and 
\[
e^{c_1(V) \log \lambda_j/z} \scrF_j(J_V(\tau(\lambda))) \Bigr|_{Q \to \lambda_j^{c_1(V)}Q} = \frac{1}{\sqrt{r}} \lambda_j^{-(r-1)/2} M_B(\tsigma_j) \tupsilon_j. 
\]
As in the proof of Theorem \ref{thm:Fourier_QDM}, $M_B(\tsigma_j)$ and $\tupsilon_j$ can be obtained from the Birkhoff factorization of the matrix formed by the derivatives of the left-hand side with respect to $\tau^{i,0}$. It follows from the homogeneity of $e^{c_1(V)\log \lambda_j/z} \scrF_j(J_V(\tau(\lambda)))$ that $M_B(\tsigma_j)$ and $\tupsilon_j$ are homogeneous of degree zero after the change $Q\to \lambda_j^{-c_1(V)} Q$ of variables. By the Divisor Equation again, we obtain $q^{c_1(V)/(rz)}\scrF_j(J_V(\tau(\lambda))) = M_B(\sigma_j;q^{-c_1(V)/r}Q) \upsilon_j$ with 
\[
\sigma_j = - \frac{2\pi \iu j}{r} c_1(V)  + \tsigma_j\bigr|_{Q\to \lambda_j^{-c_1(V)}Q}, \quad 
\upsilon_j =\frac{1}{\sqrt{r}} \lambda_j^{-(r-1)/2}  \tupsilon_j\bigr|_{Q\to \lambda_j^{-c_1(V)}Q}.
\]
The conclusion follows. 
\end{proof} 


\begin{example} 
Computing the formal stationary phase asymptotics of \eqref{eq:Fourier_J} at $Q=0$ over the Artinian local ring $H^*(B)$, we see that $r\lambda_j+\sigma_j|_{Q=0}-\frac{1}{r} c_1(V) \log q$ is given by the critical value of (cf.~\eqref{eq:modified_RR}) 
\[
f(\lambda) = \tau(\lambda) + \lambda \log q - \sum_{\delta:\text{Chern roots of $V$}} ((\lambda+\delta) \log (\lambda+ \delta) - (\lambda+\delta) ) 
\]
at a unique $H^*(B)$-valued critical point $\hlambda_j\in \lambda_j (1+\lambda_j^{-1}H^*(B)[\![\lambda_j^{-1},\lambda_j^\bullet \btau]\!])$, which is a solution $\lambda$ to the equation $q = e_\lambda(V) e^{-\tau'(\lambda)}$. Here $\hlambda_j$ is an eigenvalue of the quantum product $(p\star)|_{Q=0}$ in the $H^*(B)$-algebra $QH^*(\PP(V))|_{Q=0}$. (It is easy to see that $H^*(B)$ acts by cup product on $QH^*(\PP(V))|_{Q=0}$.) 
\end{example} 

\subsection{Projections induced by the Fourier transformations}
Let $\sigma_j$ be as in Proposition \ref{prop:Fourier_J}. We regard it as a map from $\Spf\C[\![q^{-1/r'},q^{-c_1(V)/r}Q, q^{\bullet/r}\btau]\!]$ to $\Spf \C[\![q^{-1/r'},q^{-c_1(V)/r}Q,\sigma]\!]$, where $r'$ is as in \eqref{eq:r'}, and consider the pull-back of the extended quantum $D$-module $\QDM(B)_{\rm ext}$ \eqref{eq:extended_QDM_B} of $B$ by $\sigma_j$. The pull-back is defined to be the module 
\[
\sigma_j^* \QDM(B)_{\rm ext} := H^*(B)[z][\![q^{-1/r'}, q^{-c_1(V)/r}Q, q^{\bullet/r} \btau]\!] 
\]
equipped with the pull-back $\sigma_j^* \nabla$ of the connection for $\QDM(B)_{\rm ext}$: 
\begin{align*} 
(\sigma_j^*\nabla)_{\tau^{i,k}} &= \partial_{\tau^{i,k}} + z^{-1} (\partial_{\tau^{i,k}} \sigma_j) \star_{\sigma_j} \\ 
(\sigma_j^*\nabla)_{q\partial_q} & = q\partial_q + z^{-1} (-(c_1(V)/r)\star_{\sigma_j} + (q \partial_q \sigma_j) \star_{\sigma_j}) \\ 
(\sigma_j^* \nabla)_{\xi Q\partial_Q} & = \xi Q\partial_Q + z^{-1} (\xi \star_{\sigma_j} 
+ (\xi Q \partial_Q \sigma_j)\star_{\sigma_j}) \qquad (\xi\in H^2(B)) \\ 
(\sigma_j^* \nabla)_{z\partial_z} & = z\partial_z - z^{-1} (E_B\star_{\sigma_j}) + \mu_B 
\end{align*} 
and the pairing $P_B$ as in \eqref{eq:P}. The Divisor Equation shows that these pull-backs are well-defined and define maps 
\[
q^{-k/r} (\sigma_j^*\nabla)_{\tau^{i,k}}, \ (\sigma_j^*\nabla)_{q\partial_q}, \ 
(\sigma_j^* \nabla)_{\xi Q \partial_Q}, \ (\sigma_j^*\nabla)_{z\partial_z} \colon \sigma_j^*\QDM(B)_{\rm ext} \to z^{-1} \sigma_j^* \QDM(B)_{\rm ext}. 
\]
The operator $(\sigma_j^*\nabla)_{\tau^{i,k}}$ has poles along the infinity divisor $(q^{-1/r'} = 0)$, but this is an apparent singularity: $\partial_{\tau^{i,k}}$ viewed as a vector field on $\Spf \C[\![q^{-1/r'},q^{-c_1(V)/r} Q, q^{\bullet/r} \btau]\!]$ has poles of the same order along $(q^{-1/r'}=0)$. 

Similarly to \eqref{eq:extended_QDM_B}, we denote by $\sigma_j^*\QDM(B)_{\rm ext,loc}$ the restriction of $\sigma_j^*\QDM(B)_{\rm ext}$ to the localized base $\Spf\C(\!(q^{-1/r'})\!)[\![Q,\btau]\!]$. 
\begin{align*} 
\sigma_j^*\QDM(B)_{\rm ext, loc} & := \sigma_j^*\QDM(B)_{\rm ext} \otimes_{\C[z][\![q^{-1/r'},q^{-c_1(V)/r}Q, q^{\bullet/r}\btau]\!]} \C[z](\!(q^{-1/r'})\!)[\![Q,\btau]\!] \\
&= H^*(B)[z](\!(q^{-1/r'})\!)[\![Q,\btau]\!]. 
\end{align*} 
\begin{proposition} 
\label{prop:scrF_Pi} 
The Fourier transformation $\scrF_j$ induces a map $\Pi_j \colon \QDM_{S^1}(V) \to \sigma_j^*\QDM(B)_{\rm ext, loc}$ of $\C[z][\![Q,\btau]\!]$-modules such that 
\begin{equation} 
\label{eq:scrF_Pi} 
\scrF_j(M_V(\tau) s) = q^{-c_1(V)/(rz)}M_B(\sigma_j; q^{-c_1(V)/r}Q) \Pi_j(s) 
\end{equation} 
for all $s\in \QDM_{S^1}(V)$. The map $\Pi_j$ satisfies the following: 
\begin{itemize} 
\item[(1)] $\Pi_j$ intertwines $\bS(\tau)$ with $q$; 
\item[(2)] $\Pi_j$ intertwines $\lambda$ with $\lambda_j +z(\sigma_j^*\nabla)_{q\partial_q}$; 
\item[(3)] $\Pi_j$ intertwines $\nabla_{\tau^{i,k}}$ with $(\sigma_j^*\nabla)_{\tau^{i,k}}$; 
\item[(4)] $\Pi_j$ intertwines $\nabla_{\xi Q\partial_Q}$ with $(\sigma_j^*\nabla)_{\xi Q\partial_Q}$ for $\xi \in H^2(B)$; 
\item[(5)] $\Pi_j$ intertwines $\nabla_{z\partial_z}+\frac{1}{2}$ with $(\sigma_j^*\nabla)_{z\partial_z}-r\lambda_j/z$; 
\item[(6)] $\Pi_j$ is homogeneous of degree $-(r-1)$ with respect to the usual grading on $H^*_{S^1}(V)$, $H^*(B)$ and the degree of the variables as in Proposition $\ref{prop:Fourier_J}$; 
\item[(7)] $\Pi_j(\phi_i \lambda^k) \in \frac{1}{\sqrt{r}} \lambda_j^{k-(r-1)/2} 
\left(\phi_i + \frakm H^*(B)[z][\![q^{-1/r},q^{-c_1(V)/r}Q, q^{\bullet/r}\btau ]\!] \right)$
\end{itemize} 
where $\lambda_j = e^{2\pi \iu j/r} q^{1/r}$ and $\frakm$ is as in Proposition $\ref{prop:Fourier_J}$. 
\end{proposition}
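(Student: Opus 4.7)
The plan is to define $\Pi_j$ by the defining relation \eqref{eq:scrF_Pi}. Setting $s=1$ and invoking Proposition \ref{prop:Fourier_J} identifies $\Pi_j(1) := \upsilon_j$. Differentiating $\scrF_j(M_V(\tau)\cdot 1) = M_B(\sigma_j)\upsilon_j$ in $\tau^{i,k}$, using the intertwining $\partial_{\tau^{i,k}}\circ M_V(\tau) = M_V(\tau)\circ\nabla_{\tau^{i,k}}$ together with its pulled-back version $\partial_{\tau^{i,k}}\circ M_B(\sigma_j) = M_B(\sigma_j)\circ(\sigma_j^*\nabla)_{\tau^{i,k}}$ (a consequence of $\partial_{\tau^l}M_B(\tau) = M_B(\tau)z^{-1}(\phi_l\star_\tau)$ and the chain rule), gives
\[
\Pi_j(\phi_i\lambda^k) \;=\; z(\sigma_j^*\nabla)_{\tau^{i,k}}\upsilon_j.
\]
Since $\{\phi_i\lambda^k\}_{0\le i\le s,\,k\ge 0}$ is a $\C[z][\![Q,\btau]\!]$-basis of $\QDM_{S^1}(V)$, I extend $\Pi_j$ to all of $\QDM_{S^1}(V)$ by $\C[z][\![Q,\btau]\!]$-linearity. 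The $\C[\lambda]$-linearity of $M_V(\tau)$, the $\C[z,z^{-1}][\![Q,\btau]\!]$-linearity of $\scrF_j$, and the commutation of $M_B(\sigma_j)$ with scalars then together imply \eqref{eq:scrF_Pi} for arbitrary $s$.

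Properties (1)--(5) are verified by the following uniform pattern. Apply \eqref{eq:scrF_Pi} with $s$ replaced by $\sharp\cdot s$; use the differential equations \eqref{eq:diffeq_M} and the shift-operator relations \eqref{eq:bS_properties} to convert $M_V(\tau)\circ\sharp$ into a `raw' operator acting on $M_V(\tau) s$; apply Proposition \ref{prop:scrF_properties} for $\sharp\in\{\lambda,\bS,z\partial_z+\tfrac12\}$, or direct commutativity with $\scrF_j$ for $\sharp\in\{\partial_{\tau^{i,k}},\xi Q\partial_Q+z^{-1}\xi\}$, to move the operator across the Fourier transform; finally conjugate back through $M_B(\sigma_j)$, using the pullback via $\sigma_j$ of the corresponding $M_B$-intertwining from \eqref{eq:diffeq_M}, to recover the $(\sigma_j^*\nabla)_\bullet$ form on the right. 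For (5), Proposition \ref{prop:scrF_properties}(3) produces $z\partial_z - z^{-1}(c_1(TB)+r\lambda_j)+\mu_B$ on the $B$-side, and the extra $-r\lambda_j$ term is precisely the source of the $-r\lambda_j/z$ shift in $(\sigma_j^*\nabla)_{z\partial_z}-r\lambda_j/z$. Properties (6) and (7) can be read off from the formula $\Pi_j(\phi_i\lambda^k) = (\partial_{\tau^{i,k}}\sigma_j)\star_{\sigma_j}\upsilon_j + z\partial_{\tau^{i,k}}\upsilon_j$: the homogeneity of $\sigma_j$ and $\upsilon_j$ from Proposition \ref{prop:Fourier_J} propagates, with overall degree shift $-(r-1)$ coming from the $\lambda_j^{-(r-1)/2}$ prefactor in $\upsilon_j$, and the leading behaviors $\upsilon_j \equiv \tfrac{1}{\sqrt r}\lambda_j^{-(r-1)/2}$, $\partial_{\tau^{i,k}}\sigma_j\equiv\phi_i\lambda_j^k$ modulo $\frakm$, combined with $\star_{\sigma_j}|_{Q=0} = \cup$, yield the claimed leading term $\tfrac{1}{\sqrt r}\lambda_j^{k-(r-1)/2}\phi_i$.

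The main obstacle is establishing that $\Pi_j(s)$ indeed lies in $\sigma_j^*\QDM(B)_{\rm loc} = H^*(B)[z](\!(q^{-1/r'})\!)[\![Q,\btau]\!]$, rather than in some larger ring. Proposition \ref{prop:Fourier_J} describes $\upsilon_j$ and $\sigma_j + c_1(V)\log\lambda_j$ as power series in the rescaled variables $q^{-1/r}$, $q^{-c_1(V)\cdot d/r}Q^d$, $q^{k/r}\tau^{i,k}$, so one must carefully control the $q$-powers after unrescaling. Using the homogeneity clause (with $\deg q=2r$, $\deg\tau^{i,k}=2-\deg\phi_i-2k$, $\deg Q^d = 2(c_1(TB)+c_1(V))\cdot d$, $\deg z=2$) together with the boundedness of cohomological degrees in $H^*(B)$, the coefficient of any fixed monomial $z^bQ^d\btau^I$ in $\upsilon_j$ and in $(\sigma_j^*\nabla)_{\tau^{i,k}}\upsilon_j$ is a \emph{finite} polynomial in $q^{\pm 1/r'}$; in particular it has only finitely many positive powers of $q^{1/r'}$, which is precisely what membership in $\C[z](\!(q^{-1/r'})\!)$ requires. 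The potentially negative values of $\deg\tau^{i,k}$ for large $k$ do not cause problems because each fixed monomial $\btau^I$ involves only finitely many of them.
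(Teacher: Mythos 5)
Your proof takes essentially the same route as the paper's: define $\Pi_j$ on the $\C[z][\![Q,\btau]\!]$-basis $\{\phi_i\lambda^k\}$ by $\Pi_j(\phi_i\lambda^k) = z(\sigma_j^*\nabla)_{\tau^{i,k}}\upsilon_j$ (obtained by differentiating $\scrF_j(J_V(\tau))=M_B(\sigma_j)\upsilon_j$ and using \eqref{eq:diffeq_M}), extend by linearity, derive (1)--(5) by pushing each relevant operator through $M_V$, then through $\scrF_j$ via Proposition~\ref{prop:scrF_properties}, then back through $M_B(\sigma_j)$, and read (6), (7) off the explicit formula $\Pi_j(\phi_i\lambda^k)=z\partial_{\tau^{i,k}}\upsilon_j+(\partial_{\tau^{i,k}}\sigma_j)\star_{\sigma_j}\upsilon_j$ together with the leading-order data in Proposition~\ref{prop:Fourier_J}. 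The one place you go beyond what the paper writes is in arguing carefully that $\Pi_j(s)$ actually lands in $\sigma_j^*\QDM(B)_{\rm loc}=H^*(B)[z](\!(q^{-1/r'})\!)[\![Q,\btau]\!]$ after unrescaling the variables $q^{-1/r}$, $q^{-c_1(V)/r}Q$, $q^{\bullet/r}\btau$; your homogeneity-plus-bounded-cohomological-degree argument is correct (the coefficient of a fixed $z^bQ^d\btau^I$ has forced total degree, so only finitely many $q^{\pm 1/r'}$-powers can appear), and this gives a slightly stronger conclusion (finite Laurent polynomial) than mere membership in $\C[z](\!(q^{-1/r'})\!)$. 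The paper leaves this verification implicit, so your addition is welcome; the rest is an accurate reproduction of the intended argument.

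Two minor remarks. First, setting $s=1$ to read off ``$\Pi_j(1)=\upsilon_j$'' and separately setting $(i,k)=(0,0)$ in the basis formula must give the same answer; they do, but only because the String Equation forces $\partial_{\tau^{0,0}}\sigma_j=1$ and $\partial_{\tau^{0,0}}\upsilon_j=0$, so $z(\sigma_j^*\nabla)_{\tau^{0,0}}\upsilon_j=\upsilon_j$. You could either note this consistency or simply define $\Pi_j$ directly on the basis as the paper does. Second, your claim that the coefficients are ``finite polynomials in $q^{\pm 1/r'}$'' is what your homogeneity argument proves, whereas the weaker statement that they are Laurent series in $q^{-1/r'}$ (bounded positive powers, possibly unbounded negative powers) is all that is needed and is what the power-series description in Proposition~\ref{prop:Fourier_J} directly gives upon unrescaling; both suffice, but the phrasing ``finite polynomial'' is a stronger claim that only the homogeneity argument supports, so it is worth being clear which mechanism you are invoking.
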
 
\begin{proof} 
By differentiating the identity $\scrF_j(M_V(\tau)1) =q^{-c_1(V)/(rz)} M_B(\sigma_j;q^{-c_1(V)/r}Q) \upsilon_j$ from Proposition \ref{prop:Fourier_J} by $\tau^{i,k}$ and using the differential equations \eqref{eq:diffeq_M} for $M_V$ and $M_B$, we obtain 
\[
\scrF_j(M_V(\tau) \phi_i \lambda^k) = q^{-c_1(V)/(rz)}M_B(\sigma_j; q^{-c_1(V)/r}Q)  z (\sigma_j^* \nabla)_{\tau^{i,k}} \upsilon_j. 
\] 
Therefore the $\C[z][\![Q,\btau]\!]$-module map $\Pi_j$ sending $\phi_i \lambda^k$ to $z(\sigma_j^* \nabla)_{\tau^{i,k}} \upsilon_j$ satisfies \eqref{eq:scrF_Pi}. Parts (1)-(5) follow immediately from \eqref{eq:scrF_Pi}, Proposition \ref{prop:scrF_properties} and the intertwining properties \eqref{eq:diffeq_M}, \eqref{eq:bS_properties} of $M_V, M_B$. Part (6) follows from the homogeneity of $\sigma_j$, $\upsilon_j$ in Proposition \ref{prop:Fourier_J}. Part (7) follows by a direct computation of 
\[
\Pi_j(\phi_i \lambda^k) = z (\sigma_j^* \nabla)_{\tau^{i,k}} \upsilon_j = z \parfrac{\upsilon_j}{\tau^{i,k}} + \parfrac{\sigma_j}{\tau^{i,k}} \star_{\sigma_j} \upsilon_j 
\]
using Proposition \ref{prop:Fourier_J}. 
\end{proof}

\subsection{End of the proof}
We finish the proof of Theorem \ref{thm:decomp_QDM}. We combine the isomorphism $\FT \colon \QDM_{S^1}(V) \cong \htau^*\QDM(\PP(V))$ in Theorem \ref{thm:Fourier_QDM} with the maps $\Pi_j$ in the above proposition to obtain a map
\[
\bigoplus_{j=0}^{r-1} \Pi_j  \circ \FT^{-1} \colon \htau^*\QDM(\PP(V)) \longrightarrow \bigoplus_{j=0}^{r-1} \sigma_j^* \QDM(B)_{\rm ext, loc}. 
\] 
By Theorem \ref{thm:Fourier_QDM} and Proposition \ref{prop:scrF_Pi}, 
this is a homomorphism of $\C[z,q][\![Q,\btau]\!]$-modules homogeneous of degree $-(r-1)$, and intertwines the connection $\htau^*\nabla$ with 
\[
\bigoplus_{j=0}^{r-1}\left( \sigma_j^*\nabla + \frac{\lambda_j}{z} \frac{dq}{q} - r\lambda_j \frac{dz}{z^2}\right). 
\]
 We note that the pull-back $(\sigma_j+r\lambda_j)^* \nabla$ of the quantum connection of $B$ by $\sigma_j + r\lambda_j$ is well-defined\footnote{Despite the fact that the pull-back $M_B(\sigma_j+r\lambda_j)$ of the fundamental solution is ill-defined in the $q^{-1/r'}$-adic topology.} and that 
\[
\sigma_j^*\nabla + \frac{\lambda_j}{z} \frac{dq}{q} - r\lambda_j \frac{dz}{z^2} 
= (\sigma_j+r\lambda_j)^*\nabla. 
\]
Here we use the fact that the shift $\sigma_j \to \sigma_j + r\lambda_j$ in the $H^0(B)$-direction does not change the quantum product but affects the Euler vector field \eqref{eq:Euler} and the covariant derivative in the $q$-direction (through $q\partial_q(r\lambda_j) = \lambda_j$). Hence we obtain a map 
\begin{equation} 
\label{eq:gluing} 
\bigoplus_{j=0}^{r-1} \Pi_j  \circ \FT^{-1} \colon \htau^*\QDM(\PP(V)) \longrightarrow \bigoplus_{j=0}^{r-1} (\sigma_j+r\lambda_j)^* \QDM(B)_{\rm ext, loc}
\end{equation} 
that is fully compatible with the connection. We now restrict the parameter $\tau$ to lie in $\sfH := \bigoplus_{k=0}^{r-1} H^*(B) \lambda^k \subset H^*_{S^1}(V)$: the mirror map $\tau \mapsto \htau(\tau)$ in Theorem \ref{thm:Fourier_QDM} restricted to $\sfH$ gives an isomorphism between $\sfH$ and $H^*(\PP(V))$ in the formal neighbourhood of $q=Q=0$ since $\htau(\tau)|_{q=Q=0} = \kappa(\tau)$. We compose the map $\sigma_j$ with the inverse $\htau \mapsto \tau=\tau(\htau)$ of this isomorphism and define 
\[
\varsigma_j(\htau) := \sigma_j(\tau(\htau)) + r \lambda_j.  
\]
Because of the homogeneity of $\htau(\tau)$ from Theorem \ref{thm:Fourier_QDM}(6), the inverse mirror map $\tau(\htau)$ belongs to $H^*(B)[q][\![Q,\htau]\!]$. Therefore the composition is well-defined;  $\varsigma_j(\htau)$ belongs to $H^*(B)(\!(q^{-1/r})\!)[\![Q,\htau]\!]$ and is homogeneous of degree $2$. Moreover, the asymptotics in Theorem \ref{thm:Fourier_QDM}(7) and Proposition \ref{prop:Fourier_J} imply that 
\begin{equation} 
\label{eq:varsigma_asymptotics} 
\varsigma_j(\htau)\Bigr|_{Q=0} \in r \lambda_j - \frac{2\pi \iu j}{r} c_1(V) + \alpha_j + \sum_{k=0}^{r-1} \sum_{i=0}^s \htau^{i,k} (\phi_i+\beta_{j,i,k}) \lambda_j^k +  (\htau)^2 H^*(B)
\end{equation} 
for some $\alpha_j, \beta_{j,i,k} \in q^{-1/r} H^*(B)[\![q^{-1/r}]\!]$, where $(\htau)\subset \C(\!(q^{-1/r})\!)[\![\htau]\!]$ denote the ideal generated by $\htau^{i,k}$ with $0\le i\le s$, $0\le k\le r-1$. 
Parts (4) and (5) of Theorem \ref{thm:decomp_QDM} follow from \eqref{eq:varsigma_asymptotics}. 
The above map induces a $\C[z,q][\![Q,\htau]\!]$-module map 
\[
\Phi =\bigoplus_{j=0}^{r-1} \Phi_j \colon \QDM(\PP(V)) \longrightarrow \bigoplus_{j=0}^{r-1} \varsigma_j^* \QDM(B)_{\rm ext, loc}  
\] 
compatible with the connection and homogeneous of degree $-(r-1)$. 
Combining Theorem \ref{thm:Fourier_QDM}(8) and Proposition \ref{prop:scrF_Pi}(7), we have 
\[
\Phi_j(\phi_i p^k)\Bigr|_{Q=\htau=0}\in \frac{1}{\sqrt{r}} \lambda_j^{k-(r-1)/2} \left(\phi_i + q^{-1/r} H^*(B)[z][\![q^{-1/r}]\!]\right)
\]
for $0\le k\le r-1$ and therefore $\Phi$ induces an isomorphism over $\C[z](\!(q^{-1/r'})\!)[\![Q,\htau]\!]$. This is part (6) of Theorem \ref{thm:decomp_QDM}. Inverting $\Phi|_{Q=\htau=0}$, we obtain 
\begin{equation} 
\label{eq:Phi_inverse_asymptotics}
\Phi^{-1}(\phi_i e_k)\Bigr|_{Q=\htau=0} \in \frac{1}{\sqrt{r}} \sum_{j=0}^{r-1} \lambda_k^{(r-1)/2-j}p^j \left(\phi_i +q^{-1/r} H^*(B)[z][\![q^{-1/r}]\!]\right) 
\end{equation} 
where $e_k$ is the vector $(0,\dots,0,1,0,\dots,0)$ (with $1$ in the $k$th position) in $\bigoplus_{j=0}^{r-1} H^*(B)$. 

It remains to show that $\Phi$ preserves the pairing. The asymptotics \eqref{eq:Phi_inverse_asymptotics} shows that 
\begin{align*} 
P_{\PP(V)}& (\Phi^{-1}(\phi_i e_k), \Phi^{-1}(\phi_h e_l))\Bigr|_{Q=\htau=0} \\
& = 
\frac{1}{r} \sum_{j+j'\ge r-1} \lambda_k^{(r-1)/2-j} \lambda_l^{(r-1)/2-j'} \left(\int_{\PP(V)} \phi_i\phi_hp^{j+j'} +O(q^{-1/r})\right) \\
& = P_B(\phi_i e_k, \phi_h e_l) + O(q^{-1/r}). 
\end{align*} 
Writing in the $\C$-bases $\{\phi_i p^k\}$, $\{\phi_ie_k\}$ of $H^*(\PP(V))$ and $\bigoplus_{j=0}^{r-1} H^*(B)$, we regard $P_1 =P_{\PP(V)}$, $P_2 = \bigoplus_{j=0}^{r-1} P_B$ and $\Phi$ as square matrices of size $r \dim H^*(B)$.  Here $P_1$ and $P_2$ are constant matrices and $\Phi$ has entries in $\C[z](\!(q^{-1/r'})\!)[\![Q,\htau]\!]$. The above computation shows that 
\[
P_2^{-1} \Phi^{-T} P_1 \Phi^{-1}|_{Q=\htau=0} = I + O(q^{-1/r})
\]
where $I$ is the identity matrix. Both of the pairings $P_2$ and $\Phi^{-T}P_1 \Phi^{-1}$ are compatible with the connection $\bigoplus_{j=0}^{r-1} \varsigma_j^*\nabla$ (see \eqref{eq:pairing_properties}) and hence $P_2^{-1} \Phi^{-T} P_1 \Phi^{-1}$ is flat with respect to the connection induced on the endomorphism bundle. By \eqref{eq:varsigma_asymptotics} we have 
\[
(\varsigma_j^*\nabla)_{q\partial_q} \Bigr|_{Q=\htau=0} = q\partial_q + \frac{1}{z} \left(\lambda_j - \frac{c_1(V)}{r} + q\partial_q \alpha_j\right) 
\]
where $c_1(V)$ and $q\partial_q \alpha_j \in q^{-1/r} H^*(B)[\![q^{-1/r}]\!]$ acts by cup product. Writing $x= q^{-1/r}$, we find that $v(x) := P_2^{-1}\Phi^{-T}P_1\Phi^{-1} |_{Q=\htau=0}= v_0+v_1 x+ v_2 x^2 +\cdots$ satisfies a differential equation of the form 
\[
(x\partial_x + x^{-1} S + (N_0+N_1 x+ \cdots)) v(x) = 0 
\]
where $S, N_i$ are mutually commuting endomorphisms on $\End(\bigoplus_{j=0}^{r-1} H^*(B))$ such that $S$ is semisimple and $N_0$ is nilpotent. We obtain a sequence of equations  
\[
\tag{$*_n$}
S v_{n+1} + (n +N_0)v_n + N_1v_{n-1}+  N_2 v_{n-2} + \cdots = 0. 
\]
From $S v_0 = N_i v_0= 0$ and ($*_0$) we have $S v_1=0$. Then ($*_1$) implies $S^2 v_2=0$. Since $S$ is semisimple, $Sv_2=0$ and ($*_1$) implies $(1+N_0) v_1 =0$; the nilpotence of $N_0$ implies $v_1=0$. In this way we can show inductively that $v_1=v_2=\cdots =0$. Therefore $P_2^{-1} \Phi^{-T} P_1 \Phi^{-1}|_{Q=\htau=0}=I$; in other words, 
\[
P_1^{-1} \Phi^T P_2 \Phi\Bigr |_{Q=\htau=0} = I. 
\]
We now use the fact that $P_1^{-1} \Phi^T P_2 \Phi$ is flat with respect to the connection on the endomorphism bundle of $\QDM(\PP(V))$. It satisfies a differential equation in $(Q,\htau)$ with at most logarithmic singularities at $Q=0$ and nilpotent residues. Expanding $P_1^{-1} \Phi^T P_2 \Phi$ in power series in $Q$ and $\htau$ and using a similar argument as before, we can easily show that $P_1^{-1}\Phi^T P_2 \Phi = I$ identically. Thus $\Phi$ preserves the pairing. 
\begin{remark} 
The map \eqref{eq:gluing} glues the connections $\htau^*\QDM(\PP(V))$ and $\bigoplus_{j=0}^{r-1} (\sigma_j+r\lambda_j)^*\QDM(B)_{\rm ext}$; $\htau^*\QDM(\PP(V))$ is defined around $q=Q= \btau=0$ and $(\sigma_j+r\lambda_j)^*\QDM(B)_{\rm ext}$ is defined around $q^{-1/r'} = q^{-c_1(V)/r}Q = q^{\bullet/r} \btau = 0$. They glue to a connection over a global ``K\"ahler moduli space'' obtained by adding the infinity divisor $(q=\infty)$ to $\Spf( \C[q][\![Q,\btau]\!])$. See Figure \ref{fig:decomposition}.  
\end{remark} 

\subsection{Semisimplicity} 
\label{subsec:semisimplicity} 
Since the decomposition $\Phi$ in Theorem \ref{thm:decomp_QDM} intertwines  $\nabla_{\htau^{i,k}}$ with $\bigoplus_{j=0}^{r-1} (\varsigma_j^*\nabla)_{\htau^{i,k}}$, it follows that 
\[
(\phi_i p^k)\star_{\htau} = \Phi^{-1} \left( \textstyle\bigoplus_{j=0}^{r-1} 
(\partial_{\htau^{i,k}} \varsigma_j) \star_{\varsigma_j(\htau)}\right) \Phi 
+ z \Phi^{-1} \partial_{\htau^{i,k}} \Phi.   
\]
Setting $z=0$, this shows that the map sending $\phi_i p^k$ to $\bigoplus_{j=0}^{r-1}\partial_{\htau^{i,k}} \varsigma_j$ gives a ring isomorphism $(H^*(\PP(V)),\star_{\htau}) \cong \bigoplus_{j=0}^{r-1} (H^*(B),\star_{\varsigma_j(\htau)})$ over $\C(\!(q^{-1/r})\!)[\![Q,\htau]\!]$. This proves Corollary \ref{cor:decomp_qcoh}. 

Corollary \ref{cor:decomp_qcoh} can be applied to show the semisimplicity of big quantum cohomology of certain partial flag varieties. Let $Fl(k_1,\dots,k_l;n)$ denote the partial flag variety consisting of flags $V_1\subset V_2 \subset \cdots \subset V_l \subset \C^n$ with $\dim V_i = k_i$. Fix a non-degenerate skew-symmetric 2-form $\omega$ on $\C^{2n}$. Let $IFl(k_1,\dots,k_l;2n)$ denote the isotropic partial flag variety consisting of flags $V_1\subset V_2\subset \cdots \subset V_l \subset \C^{2n}$ such that $\dim V_i = k_i$ and that $V_l$ is isotropic with respect to $\omega$. Here we necessarily have $k_l\le n$. The varieties $G(k,n) = Fl(k;n)$ or $IG(k,2n) = IFl(k;2n)$ are also known as ordinary or isotropic Grassmannians. We have the following result. 
\begin{proposition} 
\label{prop:semisimplicity} 
The partial flag varieties $Fl(k_1,\dots,k_l;n)$ and $IFl(k_1,\dots,k_l;2n)$ have generically semisimple big quantum cohomology. 
\end{proposition}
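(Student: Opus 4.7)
The strategy is to use Corollary \ref{cor:decomp_qcoh} iteratively, realizing each partial flag variety as the base of a tower of projective bundles whose summit is a variety of known generic semisimplicity. Throughout, whenever we invoke Corollary \ref{cor:decomp_qcoh} on a projective bundle $\PP(V)\to B$, we may arrange that $V^\vee$ is generated by global sections by tensoring $V$ with a sufficiently negative line bundle, which does not change $\PP(V)$; see the remark after Theorem \ref{thm:mirrorthm_introd}.

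For the type A statement, the key input is that the complete flag variety $\Fl(1,2,\dots,n-1;n)$ is itself an iterated projective bundle over $\pt$, since the forgetful map $\Fl(1,\dots,i;n)\to \Fl(1,\dots,i-1;n)$ identifies with $\PP(\C^n/V_{i-1})\cong \PP^{n-i}$. Iterating Corollary \ref{cor:decomp_qcoh} from the trivially semisimple $QH^*(\pt)=\C$ shows $QH^*(\Fl(1,\dots,n-1;n))$ is generically semisimple. For an arbitrary $\Fl(k_1,\dots,k_l;n)$ we then realize $\Fl(1,\dots,n-1;n)$ as the top of a projective-bundle tower with base $\Fl(k_1,\dots,k_l;n)$ by inserting the missing dimensions one at a time, each new $k'$ being adjacent (differing by $\pm1$) to an index already present or to the boundary $0$ or $n$. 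Such an insertion corresponds to a forgetful map whose fiber is a projective space---explicitly $\PP(V_{k_{j+1}}/V_{k_j})$ or its dual when $k'$ is inserted between $k_j$ and $k_{j+1}$, and $\PP(V_{k_1}^\vee)$ or $\PP(\C^n/V_{k_l})$ at the boundaries---so it is a genuine projective bundle. Transferring semisimplicity through this tower via Corollary \ref{cor:decomp_qcoh} gives the claim.

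The type C case follows the same blueprint, with $\PP^{2n-1}$ in place of $\pt$. Because every $1$-dimensional subspace of a symplectic vector space is automatically isotropic, the forgetful map $IFl(1,\dots,i;2n)\to IFl(1,\dots,i-1;2n)$ is the projective bundle $\PP(V_{i-1}^\perp/V_{i-1})\cong\PP^{2n-2i+1}$. Iterating Corollary \ref{cor:decomp_qcoh} from the generically semisimple $QH^*(\PP^{2n-1})=QH^*(\IG(1,2n))$ shows that the complete isotropic flag variety $IFl(1,\dots,n;2n)$ has generically semisimple quantum cohomology. For an arbitrary $IFl(k_1,\dots,k_l;2n)$ we again insert the missing indices of $\{1,\dots,n\}\setminus\{k_1,\dots,k_l\}$ adjacent to existing ones or to $0$; each insertion produces a projective bundle for the same reason as in type A, since the fiber is always a Grassmannian of a $1$- or codimension-$1$ subspace of some $V_{k_j}$, $V_{k_{j+1}}/V_{k_j}$, or isotropic symplectic quotient $V_{k_l}^\perp/V_{k_l}$.

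The main technical point is the purely combinatorial check that every subset of $\{1,\dots,n-1\}$ (respectively $\{1,\dots,n\}$) can be filled out to the full set by a sequence of such adjacency moves, so that each corresponding forgetful map is a projective bundle rather than merely a Grassmannian bundle---where Corollary \ref{cor:decomp_qcoh} would not directly apply. Once this ordering is in place, the rest is iteration of the Corollary, and no deeper obstacle arises.
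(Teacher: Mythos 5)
Your proof is correct and takes essentially the same approach as the paper: invoke Corollary \ref{cor:decomp_qcoh} and connect the given flag variety to a point by a chain of projective bundles, with the only (easy) combinatorial input being that the set of indices can be grown one adjacent element at a time. The paper presents exactly one example and says ``the general case is similar,'' whereas you spell out the general argument, including the well-chosen route through the complete (isotropic) flag variety and the identification of each forgetful-map fiber as a projective space; this is a fleshed-out version of the same argument, not a different one.
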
 
\begin{proof} 
In view of Corollary \ref{cor:decomp_qcoh}, it suffices to show that these spaces are connected to a point by a sequence of projective bundles. For example, $IG(3,6)= IFl(3;6)$ fits into the sequence:
\[
IFl(3;6) \xleftarrow{\PP^2} IFl(2,3;6) \xrightarrow{\PP^1} IFl(2;6) \xleftarrow{\PP^1} IFl(1,2;6) \xrightarrow{\PP^3} IFl(1;6) = \PP^5 \to \pt.  
\] 
The general case is similar. 
\end{proof} 

\begin{remark} 
It is expected that rational homogeneous varieties $G/P$ have semisimple big quantum cohomology. We refer the reader to \cite{Perrin-Smirnov} and references therein for recent developments on this question. 
\end{remark} 

\begin{figure}[t]
\centering 
\begin{tikzpicture}[x=1.2pt, y=1.2pt] 

\fill (-120,0) circle [radius=1.5];   
\fill (-180,0) circle [radius =1.5]; 
\fill (-135,26) circle [radius =1.5]; 
\fill (-165,26) circle [radius =1.5]; 
\fill (-135,-26) circle [radius =1.5]; 
\fill (-165,-26) circle [radius =1.5]; 

\draw (-150,-50) node {$Q=\htau=0$, $q>0$}; 

\fill[opacity=0.1] (30,0) circle [radius=10];   
\fill[opacity=0.1] (-30,0) circle [radius =10]; 
\fill[opacity=0.1] (15,26) circle [radius =10]; 
\fill[opacity=0.1] (-15,26) circle [radius =10]; 
\fill[opacity=0.1] (15,-26) circle [radius =10]; 
\fill[opacity=0.1] (-15,-26) circle [radius =10]; 

\filldraw (34,0) circle [radius =1]; 
\filldraw (28,3.4) circle [radius=1]; 
\filldraw (28,-3.4) circle [radius=1]; 
\filldraw (18.9,26.7) circle [radius=1]; 
\filldraw (12.4,29) circle [radius=1]; 
\filldraw (13.6,22.3) circle[radius=1]; 
\filldraw (-11.9,28.57) circle [radius=1]; 
\filldraw (-18.75, 27.36) circle [radius=1]; 
\filldraw (-14.3,22.06) circle [radius=1]; 
\filldraw (-28,3.46) circle [radius=1]; 
\filldraw (-34,0) circle [radius =1]; 
\filldraw (-28,-3.46) circle [radius =1]; 

\filldraw (18.9,-26.7) circle [radius=1]; 
\filldraw (12.4,-29) circle [radius=1]; 
\filldraw (13.6,-22.3) circle[radius=1]; 
\filldraw (-11.9,-28.57) circle [radius=1]; 
\filldraw (-18.75,-27.36) circle [radius=1]; 
\filldraw (-14.3,-22.06) circle [radius=1]; 

\draw (0,-50) node {small deformation of $(Q,\htau)$}; 

\end{tikzpicture} 
\caption{Decomposition of eigenvalues of the Euler multiplication of $\PP(V)$ into $r$ groups ($r=6$ in this picture).}  
\label{fig:Euler_eigenvalues} 
\end{figure}
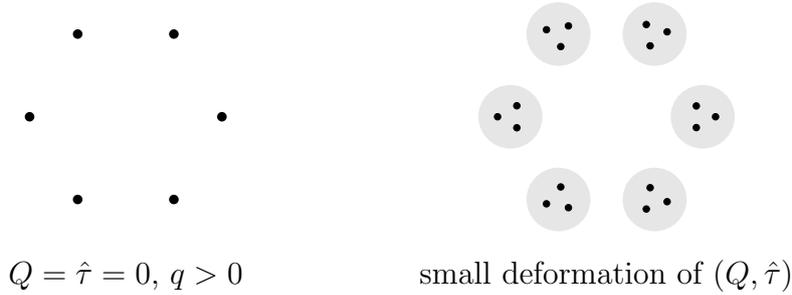 

\begin{remark} 
We can describe a decomposition of the eigenvalues of the quantum multiplication by the Euler vector field of $\PP(V)$ along $Q=\htau=0$. Because $z \nabla_{z\partial_z} = E_{\PP(V)}\star_{\htau} + O(z)$ for the quantum connection $\nabla$ of $\PP(V)$, Theorem \ref{thm:decomp_QDM} implies that the eigenvalues of $E_{\PP(V)}\star_{\htau}$ are the union of the eigenvalues of $E_B\star_{\varsigma_j(\htau)}$, $j=0,\dots,r-1$. 
When $Q=\htau=0$, the quantum product $\star_{\varsigma_j(\htau)}$ for $B$ equals the cup product and the eigenvalues of $E_B\star_{\varsigma_j(\htau)}$ are all equal to the $H^0$ component $r \lambda_j=r e^{2\pi\iu j/r} q^{1/r}$ of $E_B$ along $Q=\htau=0$ (see Theorem \ref{thm:decomp_QDM}(4)). See Figure \ref{fig:Euler_eigenvalues} for a typical decomposition of eigenvalues. Similar pictures have appeared in \cite[Figure 16]{Iritani:discrepant}, \cite[Figure 1]{Koto:convergence}. 
\end{remark} 

\subsection{Reconstruction method of Hinault-Yu-Zhang-Zhang}
\label{subsec:initial_condition}
The algorithm of Hinault-Yu-Zhang-Zhang \cite{HYZZ:framing} (see also \cite{Kontsevich:Miami2020,Kontsevich:Simons2021}) reconstructs the decomposition isomorphism $\Phi$ and the coordinate change $\htau \mapsto \varsigma_j(\htau)$ in Theorem \ref{thm:decomp_QDM} in terms of the genus-zero Gromov-Witten invariants of $B$, along with the initial condition along the locus $Q=\htau=0$. In this section, we describe the initial condition required for their reconstruction in terms of the Chern classes of the vector bundle $V\to B$. This provides, in particular, an algorithm to reconstruct the genus-zero Gromov-Witten invariants of $\PP(V)$ from those of $B$ and the Chern classes of $V$ (see Remark \ref{rem:calculation_GW_of_PV}). 

We describe the initial conditions for $\Phi$ and $\varsigma_j$ along $Q=\htau=0$. Recall that the isomorphism $\Phi=\bigoplus_{j=0}^{r-1} \Phi_j$ arises from the restriction of the isomorphism $\bigoplus_{j=0}^{r-1} \Pi_j \circ \FT^{-1}$ in \eqref{eq:gluing} to the subspace $\sfH = \bigoplus_{k=0}^{r-1} H^*(B)\lambda^k$. Since $\htau(\tau)|_{Q=\tau=0} = 0$ by Theorem \ref{thm:Fourier_QDM}(7), we have $\Phi_j|_{Q=\htau=0}=\Pi_j \circ \FT^{-1}|_{Q=\tau=0}$. We know by Theorem \ref{thm:Fourier_QDM}(8) that $\FT^{-1}(\phi_i p^k)|_{Q=\tau=0} = \phi_i\lambda^k$. Therefore we have 
\[
\Phi_j(\phi_i p^k)|_{Q=\htau=0} = \Pi_j(\phi_i \lambda^k)|_{Q=\tau=0} = e^{-\sigma_j|_{Q=\tau=0}/z} q^{c_1(V)/(rz)} \scrF_j(\phi_i \lambda^k)
\]
by Proposition \ref{prop:scrF_Pi}. Consider the case $\phi_i p^k =1$. Since $q^{c_1(V)/(rz)}\scrF_j(1)$ is of the form $\frac{1}{\sqrt{r}} \lambda_j^{-(r-1)/2} e^{2\pi\iu j c_1(V)/(rz)}(1+O(q^{-1/r}))$, its logarithm with respect to the cup product structure on $H^*(B)$ is well-defined. Because $\Pi_j(1)$ contains no negative powers of $z$, the initial value $\varsigma_j^\circ := \varsigma_j(\htau)|_{Q=\htau=0} = r \lambda_j + \sigma_j|_{Q=\tau=0}$ for $\varsigma_j$ can be calculated as 
\begin{equation} 
\label{eq:initial_cond_varsigma} 
\varsigma_j^\circ = r\lambda_j + [z^{-1}] \log\left( q^{c_1(V)/(rz)} \scrF_j(1) \right) 
\end{equation} 
where $[z^{-1}](\cdots)$ denotes the coefficient of $z^{-1}$. The initial value $\Phi_j^\circ := \Phi_j|_{Q=\htau=0}$ for $\Phi_j$ is then given by 
\begin{equation} 
\label{eq:initial_cond_Phi} 
\Phi_j^\circ(\phi_i p^k) = e^{-(\varsigma_j^\circ-r\lambda_j)/z} q^{c_1(V)/(rz)} \scrF_j(\phi_i \lambda^k) 
\end{equation} 
for $0\le i\le s$ and $0\le k \le r-1$. Note that $\scrF_j(\phi_i \lambda_k)$ can be calculated explicitly in terms of the Chern classes of $V$ via the procedure in \S\ref{subsec:stationary_phase}. 

We briefly describe the reconstruction of $\Phi$ and $\varsigma_j$ from their initial conditions \eqref{eq:initial_cond_varsigma}, \eqref{eq:initial_cond_Phi} by means of Birkhoff factorization. We leave the detailed verification to the reader, as the argument is completely parallel to the case of blowups treated in \cite[\S 5.8]{Iritani:monoidal}. We set: 
\begin{equation} 
\label{eq:s_j}
s_j(\htau) = \varsigma_j(\htau) - \varsigma_j^\circ.  
\end{equation} 
The formal change of variables $\htau \mapsto (s_0(\htau),\dots,s_{r-1}(\htau))$ between $H^*(\PP(V))$ and $H^*(B)^{\oplus r}$ is invertible over $\C(\!(q^{-1/r})\!)[\![Q]\!]$. Thus, we may treat $s_j = s_j(\htau)$, $j=0,\dots,r-1$ as independent variables instead of $\htau$. 
Introduce the block-diagonal endomorphism\footnote{This serves as a fundamental solution for $\QDM(B)_{\rm ext, loc}^{\boxplus r}$ with respect to the variables $(Q,s_0,\dots,s_{r-1})$ (but not for $q$ and $z$).} $M\in \End(H^*(B)^{\oplus r})[z^{-1}](\!(q^{-1/r'})\!)[\![Q,s_0,\dots,s_{r-1}]\!]$ as
\[
M = \bigoplus_{j=0}^{r-1} e^{-\varsigma_j^\circ/z} M_B(\varsigma_j^\circ+s_j; Q q^{-c_1(V)/r}).  
\]
This satisfies $M|_{Q=s_0=\dots=s_{r-1}=0} = \id$ and $M=\id +O(z^{-1})$. 
Then the composition $(\Phi^\circ)^{-1} \circ M$ admits a unique Birkhoff facorization of the form 
\[
(\Phi^\circ)^{-1} \circ M = M' \circ \Phi^{-1} 
\]
where $M'=\id +O(z^{-1})$ is a power series in $z^{-1}$ and $\Phi^{-1}$ is a power series in $z$ such that 
\begin{align*} 
M' &\in \End(H^*(\PP(V)))\otimes \C[z^{-1}](\!(q^{-1/r'})\!)[\![Q,s_0,\dots,s_{r-1}]\!],  \\ 
\Phi^{-1}& \in \Hom(H^*(B)^{\oplus r}, H^*(\PP(V)))\otimes \C[z](\!(q^{-1/r'})\!)[\![Q,s_0,\dots,s_{r-1}]\!],  
\end{align*} 
with $M'|_{Q=s_0=\cdots=s_{r-1}=0}=\id$ and $\Phi^{-1}|_{Q=s_0=\cdots =s_{r-1}=0} = (\Phi^\circ)^{-1}$. 
The inverse of $\Phi^{-1}$ gives the desired isomorphism $\Phi$ written in the coordinates $(s_0,\dots,s_{r-1})$. Furthermore, $M'$ equals  $(M_{\PP(V)}(\htau)|_{Q=\htau=0})^{-1} M_{\PP(V)}(\htau)$ in the coordinates $(s_0,\dots,s_{r-1})$. Finally, the change of coordinates $\htau=\htau(s_0,\dots,s_{r-1})$ inverse to \eqref{eq:s_j} is determined by the asymptotics $M' 1 = 1 + \htau(s_0,\dots,s_{r-1})/z + O(z^{-2})$.

\bibliographystyle{amsplain}
\bibliography{qcoh_projective_bundle}
\end{document}